\tikzstyle{vtx}=[inner sep=1pt,draw, shape=circle, font=\tiny]
\tikzstyle{line}=[inner sep=3pt,draw, shape=rectangle, line width = 3pt]
\tikzset{>=stealth}
\tikzstyle{lbl}=[inner sep = 1 pt, fill = white, midway]
\newcommand\blfootnote[1]{%
  \begingroup
  \renewcommand\thefootnote{}\footnote{#1}%
  \addtocounter{footnote}{-1}%
  \endgroup
}
\newcommand{\ZZ}{\mathbb{Z}}
\newcommand{\Aut}{\mathrm{Aut}}
\newcommand{\G}{\Gamma}
\newcommand{\cB}{\mathcal{B}}
\newcommand{\cR}{\mathcal{R}}
\newcommand{\cC}{\mathcal{C}}
\newcommand{\la}{\langle}
\newcommand{\ra}{\rangle}
\renewcommand{\t}{\tilde}
\newcommand{\WH}{\mathop{\rm WH}}
\newcommand{\lcm}{\mathop{\rm lcm}}
\newtheorem{theorem}{Theorem}[section]
\newtheorem{proposition}[theorem]{Proposition}
\newtheorem{lemma}[theorem]{Lemma}
\newtheorem{problem}[theorem]{Problem}
\newtheorem{observation}[theorem]{Observation}
\theoremstyle{definition}
\newtheorem{construction}[theorem]{Construction}
\begin{document}

\begin{center}
\Large{\textbf{Symmetries of the Woolly Hat graphs}} \\ [+4ex]
\end{center}

\begin{center}
Leah Wrenn Berman{\small $^{a}$},\   
Hiroki Koike{\small $^{b}$},\
El\'ias Moch\'an{\small $^{c}$},\ 
Alejandra Ramos-Rivera{\small $^{d}$},\
Primo\v z \v Sparl{\small $^{d,e,f,*}$}\ and 
Stephen E.~Wilson{\small $^{g}$}
\\

\medskip
{\it {\small
$^a$University of Alaska Fairbanks, Department of Mathematics and Statistics, Fairbanks, AK, USA\\
$^b$National Autonomous University of Mexico, Institute of Mathematics, Mexico City, Mexico\\
$^c$Northeastern University, Department of Mathematics, Boston, MA, USA\\
$^d$Institute of Mathematics, Physics and Mechanics, Ljubljana, Slovenia\\
$^e$University of Ljubljana, Faculty of Education, Ljubljana, Slovenia\\
$^f$University of Primorska, Institute Andrej Maru\v si\v c, Koper, Slovenia\\
$^g$Northern Arizona University, Department of Mathematics and Statistics, Flagstaff, AZ, USA
}}
\end{center}

\blfootnote{
Email addresses: 
lwberman@alaska.edu (L.~W.~Berman),
hiroki.koike@im.unam.mx (H.~Koike),
j.mochanquesnel@northeastern.edu (E.~Mochan),
alejandra.ramosrivera@fmf.uni-lj.si (A.~Ramos Rivera),
primoz.sparl@pef.uni-lj.si (P. \v Sparl),
stephen.wilson@nau.edu (S.~E.~Wilson)
\\
* - corresponding author
}


\hrule

\begin{abstract}
A graph is {\em edge-transitive} if the natural action of its automorphism group on its edge set is transitive. An automorphism of a graph is {\em semiregular} if all of the orbits of the subgroup generated by this automorphism have the same length. While the tetravalent edge-transitive graphs admitting a semiregular automorphism with only one orbit are easy to determine, those that admit a semiregular automorphism with two orbits took a considerable effort and were finally classified in 2012. Of the several possible different ``types'' of potential tetravalent edge-transitive graphs admitting a semiregular automorphism with three orbits, only one ``type'' has thus far received no attention. In this paper we focus on this class of graphs, which we call the {\em Woolly Hat } graphs. We prove that there are in fact no edge-transitive Woolly Hat graphs and classify the vertex-transitive ones. 
\end{abstract}
\hrule

\begin{quotation}
\noindent {\em \small Keywords: } edge-transitive; vertex-transitive; tricirculant; Woolly Hat graph
\end{quotation}

\section{Introduction}
\label{sec:Intro}

Investigation of tetravalent graphs admitting a large degree of symmetry has been an active topic of research attracting many researchers as is witnessed by numerous publications on the subject. Among such graphs, the {\em edge-transitive} ones (for which the automorphism group acts transitively on the edge set of the graph) have received by far the most attention. An edge-transitive tetravalent graph $\G$ can be {\em arc-transitive} (the automorphism group $\Aut(\G)$ acts transitively on the set of all {\em arcs}, that is ordered pairs of adjacent vertices). If, however, $\G$ is not arc-transitive (but still is edge-transitive), then it is either {\em half-arc-transitive} or {\em semisymmetric}, depending on whether $\Aut(\G)$ acts transitively on the vertex set $V(\G)$ of $\G$ or not, respectively (see for instance~\cite{PotWil14, Spa09}).

The tetravalent edge-transitive graphs of each of these three ``types'' have been thoroughly investigated, but these families of graphs are simply far too rich and diverse to admit a complete classification. There is thus still an abundance of questions to be answered and problems to be solved. In attacking such a wide-ranging topic, it makes sense to investigate and possibly classify certain subfamilies of these graphs. When attempting to solve any such problem or decide what kind of subfamilies to study, it is beneficial to have a large database of tetravalent edge-transitive graphs. With this aim in mind, Poto\v cnik and Wilson constructed a Census~\cite{PotWil20} of all known tetravalent edge-transitive graphs up to order 512 (known to be complete for the arc-transitive and half-arc-transitive graphs but possibly incomplete for the semisymmetric ones) where for each graph in the Census, various properties of the graph are given, together with several known ways on how to construct it. 

To be able to describe the graphs from the Census and more generally within the whole family of tetravalent edge-transitive graphs it is desirable to have a large list of known infinite families of such graphs. One very natural way of obtaining such families is to analyze and possibly classify the examples admitting a {\em semiregular} automorphism (an automorphism having all orbits of equal length) with few orbits. It is an easy exercise to classify all tetravalent edge-transitive graphs admitting an automorphism with just one orbit (such graphs are known as {\em circulants}). The situation becomes much more interesting for graphs admitting a semiregular automorphism with two orbits (these are called {\em bicirculants}). It has taken a considerable effort of several researchers in a handful of papers before the last step of the classification was obtained in~\cite{KovKuzMalWil12}. For instance, one of the major steps was the classification of edge-transitive Rose window graphs~\cite{KovKutMar10}, a family of graphs introduced in~\cite{Wil08} that have since been the object of investigation in various contexts (see for instance~\cite{HubRamSpa21}). 

While all of the examples in the case of circulants and bicirculants turn out to be arc-transitive, the situation gets even more interesting when one considers the tetravalent edge-transitive {\em tricirculants}, that is, graphs admitting a semiregular automorphism with three orbits. Here, for the first time, half-arc-transitive examples start to appear. In fact, the smallest half-arc-transitive graph, the Doyle-Holt graph (see~\cite{AlsMarNow94, Doy76, Hol81}) turns out to be a tricirculant. Moreover, infinite families of tetravalent half-arc-transitive tricirculants have been constructed (see for instance~\cite{AlsMarNow94}). 

To be able to explain why we focus on a particular family of tetravalent tricirculants in this paper we introduce a notion of a diagram and its simplifed version. We point out that our diagrams are essentially what are known as voltage graphs and the corresponding graphs are what are known as cyclic covers of the corresponding simplified diagrams (see for instance~\cite{GroTuc87, MalNedSko00}). 

A tetravalent tricirculant can succinctly be represented by a {\em diagram} in the following way. Let $\rho$ be a corresponding semiregular automorphism, let $n$ be its order, let $u$, $v$ and $w$ be representatives of the three orbits of $\la \rho \ra$ and let us denote these orbits by $\mathcal{U}$, $\mathcal{V}$ and $\mathcal{W}$, respectively. We represent each of these three orbits by a circle, and then for each pair of distinct $x,y \in \{u,v,w\}$ we join the circle representing the orbit $\mathcal{X}$ of $x$ to the circle representing the orbit $\mathcal{Y}$ of $y$ by $k$ edges, where $k$ is the number of neighbors of $x$ within $\mathcal{Y}$. We orient all of these edges in one of the two possible ways, say from $\mathcal{X}$ to $\mathcal{Y}$, and for each $a$, $0 \leq a < n$, such that $x$ is adjacent to $y{\rho^a}$, we put the label $a$ onto this set of edges (where we usually omit the label $0$). For each $x \in \{u,v,w\}$ we also put a semiedge, a loop, or a semiedge and a loop at $\mathcal{X}$, whenever $x$ has $1$, $2$ or $3$ neighbors within $\mathcal{X}$, respectively. Note that a semiedge will be present at the circle representing $\mathcal{X}$ if and only if $n$ is even and $x$ is adjacent to $x\rho^{n/2}$. On a loop we put the label $a$, where $0 \leq a < n/2$ is such that $x$ is adjacent to $x{\rho^{a}}$ (and $x{\rho^{-a}}$). To indicate that $\rho$ is of order $n$ we also put an $n$ in the middle of the diagram. An example of a diagram is given in the left part of Figure~\ref{fig:diagram} (where we in addition indicate the names of the vertices from each of the three orbits as chosen in Construction~\ref{con:WHAT}). The {\em simplified diagram} is obtained from the diagram by omitting $n$ and the labels and arrows on the edges.

Distinguishing cases depending on the maximal {\em indegree} of the three orbits of $\la \rho \ra$ (the valency of the subgraph induced on that orbit), it is easy to see that there are $9$ possible simplified diagrams for tetravalent tricirculants (depicted in Figure~\ref{fig:tric_diag}). Considering the number of $4$-cycles through each edge, it is not difficult to show that none of the diagrams with an orbit of indegree $3$ (the first three) can give rise to edge-transitive graphs. Similar considerations reveal that the simplified diagram~IV gives rise to a unique edge-transitive example (it is the graph of order $18$, named C4[18,2] in~\cite{PotWil20}, which in fact also corresponds to the simplified diagram~IX, of course for a different semiregular automorphism of order $6$). The simplified diagram~VIII takes a bit more work, but using the results of~\cite{PotWil07} one can show that this diagram also gives rise to a unique edge-transitive example, namely the so called wreath graph of order $6$ (or equivalently the complement of the complete multipartite graph $K_{2,2,2}$). 
\begin{figure}[!h]
\begin{center}
	\includegraphics[scale=0.5]{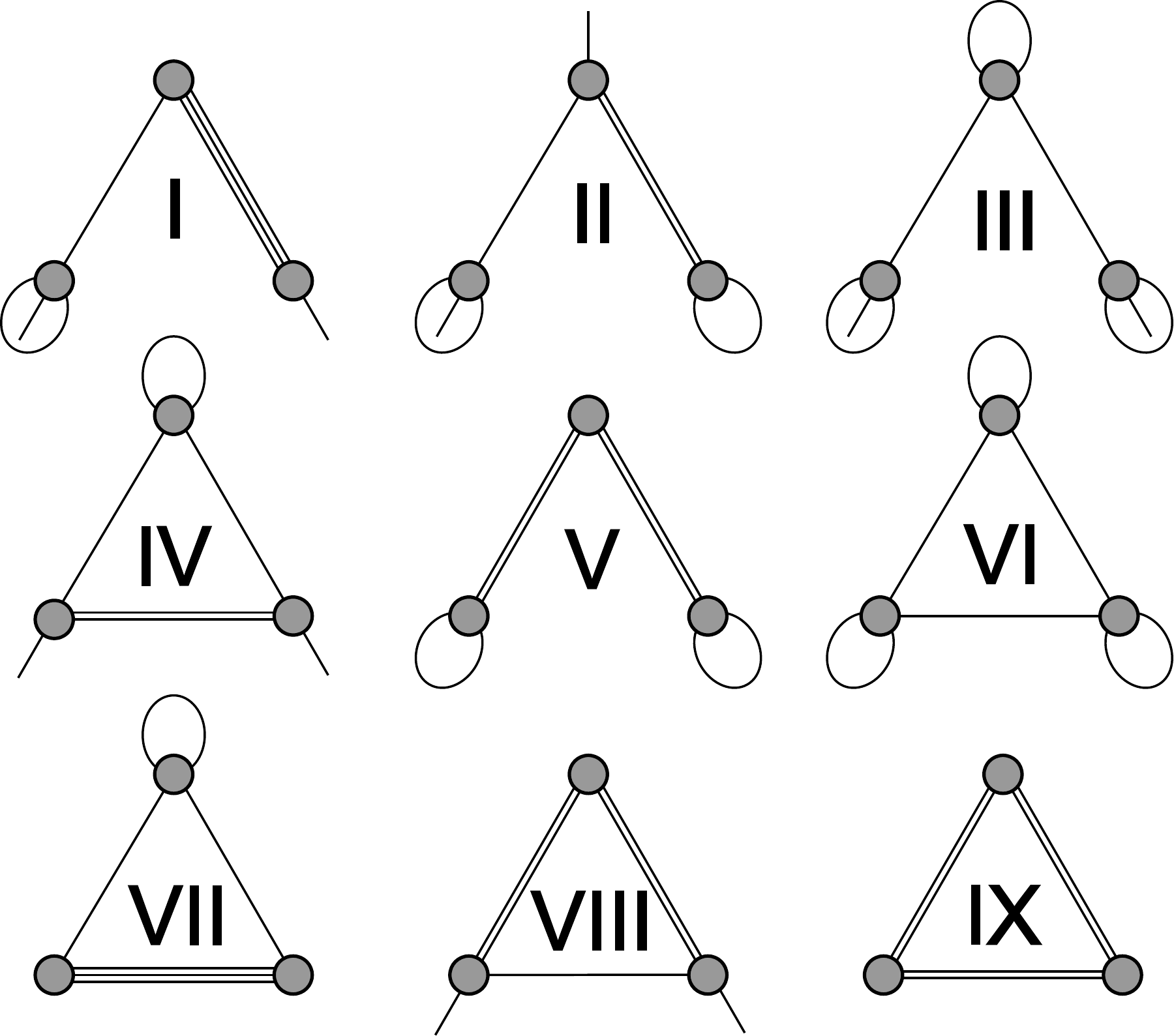}
	\caption{The nine simplified diagrams for tetravalent tricirculants.}
	\label{fig:tric_diag}
\end{center}
\end{figure}

The results of~\cite{Ste16} show that there are infinitely many edge-transitive examples for the simplified diagram~V, while those of~\cite{Mar98, MarSpa08, Spa09} show that there are infinitely many edge-transitive (in fact, infinitely many half-arc-transitive and also infinitely many arc-transitive) examples for each of the simplified diagrams~VI and~IX. 

This leaves us with the simplified diagram~VII. We call the corresponding diagram the {\em Woolly Hat diagram}. The corresponding graphs (see Section~\ref{sec:WHs}) have thus far not been investigated. It is the aim of this paper to investigate their symmetries. One of the main results of this paper (see Theorem~\ref{the:ET}) is that the Woolly Hat diagram yields no edge-transitive examples. 

When constructing semisymmetric tetravalent graphs, certain structures, called LR-structures (see~\cite{PotWil14, PotWil16, PotWil18}), can be very useful. The underlying graphs of these structures are tetravalent vertex-transitive graphs, which are not arc-transitive, but have some additional properties regarding their symmetries (see~\cite{PotWil14} for a definition). Inspecting the above mentioned Census~\cite{PotWil20}, we find out that some semisymmetric graphs from the Census arise via LR-structures corresponding to the Woolly Hat diagram. This motivates the second part of this paper, in which we give a complete classification of the vertex-transitive graphs corresponding to the Woolly Hat diagram (see Theorem~\ref{the:mainVT}).

\section{The family and basic properties}
\label{sec:WHs}

In this section we introduce the family of graphs corresponding to the Woolly Hat diagram and state some of their basic properties (but see also~\cite{PotWil20} where the graphs were first defined). Before doing this we would like to point out that throughout the paper $\ZZ_n$ will always denote the residue class ring modulo $n$, where $n$ is a positive integer.

\begin{construction}\label{con:WHAT}
For each integer $n \geq 3$, for each $a,b,c,d \in \ZZ_n$, where $2a \neq 0$, $b,c,d$ are pairwise distinct and $n$, $a$, $b$, $c$ and $d$ have no common prime divisor, the {\em Woolly Hat graph} $\WH_n(a,b,c,d)$ is the graph of order $3n$ with vertex set  
$$
	\{A_i \colon i \in \ZZ_n\} \cup \{B_i \colon i \in \ZZ_n\} \cup \{C_i \colon i \in \ZZ_n\}
$$
and in which for each $i \in \ZZ_n$ we have the following adjacencies
\begin{align*}
	A_i \sim\ & A_{i -a}, A_{i+a}, B_i, C_i\\
	B_i \sim\ & A_i, C_{i+b}, C_{i+c}, C_{i+d}\\
	C_i \sim\ & A_i, B_{i-b}, B_{i-c}, B_{i-d},\\
\end{align*}
where all of the subscripts are computed modulo $n$. The Wooly Hat diagram and a corresponding drawing of the graph $\WH_4(1,0,1,3)$ are given in Figure~\ref{fig:diagram}. 
\end{construction}

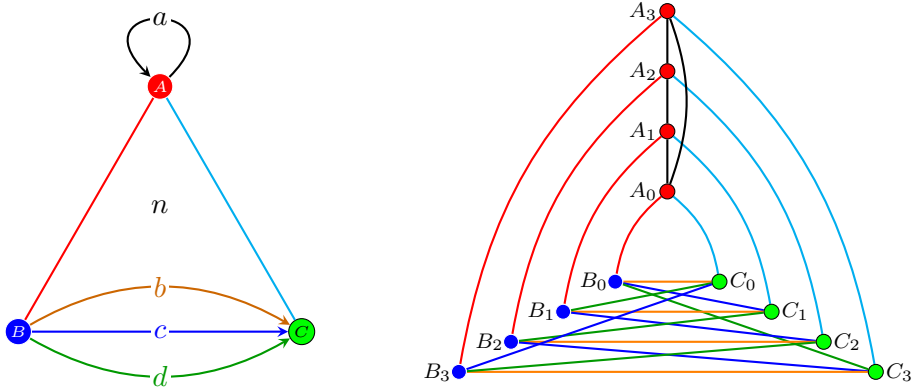
\begin{figure}[htbp]
\begin{center}
\subfigure
{
\begin{tikzpicture}[scale = 1.45]
\node[vtx, white, fill=red] (A) at (90: 1.5) {$A$};
\node[vtx, white, fill = blue] (B) at (90+120: 1.5) {$B$};
\node[vtx, , fill = green, ] (C) at (90+240: 1.5) {$C$};
\draw[thick, ->] (A) to[in = 90+45, out = 45,looseness = 3, distance = 1 cm] node[lbl]{$a$} 
(A);
\draw[thick, red] (A) -- (B);
\draw[thick, ->, orange!80!black] (B) to[bend left = 30] node[lbl] {$b$}
(C); 
\draw[->, thick, blue] (B) to[bend left = 0] node[lbl] {$c$}
(C); 
\draw[thick, ->, green!60!black] (B) to[bend left = -30] node[lbl] {$d$}
(C); 
\draw[thick, cyan] (A) -- (C);
\node[ below = 12mm of A] {$n$};
\end{tikzpicture}
}
\hspace{8mm}
\subfigure{
\begin{tikzpicture}[scale = .8]
\foreach \j in {0,1,2,3}{
 \pgfmathtruncatemacro{\rad}{\j+1};
\node[vtx, fill=red, inner sep = 2pt, label={[label distance=-3pt]left:{\scriptsize $A_{\j}$}}] (A\j) at (360*0/4+90:\rad) {};
\node[vtx, white, fill = blue, inner sep = 2pt, label={[label distance=-4pt]180:{\scriptsize $B_{\j}$}}]  (B\j) at (360*1/3+90: \rad){}; 
\node[vtx, , fill = green, inner sep = 2pt, label={[label distance=-3pt]0:{\scriptsize $C_{\j}$}}]  (C\j) at (360*2/3+90: \rad) {};
}
\begin{scope}[on background layer]
\foreach \j in {0,1,2,3}{
\draw[thick,red] (A\j) to[bend right = 20] (B\j);
\draw[thick, orange] let \n1 = {int(mod(\j+0, 4))} in (B\j) to[bend right = 0] (C\n1); 
\draw[thick, blue] let \n1 = {int(mod(\j+1, 4))} in(B\j) -- (C\n1); 
\draw[thick, green!60!black] let \n1 = {int(mod(\j+3, 4))} in(B\j) -- (C\n1); 
\draw[thick,cyan] (A\j) to[bend left = 20](C\j);
}
\foreach \j in {0,1,2}{
\draw[thick] let \n1 = {int(mod(\j+1, 4))} in (A\j) -- (A\n1);
}
\draw[thick] (A3) to[bend left = 20] (A0);
\end{scope}
\end{tikzpicture}
}
\caption{The Wooly Hat diagram and the graph $\WH_4(1, 0, 1, 3)$.}
\label{fig:diagram}
\end{center}
\end{figure}


Note that, by definition, the subscripts of the vertices of $\WH_n(a,b,c,d)$ are always computed modulo $n$. Note also that the assumptions that $2a \neq 0$ in $\ZZ_n$ and that $b,c$ and $d$ are pairwise distinct ensure that the graph is regular of valency $4$, while the assumption that $n$, $a$, $b$, $c$ and $d$ have no common prime divisor ensures that the graph is connected. 

Throughout the paper we abbreviate the term ``a Woolly Hat graph'' to {\em a WH-graph}. We also make an agreement that the vertices of a WH-graph of the form $A_i$, $B_i$ and $C_i$ are called the {\em $A$-vertices}, the {\em $B$-vertices} and the {\em $C$-vertices}, respectively. 

We first record some fairly obvious symmetries of the WH-graphs. The proof is straightforward and is left to the reader.

\begin{lemma}
\label{le:sym}
Let $\G = \WH_n(a,b,c,d)$ be a WH-graph. Then $\G$ admits the semiregular automorphism $\rho$, mapping according to the rule
\begin{equation}
\label{eq:rho}
A_i \rho = A_{i+1},\quad B_i\rho = B_{i+1},\quad C_i\rho = C_{i+1},\quad i \in \ZZ_n,
\end{equation}
and the automorphism $\tau$, mapping according to the rule
\begin{equation}
\label{eq:tau}
A_i \tau = A_{-i},\quad B_i\tau = C_{-i},\quad C_i\tau = B_{-i}, \quad i \in \ZZ_n.
\end{equation}
\end{lemma}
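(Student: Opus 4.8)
The statement to prove is Lemma~\ref{le:sym}, which asserts that the permutations $\rho$ and $\tau$ defined by equations~\eqref{eq:rho} and~\eqref{eq:tau} are automorphisms of $\G = \WH_n(a,b,c,d)$, with $\rho$ additionally being semiregular.

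The plan is to verify directly that each of $\rho$ and $\tau$ preserves the adjacency relations listed in Construction~\ref{con:WHAT}. Both maps are clearly bijections on the vertex set (each is induced by a bijection of $\ZZ_n$ together with a permutation of the three ``types'' $A,B,C$), so the only content is edge-preservation, and since the graph is finite and both maps are bijections on a finite set, it suffices to check that adjacent vertices are sent to adjacent vertices (the inverse direction is then automatic, or one can check it for $\tau$ directly since $\tau$ is an involution).

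For $\rho$: I would take each of the three families of adjacencies in turn and apply $\rho$ to both endpoints. For the $A$-row, $A_i \sim A_{i\pm a}$ becomes $A_{i+1} \sim A_{i+1\pm a}$, which is again of the required form; $A_i \sim B_i$ becomes $A_{i+1}\sim B_{i+1}$ and $A_i \sim C_i$ becomes $A_{i+1}\sim C_{i+1}$, both valid. For the $B$-row, $B_i \sim C_{i+b}$ becomes $B_{i+1}\sim C_{i+1+b} = C_{(i+1)+b}$, and similarly for the $c$ and $d$ edges; the $B_i\sim A_i$ edge is already covered. The $C$-row is analogous. Thus $\rho$ is an automorphism. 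That $\rho$ is semiregular is immediate: $\rho^k$ fixes $A_i$ iff $i+k \equiv i \pmod n$ iff $n \mid k$, and likewise on $B$- and $C$-vertices, so every orbit of $\langle\rho\rangle$ has length exactly $n$ (the orbits being precisely $\{A_i\}$, $\{B_i\}$, $\{C_i\}$).

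For $\tau$: again check the three rows. From $A_i \sim A_{i+a}$ we get $A_{-i}\sim A_{-i-a} = A_{-i + (-a)}$, and since $-a = i - (i+a)$ the pair $\{A_{-i}, A_{-(i+a)}\}$ has the form $\{A_j, A_{j-a}\}$ which is an edge; also $A_i\sim B_i$ maps to $A_{-i}\sim C_{-i}$, an edge, and $A_i \sim C_i$ maps to $A_{-i}\sim B_{-i}$, an edge. From $B_i \sim C_{i+b}$ we get $C_{-i}\sim B_{-(i+b)} = B_{-i-b}$; writing $j = -(i+b)$, this is the pair $\{C_{j+b}, B_j\}$ since $-i = j+b$, which is exactly an edge of the $C$-row ($C_k \sim B_{k-b}$ with $k = j+b$). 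The $c$- and $d$-edges transform the same way, and the $B$-row edges $B_i\sim C_{i+c}$, $B_i \sim C_{i+d}$, $B_i \sim A_i$ are handled symmetrically, as are the remaining $C$-row edges. Hence $\tau$ is an automorphism.

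There is essentially no obstacle here: the proof is a routine bookkeeping check, which is why the authors leave it to the reader. The only point requiring mild care is keeping track of the sign flip in the subscripts under $\tau$ together with the swap of $B$- and $C$-roles, and noting that the $B$-row and $C$-row adjacency lists in Construction~\ref{con:WHAT} are set up precisely so that $\tau$ interchanges them; a reader who writes $j = -(i+b)$ (or the analogous substitution) sees immediately that each $B$-to-$C$ edge with offset $b$ maps to a $C$-to-$B$ edge with offset $-b$, matching the displayed relations.
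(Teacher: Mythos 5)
Your proof is correct and is exactly the routine direct verification the paper has in mind; the authors simply state that the proof is straightforward and leave it to the reader. Your check of each adjacency family under $\rho$ and $\tau$, together with the orbit-length argument for semiregularity, fills in precisely what was omitted.
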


In Figure~\ref{fig:sym_draw} a symmetric drawing of the graph $\WH_4(1,0,1,3)$ from Figure~\ref{fig:diagram} is given in which the automorphism $\rho$ corresponds to a $4$-fold rotation while the automorphism $\tau$ corresponds to reflection over the dotted line. We remark that in Figures~\ref{fig:diagram} and \ref{fig:sym_draw}, as well as in Figures~\ref{fig:VT2} and~\ref{fig:sporadic} from Section~\ref{sec:VT}, the vertices and edges are colored in such a way that the vertices (or edges) from the same $\langle \rho \rangle$-orbit receive the same color. The sole purpose of this coloring is to make it easier to keep track of vertices and edges in these figures and should not be confused with the coloring of the edges we will be talking about in Section~\ref{sec:VT}.

\begin{figure}[htbp]
\begin{center}
\begin{tikzpicture}[scale = .8]
\foreach \j in {0,1,2,3}{
\pgfmathsetmacro{\aa}{360*\j/4-22.5}
\pgfmathsetmacro{\aaa}{360*\j/4+22.5}
\node[vtx, fill=red, inner sep = 2pt, label=360*\j/4:{\scriptsize $A_{\j}$}] (A\j) at (360*\j/4: 4) {};
\node[vtx, white, fill = blue,inner sep = 2pt, label={[label distance = -5pt]\aa:{\scriptsize $B_{\j}$}}] (B\j) at (360*\j/4-22.5: 2) {};
\node[vtx, , fill = green,inner sep = 2pt, label={[label distance = -5pt]\aaa:{\scriptsize $C_{\j}$}} ] (C\j) at (360*\j/4+ 22.5: 2) {};
}
\foreach \j in {0,1,2,3}{
\draw[thick] let \n1 = {int(mod(\j+1, 4))} in (A\j) to[bend right = 5] (A\n1);
\draw[thick,red] (A\j) -- (B\j);
\draw[thick, orange] let \n1 = {int(mod(\j+0, 4))} in (B\j) --  (C\n1); 
\draw[thick, blue] let \n1 = {int(mod(\j+1, 4))} in(B\j) -- (C\n1); 
\draw[thick, green!60!black] let \n1 = {int(mod(\j+3, 4))} in(B\j) -- (C\n1); 
\draw[thick,cyan] (A\j) -- (C\j);
}
\begin{scope}[on background layer]
\draw[dotted, shorten >= -.5cm, shorten <= -.5cm] (A0.0) -- (A2.180);
\end{scope}
\end{tikzpicture}
\caption{A symmetric drawing of the graph $\WH_4(1, 0, 1, 3)$.}
\label{fig:sym_draw}
\end{center}
\end{figure}

Note that $\WH_n(a,b,c,d) = \WH_n(-a,b,c,d)$ and that for $\G = \WH_n(a,b,c,d)$ and any $q \in \ZZ_n$, coprime to $n$, relabeling each $A_i$, $B_i$ and $C_i$ by $A_{qi}$, $B_{qi}$ and $C_{qi}$, respectively, yields $\WH_n(qa,qb,qc,qd)$. This shows that the following holds.

\begin{lemma}
\label{le:iso}
Let $\G = \WH_n(a,b,c,d)$ be a WH-graph and let $q \in \ZZ_n$ be coprime to $n$. Then $\G \cong \WH_n(qa,qb,qc,qd)$. In particular, $\G \cong \WH_n(a,-b,-c,-d)$.
\end{lemma}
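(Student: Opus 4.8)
The plan is to exhibit two explicit isomorphisms, both of which arise from relabeling the vertices of $\G = \WH_n(a,b,c,d)$ by an affine map on the index set $\ZZ_n$. First I would prove the general statement $\G \cong \WH_n(qa,qb,qc,qd)$ for $q$ coprime to $n$, and then obtain the ``in particular'' claim by specializing to $q = -1$ (which is always coprime to $n$) and using the already-noted identity $\WH_n(a,b,c,d) = \WH_n(-a,b,c,d)$ to replace $-a$ by $a$.

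For the general part, define the map $\varphi$ on vertices by $A_i \mapsto A_{qi}$, $B_i \mapsto B_{qi}$, $C_i \mapsto C_{qi}$ for all $i \in \ZZ_n$. Since $q$ is coprime to $n$, multiplication by $q$ is a permutation of $\ZZ_n$, so $\varphi$ is a bijection from $V(\G)$ to itself, and its image is precisely the vertex set of $\WH_n(qa,qb,qc,qd)$ written with the standard labels. It then remains to check that $\varphi$ carries the adjacency rules of $\WH_n(a,b,c,d)$ to those of $\WH_n(qa,qb,qc,qd)$. Concretely, one verifies for each of the three vertex types that an edge of $\G$ maps to an edge of the target graph: for instance $A_i \sim A_{i\pm a}$ becomes $A_{qi} \sim A_{qi \pm qa}$, which is exactly an $A$-$A$ edge of $\WH_n(qa,qb,qc,qd)$ since its $A$-$A$ adjacency parameter is $qa$; similarly $A_i \sim B_i$ and $A_i \sim C_i$ become $A_{qi}\sim B_{qi}$ and $A_{qi}\sim C_{qi}$, and $B_i \sim C_{i+b}$ (and the analogous edges with $c$, $d$) become $B_{qi} \sim C_{qi+qb}$, which is a $B$-$C$ edge of the target since $q(i+b) = qi + qb$. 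One should also check the two conditions $2(qa)\neq 0$ and $qb, qc, qd$ pairwise distinct and the gcd condition, which all follow immediately from the corresponding conditions on $(a,b,c,d)$ and the coprimality of $q$ with $n$; hence $\WH_n(qa,qb,qc,qd)$ is a legitimate WH-graph and $\varphi$ is a graph isomorphism.

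For the second assertion, take $q = -1$. The first part gives $\G \cong \WH_n(-a,-b,-c,-d)$, and since $\WH_n(-a,-b,-c,-d) = \WH_n(a,-b,-c,-d)$ by the observation that the $A$-$A$ parameter may be negated without changing the graph (the neighbors $A_{i-a}$ and $A_{i+a}$ are the same unordered pair whether we call the parameter $a$ or $-a$), we conclude $\G \cong \WH_n(a,-b,-c,-d)$.

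I do not expect any genuine obstacle here: the result is essentially bookkeeping, and the only points requiring a little care are (i) confirming that the reparametrized tuple still satisfies the defining constraints of Construction~\ref{con:WHAT} so that $\WH_n(qa,qb,qc,qd)$ is actually defined, and (ii) being careful about the direction of the $B$-$C$ edges, i.e.\ checking that the rule $C_i \sim B_{i-b}, B_{i-c}, B_{i-d}$ is the same relation as $B_j \sim C_{j+b}, C_{j+c}, C_{j+d}$ and is preserved by $\varphi$ for the same reason. Both are routine.
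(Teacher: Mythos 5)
Your proposal is correct and follows exactly the paper's argument: relabel each $A_i$, $B_i$, $C_i$ as $A_{qi}$, $B_{qi}$, $C_{qi}$ to get $\WH_n(qa,qb,qc,qd)$, then take $q=-1$ together with the identity $\WH_n(a,b,c,d)=\WH_n(-a,b,c,d)$ for the second claim. The extra checks you mention (the constraints of Construction~\ref{con:WHAT} and the consistency of the $B$--$C$ adjacency rule) are indeed routine and are left implicit in the paper.
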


Let $\G$ be a WH-graph and let $\rho \in \Aut(\G)$ be as in~\eqref{eq:rho}. The subgroup $\la \rho \ra$ of $\Aut(\G)$ then has six orbits in its action on the edge set of $\G$. We call the edges from the $\la \rho \ra$-orbit of $A_0A_a$ the {\em $a$-edges}, those from the orbit of $A_0B_0$ the {\em left edges}, those from the orbit of $A_0C_0$ the {\em right edges}, and for each $x \in \{b,c,d\}$ the edges from the orbit of $B_0C_x$ the {\em $x$-edges}.

We also point out that each WH-graph has certain $6$-cycles. These will play an important role in the determination of the automorphism group of the WH-graphs. For each $i \in \ZZ_n$ we clearly have the $6$-cycles (recall that $b, c$ and $d$ are pairwise distinct)
\begin{equation}
\label{eq:can6} 
(B_i, C_{i+b},B_{i+b-c},C_{i+b-c+d},B_{i-c+d},C_{i+d}).
\end{equation}
We call the $6$-cycles of this form the {\em canonical} $6$-cycles. Observe that these $6$-cycles are characterized by the fact that for any pair of antipodal edges on such a $6$-cycle there exists an $x \in \{b,c,d\}$ such that both of these edges are $x$-edges.

\section{Nonexistence of edge-transitive WH-graphs}
\label{sec:ET}

In this section we prove Theorem~\ref{the:ET} which states that there are no edge-transitive WH-graphs. One of the main tools in our proof is the following nice result of A.~Lucchini, which can be found in the book~\cite{Isaacs_book} (we point out that the result was discovered independently by Herzog and Kaplan, see~\cite{HerKap01}), and has been used successfully several times before in various classification results concerning symmetries of graphs (for instance in~\cite{AntHujKut15, FreKut13, KovKutMar10, KovKutMarWil12, KovMarMuz13} to name just a few).

\begin{proposition}\cite{HerKap01, Isaacs_book}
\label{pro:lucchini}
Let $G$ be a group containing a nontrivial cyclic subgroup $H$. If $|G| \leq |H|^2$, then $H$ contains a nontrivial subgroup which is normal in $G$. 
\end{proposition}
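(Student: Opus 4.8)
This statement is Lucchini's lemma (independently due to Herzog and Kaplan, see~\cite{HerKap01,Isaacs_book}), so the plan is to reconstruct a proof rather than invent a new one. First I would recast the conclusion in terms of the \emph{core} $H_G := \bigcap_{g\in G} H^g$, the largest normal subgroup of $G$ contained in $H$: the assertion ``$H$ contains a nontrivial subgroup normal in $G$'' is precisely ``$H_G \neq 1$''. So I would argue by contradiction, assuming $H_G = 1$, and aim to prove $|G| > |H|^2$. Writing $n = |H| \geq 2$ (recall $H$ is nontrivial) and $m = |G:H|$, the hypothesis $|G|\le|H|^2$ is exactly $m\le n$, and I would pass to the faithful transitive action of $G$ on the coset space $G/H$: it has degree $m$, and its point stabilisers are exactly the conjugates of $H$. (Since index-$2$ subgroups are normal, $H_G=1$ already forces $m\ge 3$.)

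The heart of the matter is to use that $H$ is \emph{cyclic}. For every divisor $e\mid n$, $H$ has a unique subgroup $H_e$ of order $e$, and if an element $x$ of order $e$ lies in some conjugate $H^g$ then $\langle x\rangle$ must be the unique order-$e$ subgroup of the cyclic group $H^g$, so $\langle x\rangle = (H_e)^g$. I would draw two conclusions. First, intersecting the conjugates of $H$ prime by prime shows that $H_G=1$ forces each $H_p$ ($p$ prime, $p\mid n$) to be non-normal in $G$; hence some conjugate $H^{g_p}$ contains a subgroup of order $p$ meeting $H$ trivially, so $|H\cap H^{g_p}|$ is prime to $p$ and $|\langle H,H^{g_p}\rangle|\ge |H\cdot H^{g_p}| = n^2/|H\cap H^{g_p}|$. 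Second, the set $D:=\bigcup_{g\in G}H^g$ of elements lying in some conjugate of $H$ is counted exactly by $|D| = \sum_{e\mid n}\varphi(e)\,|G:N_G(H_e)|$, since the elements of order $e$ in $D$ are precisely the generators of the (pairwise generator-disjoint) conjugates of $H_e$; and since $H_e$ is characteristic in $H$, each term is at most $|G:N_G(H)|\le m$, whence $|D|\le 1+m(n-1)$.

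Finally I would combine this enumeration with the classical fact (Jordan) that $G$ is never the union of the conjugates of a proper subgroup — in a quantitative form, e.g.\ that a transitive group of degree $m$ has at least $|G|/m$ fixed-point-free elements — to sandwich $|D|$ between bounds that become incompatible once $m\le n$; alternatively, one iterates the subgroup-growth estimate above over the primes dividing $n$ to force $|\langle H, H^{g_{p_1}},\dots,H^{g_{p_r}}\rangle|$ up to roughly $n^2$. The main obstacle — and where real care is needed — is that the naive Burnside/orbit count is tautological here, so one genuinely must feed in the uniqueness-of-subgroups property of cyclic groups to make these estimates strict; in particular the borderline configuration $|G|=|H|^2$ has to be excluded separately, via the structure forced on $G$ in that case (for instance, when $n$ is a prime power $G$ is a $p$-group, and when $n=p$ it has order $p^2$ hence is abelian, so that $H$ is normal there, contradicting $H_G=1$). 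For a clean write-up I would follow the argument given in~\cite{Isaacs_book}.
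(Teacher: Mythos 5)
The paper itself does not prove Proposition~\ref{pro:lucchini}; it is quoted from \cite{HerKap01, Isaacs_book} and used as a black box, so your sketch has to stand on its own --- and it does not, because the step that is supposed to produce the final contradiction is never actually carried out. Your preparatory observations are correct: the reformulation as $\mathrm{core}_G(H) \neq 1$; the fact that a trivial core forces, for each prime $p \mid n$, a conjugate $H^{g_p}$ with $|H \cap H^{g_p}|$ prime to $p$, whence $|G| \geq |HH^{g_p}| = n^2/|H \cap H^{g_p}|$ and so $m \geq n_p$ (the $p$-part of $n$); and the count $|D| \leq 1 + m(n-1)$ for $D = \bigcup_{g} H^g$. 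But all of these facts, together with Jordan's theorem or the Cameron--Cohen bound on fixed-point-free elements, constrain the same quantities in the same direction: they are upper bounds on $|D|$ and lower bounds on $m$ of size only $\max_p n_p$, and they are numerically consistent with $m \leq n$ whenever $n$ is not a prime power (for instance $n = m = 12$ violates none of them). You never produce a lower bound on $|D|$, nor an upper bound on $|G|$ beyond the hypothesis itself, so there is nothing to ``sandwich''. The proposed alternative of iterating the product estimate over the primes dividing $n$ also fails as stated: $|HH^{g_p}| = n^2/|H \cap H^{g_p}|$ does not compound, because there is no comparable lower bound for $|\langle H, H^{g_{p_1}}, \dots, H^{g_{p_r}}\rangle|$ --- the later conjugates may meet the subgroup already generated in a large subgroup. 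Even in the prime-power case, where your single-prime estimate does give $m \geq n$, the borderline configuration $|G| = |H|^2$ is settled only for $n = p$; for $n = p^k$ with $k \geq 2$ an argument is still required (it can be supplied, using that the subgroups of a cyclic $p$-group form a chain, but you do not supply it). For composite $n$, which is the heart of the theorem, the sketch contains no mechanism for a contradiction.

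This is a missing idea rather than a presentational gap: the known proofs are not counting arguments over the union of conjugates. In Isaacs' book the result is proved by induction on $|G|$ (first factoring out the core, exactly as you set up) and then by genuinely structural input, namely Zenkov's theorem that for abelian subgroups $A, B \leq G$ some intersection $A \cap B^g$ lies in the Fitting subgroup, combined with an analysis of a minimal normal subgroup; the short proof of Herzog and Kaplan in \cite{HerKap01} likewise does not proceed by enumerating $\bigcup_g H^g$. Your closing sentence, deferring the ``clean write-up'' to \cite{Isaacs_book}, in effect concedes the point: what you have is a correct reduction plus valid but insufficient estimates, not a proof.
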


We will be applying the above result in the setting where $G$ is the automorphism group of a WH-graph and $H$ is the cyclic subgroup generated by the semiregular automorphism $\rho$ from \eqref{eq:rho}. Before stating and proving the next proposition we recall the definition of a $2$-arc-transitive graph and define the notion of a quotient graph that we will be using in this paper. 

A $2$-arc of a graph is an ordered triple $(u,v,w)$ of its vertices such that $u \neq w$ and $v$ is adjacent to $u$ and $w$. A graph is {\em $2$-arc-transitive} if its automorphism group acts transitively on the set of its $2$-arcs. 
Let $\G$ be a graph and $K \leq \Aut(\G)$. The {\em quotient graph} $\G_K$ of $\G$ {\em with respect to} $K$ is the graph whose vertex set consists of the orbits of $K$ in its action on the vertex set of $\G$ and in which two distinct orbits $\mathcal{O}_1$ and $\mathcal{O}_2$ are adjacent whenever there are $u \in \mathcal{O}_1$ and $v \in \mathcal{O}_2$ which are adjacent in $\G$. Therefore, such a quotient graph is by definition simple (in the sense that there are no loops or multiple edges). 

\begin{proposition}
\label{pro:quotient}
Let $\G = \WH_n(a,b,c,d)$ be a WH-graph and let $\rho \in \Aut(\G)$ be as in \eqref{eq:rho}. Suppose $\G$ is edge-transitive and $K \leq \la \rho \ra$ is a nontrivial normal subgroup of $\Aut(\G)$. Then the quotient graph $\G_K$ of $\G$ with respect to $K$ is also an edge-transitive WH-graph. Moreover, if $\G$ is $2$-arc-transitive, then $\G_K$ is also $2$-arc-transitive.
\end{proposition}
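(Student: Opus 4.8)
The plan is to analyze the structure of the orbits of $K$ on $V(\G)$ and show that the quotient graph $\G_K$ is again a WH-graph to which edge-transitivity (and, where applicable, $2$-arc-transitivity) descends. Since $K \leq \la \rho \ra$ is cyclic, say of order $m$ dividing $n$, its orbits on $V(\G)$ are precisely the sets $\{A_{i+jm/1} : \dots\}$ — more carefully, writing $k = |K|$ and $\ell = n/k$, the orbit of $A_i$ under $K$ is $\{A_{i + tk} : t \in \ZZ_\ell\}$ and similarly for $B$- and $C$-vertices, because $K$ is generated by $\rho^\ell$. In particular $K$ is semiregular, it has exactly $3\ell$ orbits, and no orbit contains two vertices from different ``letter classes'' $\mathcal{A}, \mathcal{B}, \mathcal{C}$. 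I would denote the orbit containing $A_i$ by $\bar A_i$ (indices now in $\ZZ_\ell$), and similarly $\bar B_i$, $\bar C_i$. The first step is to read off the adjacencies in $\G_K$ directly from Construction~\ref{con:WHAT}, reducing all parameters modulo $\ell$: one gets $\bar A_i \sim \bar A_{i\pm\bar a}, \bar B_i, \bar C_i$ and $\bar B_i \sim \bar A_i, \bar C_{i+\bar b}, \bar C_{i+\bar c}, \bar C_{i+\bar d}$ (with the analogous relations for $\bar C_i$), where $\bar a, \bar b, \bar c, \bar d$ are the images of $a,b,c,d$ in $\ZZ_\ell$.

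The second step is to check that $\G_K = \WH_\ell(\bar a, \bar b, \bar c, \bar d)$ is a genuine WH-graph, i.e. that it satisfies the hypotheses of Construction~\ref{con:WHAT}: $\ell \geq 3$, $2\bar a \neq 0$ in $\ZZ_\ell$, the elements $\bar b, \bar c, \bar d$ are pairwise distinct in $\ZZ_\ell$, and $\ell, \bar a, \bar b, \bar c, \bar d$ have no common prime divisor. These do not follow automatically from the corresponding facts in $\ZZ_n$ — reducing modulo $\ell$ could collapse distinct elements or kill $2\bar a$ — so here is where edge-transitivity of $\G$ must be used. I expect this to be the main obstacle. The idea is that if, say, $\bar b = \bar c$, then in $\G$ the $b$-edge $B_0C_b$ and the $c$-edge $B_0C_c$ would project to the same edge of $\G_K$ and, crucially, $\G$ would contain a $4$-cycle $B_0, C_b, B_{c-b}\cdot(\dots)$ — more precisely $\ell \mid (b-c)$ together with the adjacency pattern forces short cycles through the $b$-, $c$- or $d$-edges that do not exist through, say, a left edge $A_0B_0$; since edge-transitivity makes all edges equivalent with respect to the number of $4$-cycles (or $6$-cycles, using the canonical $6$-cycles of~\eqref{eq:can6}) through them, this is a contradiction. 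A parallel counting argument handles $2\bar a = 0$ (this would create a multi-edge or a short cycle on the $\mathcal A$-orbit not matched elsewhere) and handles the coprimality condition (a common prime divisor would disconnect $\G_K$, but a quotient of a connected edge-transitive graph by a normal subgroup is connected, or alternatively one sees that $\G$ itself would then have been disconnected). I would organize this as a short lemma: "in an edge-transitive WH-graph, the number of $4$-cycles through an edge, resp. the number of $6$-cycles through an edge, is the same for all edges, and this pins down which of the parameters can be congruent mod $\ell$."

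The third step is the transfer of symmetry. Since $K \trianglelefteq \Aut(\G)$, the group $\Aut(\G)$ acts on the orbit set $V(\G_K)$, giving a homomorphism $\Aut(\G) \to \Aut(\G_K)$; its image $\overline{\Aut(\G)}$ acts on $\G_K$. Edge-transitivity of $\G$ implies $\overline{\Aut(\G)}$ is transitive on $E(\G_K)$, because every edge of $\G_K$ is the image of an edge of $\G$ and $\Aut(\G)$ permutes the latter transitively; hence $\G_K$ is edge-transitive. For the $2$-arc-transitive clause I would use the standard fact that when $K$ is semiregular and the girth of $\G$ is at least $\dots$ — more simply, a $2$-arc $(\bar A_i, \bar B_j, \bar C_k)$ of $\G_K$ lifts to a $2$-arc of $\G$: pick $u$ in the first orbit, then a neighbour $v$ of $u$ in the second orbit (exists by definition of $\G_K$), then a neighbour $w \neq u$ of $v$ in the third orbit (exists because distinct orbits and valency $4$ leave room; the only thing to rule out is that $v$'s unique neighbour in the third orbit equals a $K$-translate issue, which again uses semiregularity and that the relevant parameters are distinct mod $\ell$, already secured in step two). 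Then $2$-arc-transitivity of $\G$ moves any such lifted $2$-arc to any other, and projecting down gives $2$-arc-transitivity of $\G_K$. Finally I would note that $\G_K$ is nontrivial in the required sense because $K$ is a proper subgroup of $\Aut(\G)$ — indeed $\la\rho\ra$ already has three orbits, so $\ell \geq 1$ and in fact $\ell \geq 3$ was secured above, so $\G_K$ has at least $9$ vertices and is a bona fide WH-graph rather than a degenerate object.
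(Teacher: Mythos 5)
Your overall architecture (identify the $K$-orbits, read off the quotient parameters modulo $\ell = n/|K|$, check the Construction~\ref{con:WHAT} hypotheses for $\bar a,\bar b,\bar c,\bar d$, then push the symmetry down through the homomorphism $\Aut(\G)\to\Aut(\G_K)$) is sound, and your third step is essentially what the paper does. The genuine gap is in your second step, which you yourself flag as the main obstacle: the cycle-counting mechanism you propose does not work. If, say, $\ell \mid (b-c)$, this is a congruence modulo $\ell$, not modulo $n$; it creates coincidences in the quotient $\G_K$, but it does \emph{not} force any $4$-cycle or $6$-cycle in $\G$ itself (for instance, $C_b$ and $C_c$ have a common neighbour other than $B_0$ only under conditions like $2(b-c)=0$ or $b-c=c-d$ in $\ZZ_n$, none of which follow from $b\equiv c \pmod{\ell}$). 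So comparing numbers of short cycles through edges of $\G$, which edge-transitivity does make constant, cannot detect which parameters collapse modulo $\ell$, and your argument for $2\bar a\neq 0$ and for $\bar b,\bar c,\bar d$ pairwise distinct never gets off the ground.

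The missing idea is to exploit normality of $K$ directly: since $K\trianglelefteq\Aut(\G)$, its orbits are blocks permuted by $\Aut(\G)$, so for an edge $uv$ the number of neighbours of $u$ lying in the $K$-orbit of $v$ is an invariant of the $\Aut(\G)$-orbit of the edge (up to swapping endpoints). Now evaluate this on the edge $A_0B_0$: the neighbours of $A_0$ are $A_{\pm a}, B_0, C_0$, so $B_0$ is the \emph{unique} neighbour of $A_0$ in the $K$-orbit of $B_0$, and symmetrically $A_0$ is the unique neighbour of $B_0$ in the $K$-orbit of $A_0$. Edge-transitivity then forces this local multiplicity to be $1$ for every edge, and also rules out edges inside a $K$-orbit (one such edge would force all edges to be intra-orbit, contradicting connectedness). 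Hence every vertex has its four neighbours in four distinct $K$-orbits, which simultaneously gives $a\not\equiv 0$, $2a\not\equiv 0$, and $b,c,d$ pairwise distinct modulo $\ell$ (and $\ell\geq 3$ follows from $2\bar a\neq 0$); connectivity of the quotient gives the coprimality condition, and since $K\leq\la\rho\ra$ the quotient is $\WH_\ell(\bar a,\bar b,\bar c,\bar d)$. This is exactly the paper's (short) argument, and with it your steps one and three go through as you describe.
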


\begin{proof}
Since $K$ is normal in $\Aut(\G)$, its orbits are blocks of imprimitivity for $\Aut(\G)$, and so edge-transitivity implies that there are no edges of $\G$ joining two vertices from the same $K$-orbit. Moreover, as $B_0$ is clearly the only neighbor of $A_0$ in the $K$-orbit of $B_0$, edge-transitivity also implies that for each vertex of $\G$ its four neighbors are in four different $K$-orbits. Therefore, the quotient graph $\G_K$ is regular of valency $4$. Since $K \leq \la \rho \ra$ it is now clear that $\G_K$ is a WH-graph. Moreover, the induced action of $\Aut(\G)$ on $\G_K$ is edge-transitive, and if $\Aut(\G)$ acts $2$-arc-transitively on $\G$, then so does this induced action of $\Aut(\G)$ on $\G_K$. 
\end{proof}

Our proof that there are no edge-transitive WH-graphs is carried out in two steps. We first show that there are no $2$-arc-transitive WH-graphs and then show that there are also no edge-transitive ones. But first we make a useful observation.

\begin{lemma}
\label{le:girth3}
There are no edge-transitive WH-graphs of girth $3$. Consequently, if the graph  $\WH_n(a,b,c,d)$ is edge-transitive, then $b$, $c$ and $d$ are all nonzero. 
\end{lemma}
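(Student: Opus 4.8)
The plan is to prove the two assertions in turn, deriving the second as a quick consequence of the first once we understand which configurations produce triangles. First I would enumerate the possible $3$-cycles in $\G = \WH_n(a,b,c,d)$. Since every vertex has exactly two neighbors of its own ``type'' only in the case of the $A$-vertices (an $A_i$ is adjacent to $A_{i\pm a}$), and the $B$- and $C$-vertices have no neighbors of their own type, a triangle must involve vertices of at least two types. Running through the adjacency rules of Construction~\ref{con:WHAT}, a triangle either (i) lies entirely among $A$-vertices, which forces $3a = 0$ in $\ZZ_n$ together with $a \neq \pm a$; (ii) uses the edge $A_iB_i$ together with a common neighbor of $A_i$ and $B_i$: the neighbors of $A_i$ are $A_{i\pm a}, B_i, C_i$ and the neighbors of $B_i$ are $A_i, C_{i+b}, C_{i+c}, C_{i+d}$, so a common neighbor is $C_i$ with $C_i \sim B_i$, i.e. $i \in \{i+b, i+c, i+d\}$, forcing one of $b,c,d$ to be $0$; (iii) symmetrically, using $A_iC_i$ and the common neighbor $B_i$, again forcing one of $b,c,d$ to be $0$ (this is the $\tau$-image of case (ii)); or (iv) a triangle among $B$- and $C$-vertices only, which is impossible since the $B$-vertices form an independent set, the $C$-vertices form an independent set, and the $B$--$C$ adjacencies alone cannot close a triangle (a $B$--$C$--$B$--$C$ walk has even length). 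So $\G$ has girth $3$ precisely when $3a = 0 \neq 2a$ in $\ZZ_n$ (forcing $3 \mid n$ and $a = \pm n/3$), or when at least one of $b,c,d$ equals $0$.

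Next I would rule out edge-transitivity in each of these triangle-producing cases. In an edge-transitive graph every edge lies on the same number of triangles, so it suffices to exhibit, under the triangle hypothesis, an edge on no triangle. By Lemma~\ref{le:sym} the automorphism $\rho$ acts transitively on each of the six $\la\rho\ra$-orbits of edges ($a$-edges, left edges, right edges, $b$-edges, $c$-edges, $d$-edges), so edge-transitivity would mean all six orbits are fused, and it is enough to check one representative edge per orbit. Consider first the case $3a = 0 \neq 2a$ with all of $b,c,d$ nonzero: here the only triangles are among $A$-vertices, so the $b$-edge $B_0C_b$ lies on no triangle (its endpoints $B_0$ and $C_b$ have neighbor sets $\{A_0, C_b, C_c, C_d\}$ and $\{A_b, B_0, B_{b-c}, B_{b-d}\}$, which intersect only in the edge itself since $b \neq 0$ and $B$- and $C$-vertices are never adjacent to each other in a triangle-closing way), contradicting edge-transitivity. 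The remaining possibility is that some of $b,c,d$ is $0$, say $c = 0$ (the cases $b=0$ or $d=0$ being handled identically, or via the relabelling symmetry); then $B_0$ and $C_0$ are adjacent and $A_0, B_0, C_0$ is a triangle, but the $a$-edge $A_0A_a$ lies on no triangle unless additionally $3a = 0 \neq 2a$ — and if that also holds then we are in a hybrid situation which I would treat by instead exhibiting a triangle-free edge of a different orbit or by a direct count of triangles per edge-orbit. The cleanest route is: assume $\G$ edge-transitive, let $t$ be the common number of triangles per edge; count triangles at a left edge $A_0B_0$ (these correspond to common neighbors of $A_0$ and $B_0$, i.e. to $C_0$ being adjacent to $B_0$, so $t \le 1$ with equality iff $0 \in \{b,c,d\}$) and at an $a$-edge $A_0A_a$ (these correspond to $2a = \pm a$, impossible, or to a common neighbor among $B,C$-vertices, also impossible, so $A_a A_0$ lies on a triangle only if $3a=0\neq 2a$). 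Matching these forces either $t=0$ (no triangles, so girth $>3$), or a simultaneous obstruction that I would resolve by also counting at a $b$-edge.

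The second statement then drops out: if $\WH_n(a,b,c,d)$ is edge-transitive then by the first part it has girth at least $4$, so none of the triangle-producing configurations occurs; in particular none of $b,c,d$ can be $0$, since $c=0$ (say) produces the triangle $A_0B_0C_0$. I expect the main obstacle to be bookkeeping in the overlap case where $0 \in \{b,c,d\}$ and simultaneously $3a = 0 \neq 2a$: there one must be careful that ``count triangles per edge orbit and compare'' genuinely yields a contradiction rather than an identity, and the safe way is to pin down the triangle count on an edge orbit (such as the right edges or $b$-edges) whose triangle count is unaffected by the $A$-vertex triangles, and show it differs from the count on the $a$-edges or left edges. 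Everything else is a routine check against the adjacency list of Construction~\ref{con:WHAT}, using that the $B$-vertices and the $C$-vertices each form independent sets and that $B$--$C$ edges cannot by themselves close a triangle.
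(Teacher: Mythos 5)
Your proposal is correct and rests on the same idea as the paper's proof: edge-transitivity forces every edge to lie on a triangle (equivalently, equal triangle counts on all edges), and a triangle through a $B$--$C$ edge $B_0C_x$ requires a common neighbor of $B_0$ and $C_x$, which can only be an $A$-vertex and hence forces $x=0$. The paper avoids your case analysis (the $3a=0$ triangles and the ``hybrid'' situation) entirely by applying this observation to all three edges $B_0C_b$, $B_0C_c$, $B_0C_d$ simultaneously, concluding $b=c=d=0$ and contradicting the pairwise distinctness of $b$, $c$, $d$ required in Construction~\ref{con:WHAT}.
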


\begin{proof}
Suppose $\G = \WH_n(a,b,c,d)$ is edge-transitive and is of girth $3$. Let $x \in \{b,c,d\}$. Since $\G$ is edge-transitive, the edge $B_0C_x$ is contained in a $3$-cycle, and so $B_0$ and $C_x$ have a common neighbor. By definition of $\G$ this is only possible if $x = 0$ (and this common neighbor is $A_0$). But then $b = c = d = 0$, contradicting the assumption from Construction~\ref{con:WHAT} that $b$, $c$ and $d$ are pairwise distinct.  
\end{proof}

Using Lemma~\ref{le:iso} by which we can assume that $a$ divides $n$, together with Lemma~\ref{le:girth3}, a computer search for all relatively small edge-transitive WH-graphs can be performed. A couple of hours of computation using a suitable package such as {\sc Magma}~\cite{Mag} on an ordinary computer reveals that there are no edge-transitive WH-graphs of order up to $3\cdot 71$ (that is, with $n \leq 71$). For ease of reference we record this fact as an observation.

\begin{observation}
\label{obs:PC}
There exists no edge-transitive WH-graph $\WH_n(a,b,c,d)$ with $n \leq 71$.
\end{observation}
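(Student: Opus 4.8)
The statement is a finite, computational claim, so the plan is to make an exhaustive check over all admissible parameter tuples feasible by first trimming the search space with the isomorphisms already at our disposal. By Lemma~\ref{le:iso}, applied with a unit $q \in \ZZ_n$ for which $qa \equiv \gcd(a,n) \pmod n$ (such a $q$ exists because the natural reduction $(\ZZ_n)^{\ast} \to (\ZZ_{n/\gcd(a,n)})^{\ast}$ is surjective), we may assume that $a$ divides $n$; together with the requirement $2a \neq 0$ this leaves, for each $n$, only the divisors of $n$ that differ from $n$ and --- when $n$ is even --- from $n/2$ as candidates for $a$. Next, the adjacency rules of Construction~\ref{con:WHAT} depend on $b$, $c$, $d$ only through the set $\{b,c,d\}$, so we may assume $0 \leq b < c < d < n$, and by Lemma~\ref{le:girth3} an edge-transitive example must in addition satisfy $1 \leq b < c < d \leq n-1$ and $\gcd(n,a,b,c,d) = 1$. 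One may optionally discard further redundancy --- the replacement $\{b,c,d\} \mapsto \{-b,-c,-d\}$ and multiplication of $(b,c,d)$ by the units $q \equiv 1 \pmod{n/a}$ that fix $a$, again via Lemma~\ref{le:iso} --- keeping a single representative per isomorphism class, but this is only an optimization and is not needed for correctness.

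For each surviving tuple $(n,a,b,c,d)$ with $3 \leq n \leq 71$ one then constructs the graph $\WH_n(a,b,c,d)$, which has $3n \leq 213$ vertices, and decides whether it is edge-transitive --- for instance by computing $\Aut(\G)$ in \textsc{Magma} and testing whether its induced action on the edge set has a single orbit. Here it is useful that $\langle \rho, \tau \rangle$ always has exactly five orbits on the edge set (the $a$-edges; the left edges together with the right edges; and the $b$-, $c$- and $d$-edges), so that $\G$ is edge-transitive precisely when $\Aut(\G)$ fuses these five orbits into one; since edge-transitivity forces every edge to lie on the same number of cycles of each fixed length, a quick comparison of the number of $4$-cycles, or of canonical $6$-cycles of the form~\eqref{eq:can6}, through an $a$-edge and through a $b$-edge discards the overwhelming majority of tuples with no group computation at all, leaving only a handful for which the full automorphism group must actually be computed. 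Carrying this out for every $n$ in the range terminates in a couple of hours on an ordinary computer and never produces an edge-transitive graph, which is the assertion.

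The one thing to watch is bookkeeping and run time rather than any mathematical subtlety: correctness of the conclusion rests only on the two elementary reductions above and on the reliability of the automorphism-group routine. The practical point --- and the only real obstacle --- is the size of the search: the number of admissible tuples grows roughly like $n^{3}$ (times the comparatively small number of divisors of $n$) for each value of $n$, so for $n$ near $71$ one is dealing with up to a few hundred thousand tuples, and it is the cheap cycle-counting filter, rather than a full $\Aut$ computation for every tuple, that keeps the total effort within the claimed couple of hours. Should one wish to avoid computation for the small cases entirely, the natural alternative would be to anticipate the structural machinery developed in the rest of this section --- the bound on $|\Aut(\G)|$ furnished by Proposition~\ref{pro:lucchini} together with a direct analysis of the canonical $6$-cycles --- but for an auxiliary fact of this kind the straightforward enumeration is both standard and entirely adequate.
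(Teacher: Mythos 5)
Your proposal is essentially the paper's own proof: the observation is established there by exactly the computer search you describe, using Lemma~\ref{le:iso} to reduce to $a$ dividing $n$ and Lemma~\ref{le:girth3} to restrict the remaining parameters, then checking edge-transitivity with {\sc Magma} for all $n \leq 71$. One small caution about your optional filter: the number of \emph{canonical} $6$-cycles through an edge is defined relative to $\rho$ and is not an $\Aut(\G)$-invariant (an $a$-edge lies on none, yet may lie on other $6$-cycles), so only total $4$-cycle or $6$-cycle counts through an edge can soundly be used to discard tuples before the full automorphism-group computation.
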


\begin{proposition}
\label{pro:2AT}
There are no $2$-arc-transitive WH-graphs.
\end{proposition}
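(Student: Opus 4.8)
The plan is to argue by contradiction, combining Lucchini's result (Proposition~\ref{pro:lucchini}) with the quotient reduction (Proposition~\ref{pro:quotient}) to reduce to a small base case already ruled out by Observation~\ref{obs:PC}, and to handle the base case by a local analysis of the $2$-arcs at an $A$-vertex. Suppose $\G = \WH_n(a,b,c,d)$ is $2$-arc-transitive. First I would note that $\rho$ generates a cyclic group $H = \la \rho \ra$ of order $n$ inside $G = \Aut(\G)$, and that $|G| = |V(\G)|\cdot|G_{v}|$ where $G_v$ is a vertex stabilizer. In a $2$-arc-transitive tetravalent graph the stabilizer of a vertex acts $2$-transitively on the $4$ neighbors, so $|G_v|$ is divisible by $12$ and in fact, since $\G$ is connected of valency $4$, the order of $G_v$ is bounded; more to the point, one wants a bound of the shape $|G| \leq |H|^2 = n^2$ in order to apply Proposition~\ref{pro:lucchini}. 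Since $|V(\G)| = 3n$, this amounts to showing $|G_v| \leq n/3$, which will hold as soon as $n$ is not tiny once we have an absolute bound on $|G_v|$ coming from the structure of $2$-arc-transitive tetravalent graphs (the amalgam/Weiss-type bound: the stabilizer of a vertex in a $2$-arc-transitive tetravalent graph has order dividing $2^4\cdot 3^2$, or at worst some fixed constant). Thus for all but finitely many $n$ — and in particular for all $n > 71$ — we get $|G| \leq n^2$, so by Proposition~\ref{pro:lucchini} the group $H$ contains a nontrivial subgroup $K$ normal in $G$.

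Next I would feed this $K$ into Proposition~\ref{pro:quotient}: the quotient $\G_K$ is again a $2$-arc-transitive WH-graph, of strictly smaller order $3(n/|K|)$. Iterating (each step strictly decreases $n$, and $2$-arc-transitivity is preserved), we eventually reach a $2$-arc-transitive WH-graph with $n \leq 71$ — unless at some stage the bound $|G| \leq n^2$ fails, i.e. we reach a graph with $n$ small enough that Lucchini no longer applies. Either way we land in the range $n \leq 71$ (for the ``Lucchini fails'' case one just needs the threshold where $|G_v|$ could exceed $n/3$ to be below $72$, which follows from the explicit constant bound on $|G_v|$). But Observation~\ref{obs:PC} says there is no edge-transitive — hence no $2$-arc-transitive — WH-graph with $n \leq 71$. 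This contradiction completes the argument.

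The one point that needs care, and which I expect to be the main obstacle, is making the reduction genuinely self-contained: specifically, pinning down an \emph{absolute} upper bound on $|G_v|$ for $\G$ a $2$-arc-transitive tetravalent graph, and then checking that this bound forces $|G| \leq n^2$ for every $n \geq 72$. The cleanest route is to invoke the classical fact (Gardiner, Weiss) that in a connected tetravalent $2$-arc-transitive graph the vertex stabilizer is one of finitely many explicitly known groups, of order at most $2^4 \cdot 3^2 = 144$; then $|G| = 3n\cdot|G_v| \leq 432\,n$, and $432\,n \leq n^2$ precisely when $n \geq 432$. For $72 \leq n \leq 431$ one would either extend the computer search of Observation~\ref{obs:PC}, or — better — observe that in those cases one can still often apply Lucchini because the \emph{actual} stabilizer order is much smaller than $144$ (e.g. if $\G$ is not $3$-arc-transitive then $|G_v| \mid 36$, giving $|G|\le 108n\le n^2$ for $n\ge 108$), whittling the exceptional range down to something the existing computation covers. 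An alternative, avoiding any appeal to the $2$-arc-transitive amalgam classification, is to exploit the canonical $6$-cycles from~\eqref{eq:can6}: in a WH-graph every edge lies on a controlled number of short cycles, which bounds the number of $2$-arcs that can be equivalent to a given one in a way independent of deep structure theory; but pushing that through to a clean numerical bound is exactly the routine-but-delicate part I would defer to the write-up.
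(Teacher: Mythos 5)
Your reduction has the right ingredients (Lucchini, the quotient proposition, the computer check), but it runs Lucchini in the opposite direction from what can actually be made to work, and the quantitative step it hinges on is wrong. The claim that a vertex stabilizer of a connected tetravalent $2$-arc-transitive graph has order dividing $2^4\cdot 3^2$ is false: quartic graphs can be $4$- and even $7$-arc-transitive (e.g.\ the incidence graphs of the generalized hexagons over $\mathrm{GF}(3)$ and their covers), and the correct Gardiner--Weiss/amalgam bound is $|G_v|\leq 2^4\cdot 3^6=11664$. With the true constant, your inequality $3n\,|G_v|\leq n^2$ only kicks in for $n\geq 3\cdot 11664$, so the range not covered by Proposition~\ref{pro:lucchini} is roughly $72\leq n<35000$, far beyond what Observation~\ref{obs:PC} (a couple of hours of computation for $n\leq 71$) or any realistic extension of it covers; your fallback estimates (e.g.\ ``$|G_v|\mid 36$ if not $3$-arc-transitive'') are likewise not correct and are not carried out. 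So as written the argument has a genuine hole exactly at the step you flagged, and the alternative you mention (a self-contained count of short cycles through $2$-arcs) is precisely the part you defer.

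For comparison, the paper closes the argument without any appeal to the amalgam classification by using Lucchini contrapositively on a \emph{minimal} counterexample: by Proposition~\ref{pro:quotient} and minimality, no nontrivial subgroup of $\la\rho\ra$ is normal in $\Aut(\G)$, so Proposition~\ref{pro:lucchini} forces $|\Aut(\G)|>n^2$, i.e.\ $n<36t$ where $t$ is the order of a $2$-arc stabilizer; Observation~\ref{obs:PC} then forces $t>2$. The contradiction comes from a purely combinatorial analysis inside the WH-graph itself: counting the $6$-cycles (including the canonical ones from~\eqref{eq:can6}) through a $2$-arc shows the constant $\lambda$ of $6$-cycles per $2$-arc satisfies $\lambda\in\{1,2,3\}$, the cases $\lambda=1,3$ are eliminated directly, and in the case $\lambda=2$ a local rigidity argument shows a $2$-arc stabilizer has order at most $2$, i.e.\ $t\leq 2$. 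That $6$-cycle analysis is the real content of the proof, and it is the piece your proposal would still need to supply (or replace by an honest treatment of the $72\leq n<3\cdot 11664$ range).
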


\begin{proof}
By way of contradiction, suppose $2$-arc-transitive WH-graphs exist and let $\G = \WH_n(a,b,c,d)$ be a smallest $2$-arc-transitive WH-graph. Let $\rho$ be as in \eqref{eq:rho}. By Proposition~\ref{pro:quotient}, no nontrivial subgroup of $\la \rho \ra$ is normal in $\Aut(\G)$, and so Proposition~\ref{pro:lucchini} implies that $|\Aut(\G)| > n^2$. Since $\G$ is of order $3n$ and is $2$-arc-transitive, $|\Aut(\G)| = 3n\cdot 12\cdot t$, where $t$ is the order of a stabilizer of a $2$-arc in $\Aut(\G)$. Then $n < 36t$, and so Observation~\ref{obs:PC} implies that $t > 2$. Analysing $6$-cycles of $\G$ we now show that this is not possible.

Since $\G$ is $2$-arc-transitive and possesses the canonical $6$-cycles, there exists a positive constant $\lambda$ such that each $2$-arc of $\G$ lies on $\lambda$ different $6$-cycles. Clearly, any $6$-cycle of $\G$ containing the $2$-path $P = (B_0,A_0,C_0)$ is of the form 
\begin{equation}
\label{eq:6cyc_trian}
(C_x,B_0,A_0,C_0,B_{-y},A_{-y}),
\end{equation} 
where $x,y \in \{b,c,d\}$ are such that $x = -y$. Therefore, the above $2$-path $P$ lies on a $6$-cycle of $\G$ if and only if $2x = 0$ for some $x \in \{b,c,d\}$ or $x+y = 0$ for a pair of distinct $x,y \in \{b,c,d\}$ (or both). Since $b,c$ and $d$ are pairwise distinct and are all nonzero (by Lemma~\ref{le:girth3}), $2x = 0$ for at most one $x \in \{b,c,d\}$ and $x+y = 0$ for at most one pair of distinct $x,y \in \{b,c,d\}$. Therefore, since a condition of the form $2x = 0$ gives rise to one $6$-cycle through $P$, while a condition of the form $x+y = 0$ for $x \neq y$ gives rise to two $6$-cycles through $P$, we have that $\lambda \in \{1,2,3\}$. 

Suppose first that $\lambda = 1$. Inspecting the canonical $6$-cycles from \eqref{eq:can6} we see that in this case, for any $2$-path containing no $A$-vertices, the unique $6$-cycle through it is a canonical $6$-cycle. It thus follows that the unique $6$-cycle through the $2$-path $(A_0,A_a,C_a)$ must contain a $4$-path of the form $(A_0,A_a,C_a,B_{a-x},A_{a-x})$, where $x \in \{b,c,d\}$. Therefore, $A_0$ and $A_{a-x}$ must have a common neighbor, which obviously must be an $A$-vertex and is thus $A_{-a}$ (and so $a-x = -2a$). But then this $6$-cycle and the one obtained by applying $\rho^a$ are two $6$-cycles containing the $2$-path $(A_{-a},A_0,A_a)$, contradicting $\lambda = 1$.

Suppose next that $\lambda = 3$. The above discussion shows that we can assume that $b = n/2$ and $c+d = 0$. Consider the corresponding three $6$-cycles from \eqref{eq:6cyc_trian}. The successors of $B_0$ (in the direction of the $2$-arc $(C_0,A_0,B_0)$) on them are $C_b$, $C_c$ and $C_d$, and so each $3$-arc whose initial $2$-arc is $(C_0,A_0,B_0)$ lies on a $6$-cycle. As $\G$ is $2$-arc-transitive, each $3$-arc lies on a $6$-cycle. But this clearly is not the case for the $3$-arc $(B_0,A_0,A_a,B_a)$, since no two $C$-vertices are adjacent, showing that $\lambda \neq 3$.

We are left with the possibility $\lambda = 2$. Recall that in this case we can assume that $c+d = 0$ and $b \neq n/2$. Again, inspecting the corresponding $6$-cycles from \eqref{eq:6cyc_trian} we find that for each $2$-arc $(u,v,w)$ of $\G$ there is a unique $z \in \G(w) \setminus \{v\}$ (here $\G(w)$ is the set of all neighbors of $w$ in $\G$) such that $\G$ has no $6$-cycle containing the $3$-arc $(u,v,w,z)$, and moreover, that the two $6$-cycles of $\G$ through $(u,v,w)$ have precisely these three vertices in common. We now show that this contradicts $t > 2$. To this end, let $(u,v,w)$ be any $2$-arc of $\G$, let $x \in \G(u) \setminus \{v\}$ and $y \in \G(w) \setminus \{v\}$ be the unique vertices such that $\G$ has no $6$-cycle through $(x,u,v,w)$ or $(u,v,w,y)$, and let $u',u'', w',w''$ be such that $\G(u) = \{x,u',u'',v\}$, $\G(w) = \{v,w',w'',y\}$ and that $\G$ has a $6$-cycle through each of $(u', u, v, w, w')$ and $(u'', u, v, w, w'')$, see Figure~\ref{fig:2ATproof}. 
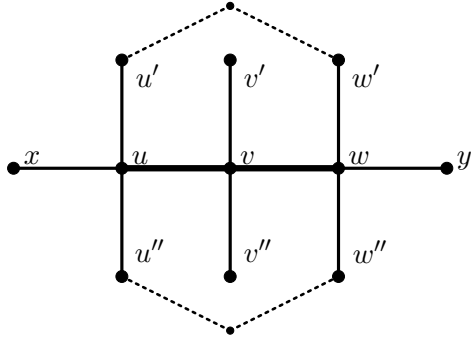
\begin{figure}[htbp]
\begin{center}
\begin{tikzpicture}[line cap=round,line join=round,>=triangle 45,x=.8cm,y=.8cm, scale = 0.9]
\clip(5,8.9) rectangle (14.5,15.5);
\draw [line width=1.2pt] (6,12)-- (8,12);
\draw [line width=2.4pt] (8,12)-- (10,12);
\draw [line width=2.4pt] (10,12)-- (12,12);
\draw [line width=1.2pt] (12,12)-- (14,12);
\draw [line width=1.2pt] (8,12)-- (8,14);
\draw [line width=1.2pt] (10,12)-- (10,14);
\draw [line width=1.2pt] (12,12)-- (12,14);
\draw [line width=1.2pt] (12,12)-- (12,10);
\draw [line width=1.2pt] (10,12)-- (10,10);
\draw [line width=1.2pt] (8,12)-- (8,10);
\draw [line width=1pt,dotted] (8,14)-- (10,15);
\draw [line width=1pt,dotted] (10,15)-- (12,14);
\draw [line width=1pt,dotted] (8,10)-- (10,9);
\draw [line width=1pt,dotted] (10,9)-- (12,10);
\draw (8.051534646409303,10.8683662429718) node[anchor=north west] {$u''$};
\draw (10.063554021952399,10.8683662429718) node[anchor=north west] {$v''$};
\draw (12.075573397495495,10.849019902822349) node[anchor=north west] {$w''$};
\draw (6,12.5) node[anchor=north west] {$x$};
\draw (8.0,12.5) node[anchor=north west] {$u$};
\draw (12.0,12.5) node[anchor=north west] {$w$};
\draw (10.0,12.5) node[anchor=north west] {$v$};
\draw (14.0,12.5) node[anchor=north west] {$y$};
\draw (8.070880986558757,14.1) node[anchor=north west] {$u'$};
\draw (10.063554021952399,14.1) node[anchor=north west] {$v'$};
\draw (12.05622705734604,14.1) node[anchor=north west] {$w'$};
\begin{scriptsize}
\draw [fill=black] (6,12) circle (2.5pt);
\draw [fill=black] (8,12) circle (2.5pt);
\draw [fill=black] (10,12) circle (2.5pt);
\draw [fill=black] (12,12) circle (2.5pt);
\draw [fill=black] (14,12) circle (2.5pt);
\draw [fill=black] (8,14) circle (2.5pt);
\draw [fill=black] (10,14) circle (2.5pt);
\draw [fill=black] (12,14) circle (2.5pt);
\draw [fill=black] (8,10) circle (2.5pt);
\draw [fill=black] (10,10) circle (2.5pt);
\draw [fill=black] (12,10) circle (2.5pt);
\draw [fill=black] (10,15) circle (1.5pt);
\draw [fill=black] (10,9) circle (1.5pt);
\end{scriptsize}
\end{tikzpicture}
\caption{The local situation around a $2$-arc in the case of $\lambda = 2$.}
\label{fig:2ATproof}
\end{center}
\end{figure}
Since one of the two $6$-cycles through $(u',u,v)$ contains $(u',u,v,w)$, we can also assume that $\G(v) = \{u,v',v'',w\}$, where $v''$ is such that there are no $6$-cycles through $(u',u,v,v'')$ (note however that there is a $6$-cycle through $(u',u,v,v')$). Suppose now that $\alpha \in \Aut(\G)$ fixes each of $u$, $v$ and $w$. By definition of $x$ and $y$ it then also fixes each of $x$ and $y$. Now, suppose that in addition $\alpha$ also fixes at least one of $u',u'',v',v'',w',w''$. Because of the two $6$-cycles through $(u,v,w)$ it is clear that if $\alpha$ fixes any of $u',u'',w',w''$ it fixes each of them. Because of the two $6$-cycles through $(u',u,v)$ it now follows that if $\alpha$ fixes any of $u',u'',w',w''$ it then also fixes $v'$ and thus also $v''$. Conversely, if $\alpha$ fixes any of $v'$ and $v''$, it fixes both, and then the fact that $u' \in \G(u) \setminus \{v\}$ is the unique vertex such that there is no $6$-cycle through $(v'',v,u,u')$ implies that $\alpha$ also fixes $u'$ (and thus each of $u',u'',w',w''$). This thus shows that if $\alpha$ fixes any of $u',u'',v',v'',w',w''$ it fixes each of them. It is now easy to see that any such $\alpha$ also fixes each neighbor of any of $x,y,u',u'',v',v'',w',w''$. Since $\G$ is connected, an inductive approach then shows that $\alpha$ is the identity. Therefore, the stabilizer of $(u,v,w)$ in $\Aut(\G)$ has at most one nontrivial element, and so $t \leq 2$, a contradiction. 
\end{proof}

Before stating and proving the main result of this section we review a concept from~\cite{MikPotWil08} that will play an important role in our proof. Here we only describe the part essential to our proof, but see~\cite{MikPotWil08} for details. Suppose $\G$ is a tetravalent arc-transitive graph and $\cC$ is a set of cycles of $\G$ such that each edge of $\G$ lies on precisely one cycle from $\cC$ (in other words, $\cC$ is a decomposition of the edge set of $\G$ into cycles). If a subgroup $G \leq \Aut(\G)$ preserves the set $\cC$ (that is, each element of $G$ maps cycles from $\cC$ to cycles from $\cC$) then we say that $\cC$ is a {\em $G$-invariant cycle decomposition} of $\G$. Our argument in the proof of Theorem~\ref{the:ET} will rely on the following corollary of~\cite[Theorem~4.2]{MikPotWil08}.

\begin{proposition}\cite{MikPotWil08}
\label{pro:cycle_str}
Let $\G$ be a tetravalent arc-transitive graph. If $\G$ is not $2$-arc-transitive, then there exists at least one $\Aut(\G)$-invariant cycle decomposition of $\G$. 
\end{proposition}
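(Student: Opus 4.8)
The plan is to reduce the statement to an elementary fact about the \emph{local action} of $G := \Aut(\G)$ and then to spread a suitable combinatorial structure over the whole graph using arc-transitivity. Fix a vertex $v$ and let $L = G_v^{\G(v)} \le \S_4$ be the permutation group induced by the vertex stabiliser $G_v$ on the four neighbours of $v$. Since $\G$ is arc-transitive, $L$ is transitive, and since $\G$ is not $2$-arc-transitive, $L$ is not $2$-transitive. Up to conjugacy the transitive non-$2$-transitive subgroups of $\S_4$ are exactly the cyclic group $\ZZ_4$, the regular Klein four-group $V = \{\mathrm{id},(12)(34),(13)(24),(14)(23)\}$, and the dihedral group $D_4$. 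The point I would extract is that each of these leaves invariant a partition of the four-element set $\G(v)$ into two pairs: for $\ZZ_4$ and $D_4$ this is the unique partition into the two ``diagonals'' of the invariant $4$-cycle, while for $V$ it is any one of the three partitions into pairs (all three being $V$-invariant). Hence $G_v$ leaves invariant a partition $\pi_v$ of $\G(v)$, equivalently a pairing of the four edges of $\G$ at $v$ into two pairs.

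Second, I would globalise this. Fix once and for all a vertex $v_0$ together with a $G_{v_0}$-invariant pairing $\pi_0$ of the edges at $v_0$, and for an arbitrary vertex $w$ pick $g \in G$ with $v_0 g = w$ and set $\pi_w := \pi_0 g$. This is well defined because $\pi_0$ is $G_{v_0}$-invariant, so $\pi_0 g$ depends only on the coset $G_{v_0}g$, that is, only on $w$; and by construction the family $(\pi_w)_{w \in V(\G)}$ is $G$-invariant in the sense that $\pi_w h = \pi_{wh}$ for all $h \in G$. Reading this family as an instruction --- a walk passing through $w$ along one edge of a pair of $\pi_w$ must leave $w$ along the other edge of that pair --- and using that each edge lies in exactly one pair at each of its two endpoints, one obtains a partition of the edge set of $\G$ into closed walks with each edge in exactly one of them, and $G$-invariance of $(\pi_w)$ makes $G$ permute these closed walks among themselves.

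The one step I expect to be the real obstacle is the assertion that, for a suitable choice of the pairings, the closed walks produced above are genuine (simple) cycles rather than mere circuits; this is precisely the content of~\cite[Theorem~4.2]{MikPotWil08}. When the local action is $D_4$ or $V$ one can argue fairly directly, since both groups contain an involution interchanging the two parts of the invariant pairing and a short analysis then restricts how such an involution can act on a closed walk meeting $v$ twice; but when the local action is $\ZZ_4$ no such involution is available, and one must appeal to the theory of $G$-consistent cycles that underlies~\cite{MikPotWil08}: a tetravalent $G$-arc-transitive graph has exactly three $G$-orbits of $G$-consistent cycles, and when $\G$ is not $2$-arc-transitive each of these orbits is a decomposition of $E(\G)$ into simple cycles. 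At this point I would simply quote~\cite[Theorem~4.2]{MikPotWil08}, which then delivers the desired $\Aut(\G)$-invariant cycle decomposition.
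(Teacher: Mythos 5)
Your proposal is in substance the same as the paper's treatment: the paper offers no independent proof of this proposition, stating it as a direct corollary of \cite[Theorem~4.2]{MikPotWil08}, and your argument likewise defers to that theorem for the only genuinely hard step (that the closed walks arising from a $G$-invariant pairing of edges at each vertex can be taken to be simple cycles), so the proof stands on the same citation. The preliminary local-action analysis (transitive non-$2$-transitive subgroups of $\S_4$ preserve a pairing, which is then transported by arc-transitivity) is correct but becomes redundant once the theorem is quoted; only be careful with the side claim that each of the three orbits of consistent cycles is itself an edge decomposition, which is stronger than what you need and should not be asserted without checking it against \cite{MikPotWil08}.
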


\begin{theorem}
\label{the:ET}
There are no edge-transitive WH-graphs.
\end{theorem}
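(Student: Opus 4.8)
The plan is to argue by contradiction, using the invariant cycle decomposition of Proposition~\ref{pro:cycle_str} to pin down the structure of a hypothetical smallest example and then to eliminate it via Proposition~\ref{pro:lucchini}, the quotient construction of Proposition~\ref{pro:quotient}, and Observation~\ref{obs:PC}. So suppose edge-transitive WH-graphs exist and fix one, $\G = \WH_n(a,b,c,d)$, of smallest order; let $\rho,\tau$ be as in Lemma~\ref{le:sym}. By Proposition~\ref{pro:2AT}, $\G$ is not $2$-arc-transitive. Next, $\G$ cannot be semisymmetric: the group $\la\rho,\tau\ra\le\Aut(\G)$ has exactly two orbits on $V(\G)$, the set of $A$-vertices and the union of the $B$- and $C$-vertices, of sizes $n$ and $2n$, so if $\G$ were not vertex-transitive these would be the two vertex-orbits of $\Aut(\G)$, whereas a connected regular edge-transitive graph that is not vertex-transitive is bipartite with the two (necessarily equally sized) parts being the two vertex-orbits. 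Hence $\G$ is vertex-transitive, and therefore arc-transitive but not $2$-arc-transitive, or half-arc-transitive. In either case $\G$ carries an $\Aut(\G)$-invariant cycle decomposition $\cC$ — by Proposition~\ref{pro:cycle_str} in the first case, and via the decomposition into alternating cycles in the second. Since $\Aut(\G)$ is edge-transitive and each edge lies on a unique member of $\cC$, all members of $\cC$ have a common length $m$, and since $\Aut(\G)$ is vertex-transitive, the two members of $\cC$ through any vertex always meet in the same number $\mu$ of vertices.

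The core of the argument is a local analysis of $\cC$. At a vertex $A_i$ the four incident edges (the two $a$-edges, the left edge and the right edge) are split by $\cC$ into two pairs; using that this splitting is the same up to translation by $\rho$ for all $i$, that it is compatible with $\tau$, and that all members of $\cC$ have length $m$, one shows that $\cC$ must contain the ``$A$-cycles'' $(A_i,A_{i+a},A_{i+2a},\dots)$ (the remaining possibilities for how $\cC$ meets the $A$-vertices being eliminated similarly), so that $m = n/\gcd(n,a)$. Tracing out the other members of $\cC$ through the $B$- and $C$-vertices, one finds a fixed $x_0\in\{b,c,d\}$ for which the member of $\cC$ through $A_i$ other than an $A$-cycle has the form $(\dots,B_i,C_{i+x_0},A_{i+x_0},B_{i+x_0},C_{i+2x_0},\dots)$, of length $3n/\gcd(n,x_0)$, while the members covering the remaining $y$- and $z$-edges, where $\{y,z\}=\{b,c,d\}\setminus\{x_0\}$, have the form $(\dots,B_i,C_{i+y},B_{i+y-z},C_{i+2y-z},\dots)$, of length $2n/\gcd(n,y-z)$. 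Equating all three lengths to $m$ forces $\gcd(n,x_0)=3\gcd(n,a)$ and $\gcd(n,y-z)=2\gcd(n,a)$; in particular $3\mid m$, and reading off the two members of $\cC$ through $A_0$ yields $\mu = m/3$. As a member of $\cC$ is a cycle of $\G$ and $\G$ has girth at least $4$ by Lemma~\ref{le:girth3}, $m\ge 4$, hence $m\ge 6$. Finally, examining which members of $\cC$ share vertices (equivalently, computing explicitly when $m=6$) shows that $\gcd(n,a)$ divides each of $b$, $c$, $d$; since it also divides $a$ and $n$, the coprimality hypothesis in Construction~\ref{con:WHAT} gives $\gcd(n,a)=1$, and therefore $m=n$.

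To finish, recall that by Observation~\ref{obs:PC} we have $n\ge 72$, so $\mu = m/3 = n/3\ge 3$. I then claim the pointwise stabiliser in $\Aut(\G)$ of the closed neighbourhood of a vertex $v$ is trivial: such an automorphism fixes three consecutive vertices, hence all vertices, of each of the two members of $\cC$ through $v$, and then any further member of $\cC$ meeting an already pointwise-fixed one shares $\mu\ge 3$ of its vertices with it and so is fixed pointwise as well, whence the automorphism is trivial by connectedness. Consequently $\Aut(\G)_v$ acts faithfully on the four neighbours of $v$, as a transitive (but non-$2$-transitive), or two-orbits-of-length-two, subgroup of the symmetric group on four points; thus $|\Aut(\G)_v|\le 8$ and $|\Aut(\G)| = 3n\,|\Aut(\G)_v|\le 24n\le n^2$. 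Proposition~\ref{pro:lucchini} then provides a nontrivial $K\le\la\rho\ra$ normal in $\Aut(\G)$, and by Proposition~\ref{pro:quotient} the quotient $\G_K$ is an edge-transitive WH-graph of strictly smaller order, contradicting the minimality of $\G$.

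The step I expect to be the main obstacle is the local analysis of $\cC$ in the second paragraph: pinning down exactly how $\cC$ can restrict to the $A$-vertices (a short but delicate case distinction using $\rho$, $\tau$ and the common-length condition) and then tracing the remaining cycles carefully enough to extract the two facts on which the endgame turns — that $\mu = m/3$ and that $\gcd(n,a)=1$. The half-arc-transitive case also requires a little extra care, since there one must work with the alternating-cycle decomposition rather than with the decomposition supplied by Proposition~\ref{pro:cycle_str}, and verify that $\tau$ (indeed all of $\Aut(\G)$) preserves it.
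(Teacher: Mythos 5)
Your overall scaffolding (minimal counterexample, Proposition~\ref{pro:2AT}, an $\Aut(\G)$-invariant cycle decomposition $\cC$, then Proposition~\ref{pro:lucchini} and Proposition~\ref{pro:quotient} against minimality) matches the paper's, but the step you yourself flag as the main obstacle is a genuine gap, and it is where essentially all of the content of the paper's proof lies. Your claim that $\cC$ must contain the monochromatic $A$-cycles $(A_i,A_{i+a},A_{i+2a},\dots)$ cannot be extracted from $\rho$-, $\tau$- and equal-length considerations alone: the alternative local shape, in which the member of $\cC$ through the left edge $A_0B_0$ turns onto an $a$-edge and thus contains $(B_0,A_0,A_a)$ (so that, apart from the cycles of $b$- and $d$-edges, every member of $\cC$ mixes $a$-edges with left and right edges), is fully compatible with $\la \rho,\tau\ra$ and with all members of $\cC$ having the same length. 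This is exactly Case~2 in the paper's proof of Theorem~\ref{the:ET}, and eliminating it is the hardest part of the argument: the paper first obtains $|\Aut(\G)|>n^2$ (Lucchini used in the direction opposite to yours, via minimality and Proposition~\ref{pro:quotient}), so that Observation~\ref{obs:PC} gives a nontrivial arc stabilizer, and then exploits an automorphism fixing $B_0$ and $C_b$ while interchanging $A_0$ and $C_c$, tracking the four cycles $\cC^{B_0C_b}$, $\cC^{B_0C_c}$, $\cC^{A_0C_0}$ and $\cC^{C_cB_{c-d}}$ to an adjacency contradiction. Nothing in your sketch addresses this configuration. Moreover, even in the configuration you do treat (the paper's Case~1), the facts your endgame relies on --- that $\gcd(n,a)$ divides $b$, $c$ and $d$, hence $\gcd(n,a)=1$, $m=n$ and $\mu=n/3\geq 3$ --- are asserted rather than proved (``examining which members of $\cC$ share vertices'' is precisely the missing computation), and the paper in fact disposes of this case by a shorter, different route: a nontrivial arc stabilizer forces any two intersecting members of $\cC$ to meet in at most two vertices, which together with the index-$3$ computation forces $2c=0$ and length-$6$ cycles, and then the two members through $B_0$ have antipodal vertices of different types, contradicting vertex-transitivity.

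Two smaller points. First, the separate half-arc-transitive branch is unnecessary: for any $x\in\{b,c,d\}$ the automorphism $\tau\rho^{x}$ interchanges $B_0$ and $C_x$, i.e.\ reverses an edge, so an edge-transitive WH-graph is automatically arc-transitive; this is the paper's opening observation, and it also subsumes your orbit-size argument against semisymmetry. Your alternating-cycle detour is not wrong, but it would inherit the same gap in the local analysis. Second, your endgame (triviality of the pointwise stabilizer of a closed neighbourhood when $\mu\geq 3$, hence $|\Aut(\G)_v|\leq 8$, then Proposition~\ref{pro:lucchini} to produce a nontrivial normal subgroup of $\Aut(\G)$ inside $\la\rho\ra$ and Proposition~\ref{pro:quotient} to contradict minimality) is a legitimate reversal of the paper's use of Lucchini's result, but it is conditional on $\mu\geq 3$, i.e.\ precisely on the unproven structural claims above; as it stands the proposal therefore does not constitute a proof.
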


\begin{proof}
We first point out that since each WH-graph admits the automorphism $\tau$ from Lemma~\ref{le:sym} a WH-graph is edge-transitive if and only if it is arc-transitive. It thus suffices to show that there are no arc-transitive WH-graphs. By way of contradiction, suppose arc-transitive WH-graphs do exist and let $\G = \WH_n(a,b,c,d)$ be a smallest arc-transitive WH-graph. Proposition~\ref{pro:2AT} implies that $\G$ is not $2$-arc-transitive, and so Proposition~\ref{pro:cycle_str} implies that $\G$ admits at least one $\Aut(\G)$-invariant cycle decomposition. Let $\cC$ be one of them. For an adjacent pair of vertices $u$, $v$ of $\G$ we denote the unique member of $\cC$ containing the edge $uv$ by $\cC^{uv}$.

Just as in the proof of Proposition~\ref{pro:2AT} we obtain that $|\Aut(\G)| > n^2$. Since $\G$ is of order $3n$ and is arc-transitive, $|\Aut(\G)| = 3n\cdot 4\cdot t$, where $t$ is the order of a stabilizer of an arc in $\Aut(\G)$. Then $n < 12t$, and so Observation~\ref{obs:PC} implies that $t > 6$. In particular, $t > 1$. Now, let $u$ be a vertex of $\G$ and $v,v',w,w'$ be its four neighbors, where we have denoted them in such a way that $(v,u,v')$ is contained in $\cC^{uv}$ and $(w,u,w')$ is contained in $\cC^{uw}$. Let $s$ be the size of the intersection of the vertex sets of $\cC^{uv}$ and $\cC^{uw}$. Since $t > 1$, there exists an automorphism $\alpha \in \Aut(\G)$ fixing $v$ and $u$ while moving at least one of $v',w$ and $w'$, but as $\cC$ is $\Aut(\G)$-invariant, $\alpha$ fixes $\cC^{uv}$ pointwise and reflects $\cC^{uw}$ with respect to $u$. Consequently, $\alpha$ fixes each of the $s$ common vertices of $\cC^{uv}$ and $\cC^{uw}$ while reflecting $\cC^{uw}$, and so $s \leq 2$ must hold. As $\G$ is vertex-transitive and $\cC$ is $\Aut(\G)$-invariant, each pair of distinct nondisjoint cycles of $\cC$ meets in $s$ vertices.

Consider now the left edge $A_0B_0$. With no loss of generality we can assume that the member of $\cC$ containing this edge contains the $2$-path $(A_0,B_0,C_c)$. Applying the automorphism $\tau\rho^c$, where $\rho$ and $\tau$ are from Lemma~\ref{le:sym}, we see that this cycle in fact contains the $3$-path $(A_0,B_0,C_c,A_c)$. Moreover, considering the orbits of this cycle under the subgroup $\la \rho\ra$ we see that the other member of $\cC$ containing the vertex $B_0$ consists solely of $b$- and $d$-edges and is thus of the form $(B_0,C_b,B_{b-d},C_{2b-d}, \ldots)$. Edge-transitivity of $\G$ therefore implies that all members of $\cC$ are of length 
\begin{equation}
\label{eq:cyc_length}
\frac{2n}{\gcd(d-b,n)}.
\end{equation}
We distinguish two possibilities depending on what the cycle $\cC^{A_0B_0}$ looks like.
\smallskip

\noindent
{\sc Case 1}: there is a cycle in $\cC$ containing $(B_0,A_0,C_0)$.\\
In view of the automorphism $\rho$ it is clear that, besides the cycles consisting solely of the $b$- and $d$-edges, $\cC$ consists of all the cycles of the forms
\begin{align*}
(\ldots, A_i,A_{i+a},A_{i+2a},\ldots), &\ i \in \ZZ_n\\
(\ldots, B_i, A_i, C_i, B_{i-c}, A_{i-c}, C_{i-c},B_{i-2c}, \ldots), &\ i \in \ZZ_n.
\end{align*}
The first of these are of length $n/\gcd(a,n)$ while the second are of length $3n/\gcd(c,n)$. As both of these lengths need to equal the length from~\eqref{eq:cyc_length}, we obtain $\gcd(d-b,n) = 2\gcd(a,n)$ and $\gcd(c,n) = 3\gcd(a,n)$. In particular, $\la c \ra$ is an index $3$ subgroup of $\la a \ra$ in $\ZZ_n$. The intersection of the two members of $\cC$ containing the vertex $A_0$ is thus 
$$
	\{A_i \colon i \in \la a \ra \cap \la c \ra\} = \{A_i \colon i \in \la c \ra\},
$$
and so $s \leq 2$ implies that $2c = 0$. Lemma~\ref{le:girth3} thus forces $n$ to be even and $c = n/2$. The cycles from $\cC$ are therefore of length $6$, $n$ is divisible by $6$, and the two members of $\cC$ containing $A_0$ meet in their antipodal pair of vertices. However, considering the two members of $\cC$ containing $B_0$ we see that the antipodal vertex in one of them is $B_{n/2}$, while in the other one it is a $C$-vertex, which contradicts vertex-transitivity of $\G$. 
\smallskip

\noindent
{\sc Case 2}: no cycle in $\cC$ contains $(B_0,A_0,C_0)$.\\
With no loss of generality we can then assume that the cycle $\cC^{A_0B_0}$ contains $(B_0,A_0,A_a)$, and so applying $\rho$ we see that besides the cycles consisting solely of the $b$- and $d$-edges, the only other members of $\cC$ are of the form
$$
	(\ldots, B_i,A_i,A_{i+a},C_{i+a},B_{i+a-c},A_{i+a-c},A_{i+2a-c},C_{i+2a-c},B_{i+2a-2c}, \ldots), \ i \in \ZZ_n.
$$
This implies that the two members of $\cC$ containing the vertex $B_0$ are of the form
\begin{align*}
	\cC^{B_0C_b} \colon & (\ldots , C_{2d-b}, B_{d-b}, C_d, B_0, C_b, B_{b-d}, \ldots ) \\
	\cC^{B_0C_c} \colon & (\ldots, C_a, A_a, A_0, B_0, C_c, A_c, \ldots).
\end{align*}
Since $t > 1$, there exists an automorphism $\alpha \in \Aut(\G)$ fixing each of $B_0$ and $C_b$ but interchanging $A_0$ and $C_c$. It follows that $\alpha$ fixes the cycle $\cC^{B_0C_b}$ pointwise and reflects $\cC^{B_0C_c}$ with respect to $B_0$. In particular, $\alpha$ fixes $B_{b-d}$ and interchanges $A_0$ with $C_c$, and thus also $\cC^{A_0C_0}$ with $\cC^{C_cB_{c-d}}$. Observe that these two cycles are of the form
\begin{align*}
	\cC^{A_0C_0} \colon & (\ldots , B_{-a}, A_{-a}, A_0, C_0, B_{-c}, \ldots ) \\
	\cC^{C_cB_{c-d}} \colon & (\ldots, C_{c-b+d}, B_{c-b}, C_c,B_{c-d},C_{b+c-d}, \ldots).
\end{align*}
Therefore, $\alpha$ maps $C_{b+c-d}$ to one of $B_{-a}$ and $B_{-c}$. However, the fixed vertex $B_{b-d}$ is adjacent to $C_{b+c-d}$ but is clearly not adjacent to any of $B_{-a}$ and $B_{-c}$, bringing us to the final contradiction.
\end{proof}

\section{The vertex-transitive WH-graphs}
\label{sec:VT}

While there are no edge-transitive WH-graphs, one can find vertex-transitive examples. In fact, as the next two propositions show, there are infinitely many of them. Before introducing the corresponding two infinite families we make a simple observation. Let $\G = \WH_n(a,b,c,d)$ be a WH-graph and recall that $\G$ admits the automorphisms $\rho$ and $\tau$ from Lemma~\ref{le:sym}. The subgroup $\la \rho, \tau \ra$ of $\Aut(\G)$ has two orbits on the vertex set of $\G$ with one orbit consisting of all the $A$-vertices. It thus follows that $\G$ is vertex-transitive if and only if there is an automorphism of $\G$ mapping at least one $A$-vertex to a $B$- or a $C$-vertex. 

\begin{proposition}
\label{pro:VTfam1}
Let $n \geq 4$ be an even integer and let $a,b,c,d$ be integers with $1 \leq a < n/2$ and $0 \leq b,c,d < n$ such that $b,c$ and $d$ are pairwise distinct, $a$, $b$ and $d$ are odd, while $c$ is even, and each of the following holds:
\begin{itemize}
\itemsep = 0pt
\item $d = 2a + b$;
\item $2c = 3a+3b$.
\end{itemize}
Then the graph $\WH_n(a,b,c,d)$ is vertex-transitive.
\end{proposition}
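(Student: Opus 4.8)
The plan is to apply the observation stated immediately before the proposition: the subgroup $\langle \rho, \tau \rangle$ already acts transitively on the $A$-vertices (and has the set of all $B$- and $C$-vertices as its other orbit), so $\G = \WH_n(a,b,c,d)$ is vertex-transitive as soon as we produce a single automorphism sending some $A$-vertex into the set of $B$- and $C$-vertices. Thus the whole task reduces to exhibiting one permutation of $V(\G)$ and checking that it preserves adjacency.

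To locate the right permutation I would first exploit the hypothesis that $n$ is even together with the prescribed parities ($a,b,d$ odd, $c$ even): splitting each $\langle \rho \rangle$-orbit into its even- and odd-indexed halves gives a partition of $V(\G)$ into six classes that are blocks for $\langle \rho^2 \rangle$, and one checks by inspection that the induced $6$-vertex multigraph admits the automorphism $A_{\mathrm{ev}} \to B_{\mathrm{ev}} \to C_{\mathrm{ev}} \to A_{\mathrm{ev}}$, $A_{\mathrm{od}} \to C_{\mathrm{od}} \to B_{\mathrm{od}} \to A_{\mathrm{od}}$. Reading off the index shifts that this forces along each $\langle\rho\rangle$-orbit of edges --- equivalently, tracking where the four neighbours of $A_0$ are compelled to go --- one is led to the candidate map
\[
\sigma\colon\quad A_i \mapsto \begin{cases} B_i, & i\ \mathrm{even},\\ C_{i+a+b}, & i\ \mathrm{odd},\end{cases}\qquad B_i\mapsto\begin{cases} C_{i+c}, & i\ \mathrm{even},\\ A_{i+a+b}, & i\ \mathrm{odd},\end{cases}\qquad C_i\mapsto\begin{cases} A_i, & i\ \mathrm{even},\\ B_{i+a+b-c}, & i\ \mathrm{odd}.\end{cases}
\]
Because $a+b$ and $a+b-c$ are even (here the parity hypotheses enter), each shift maps even indices to even and odd to odd, so $\sigma$ restricts to a bijection on the $3n/2$ even-indexed vertices and on the $3n/2$ odd-indexed ones, hence is a permutation of $V(\G)$; and $\sigma(A_0)=B_0$.

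It then remains to verify that $\sigma$ sends edges to edges. I would do this one $\langle\rho\rangle$-orbit of edges at a time --- the $a$-edges, the left edges, the right edges, and the $b$-, $c$- and $d$-edges --- in each case further distinguishing the parity of the subscript, giving twelve short computations. They all come down to the two defining relations: for example an $a$-edge $A_iA_{i+a}$ with $i$ even is sent to $\{B_i, C_{i+2a+b}\}$, which is an edge because $2a+b = d$; a $b$-edge $B_iC_{i+b}$ with $i$ even is sent to $\{C_{i+c}, B_{i+a+2b-c}\}$, which is an edge because $2c-a-2b = d$ (here one uses both $2c=3a+3b$ and $d=2a+b$); a $d$-edge with $i$ even goes to an edge because $2c-d-a-b = b$; and every remaining case collapses to one of the trivial adjacencies $A_j\sim B_j$, $A_j\sim C_j$ or $A_j\sim A_{j\pm a}$. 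Once $\sigma$ is written down, none of this is difficult, so the only real content of the argument is finding $\sigma$: the split by parity is essentially forced by the parity hypotheses, and it is precisely the relations $d=2a+b$ and $2c=3(a+b)$ that make every adjacency check close up --- which is why they are exactly the conditions imposed.
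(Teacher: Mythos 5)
Your proposal is correct and takes essentially the same route as the paper: it exhibits the same explicit permutation (your $\sigma$ coincides with the paper's map once the relations $d=2a+b$ and $2c=3a+3b$ are used to rewrite the odd-index shifts) and verifies edge-preservation orbit by orbit, split by parity. The block-quotient discussion is just motivation; the proof content matches the paper's.
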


\begin{proof}
By the comment preceding the statement of this proposition we only need to exhibit an automorphism of $\G = \WH_n(a,b,c,d)$ mapping an $A$-vertex to a $B$- or a $C$-vertex. Let $\sigma$ be the permutation of the vertex set of $\G$ defined by the rule given via the following table, where $i \in \ZZ_n$:
$$
\begin{array}{c||c|c|c}
i\, \text{mod}\, 2 & A_i\sigma & B_i\sigma & C_i\sigma \\ \hline
0 & B_i & C_{i+c} & A_i \\
1 & C_{i-a+d} & A_{i+a+b} & B_{i-b+c-d}
\end{array}
$$
Due to the assumptions on the parity of $a, b, c$ and $d$ it is clear that $\sigma$ is indeed a permutation of the vertex set of $\G$. Using the assumption that $2a = d-b$ and $2c = 3a+3b$ one can also verify that it preserves adjacencies. For instance, while it is clear that the $a$-edges $A_iA_{i+a}$ with $i$ even are mapped to edges of $\G$, one can use the assumption $d = 2a+b$ to see that also for $i$ odd the vertex $A_{i+a}\sigma = B_{i+a}$ is adjacent to $A_i\sigma = C_{i-a+d} = C_{i+a+b}$. Similarly, to see that the right edges $A_iC_i$ with $i$ odd are mapped to edges of $\G$, one needs to show that $C_{i-a+d}$ and $B_{i-b+c-d}$ are adjacent. As $d = 2a+b$ and $2c = 3a+3b$, we see that $C_{i-a+d} = C_{i+a+b}$ and $B_{i-b+c-d} = B_{i-2a-2b+2c-c} = B_{i+a+b-c}$ are indeed adjacent. That $\sigma$ preserves all other adjacencies is verified in a similar way and is left to the reader.
\end{proof}

Figure~\ref{fig:VT2} shows symmetric drawings of the graphs $\WH_4(1,3,0,1)$ and $\WH_4(1,3,2,1)$, the smallest two members of the family of vertex-transitive WH-graphs from Proposition~\ref{pro:VTfam1}. The colors of the vertices are consistent with Figure~\ref{fig:diagram}. For each of these drawings the $6$-fold rotation is an automorphism mapping an $A$-vertex to a $B$-vertex. We remark that the counterclockwise rotation corresponds to the automorphism $\sigma \rho^2$ in the case of $\WH_4(1,3,0,1)$ and to the automorphism $\sigma$ in the case of $\WH_4(1,3,2,1)$, where $\sigma$ is as in the proof of Proposition~\ref{pro:VTfam1} and $\rho$ as in Lemma~\ref{le:sym}.

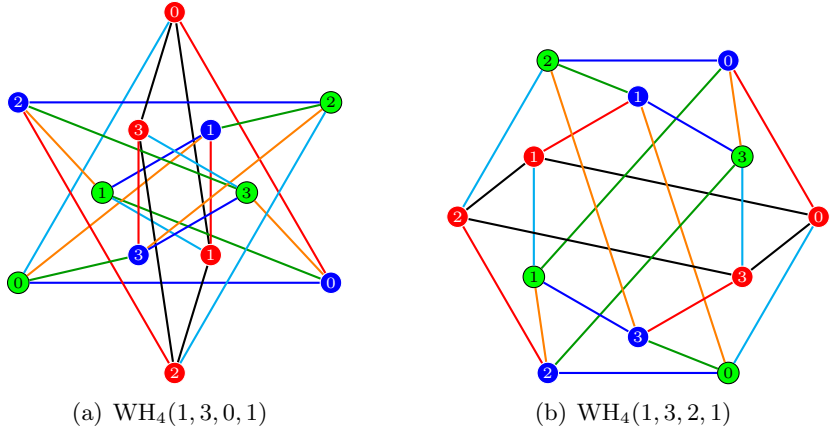
\begin{figure}[htbp]
\begin{center}
\subfigure[$\WH_4(1,3,0,1)$]{
\begin{tikzpicture}[scale = .8]




\pgfmathsetmacro{\rr}{3}
\pgfmathsetmacro{\rrr}{1.2}

\pgfmathsetmacro{\off}{90}
\node[vtx,white, fill=red] (A0) at (360*0/6+\off:\rr){0};
\node[vtx,white, fill = blue ] (B2) at (360*1/6+\off:\rr){2};
\node[vtx, , fill = green](C0) at (360*2/6+\off:\rr){0};
\node[vtx,white, fill=red] (A2) at (360*3/6+\off:\rr){2};
\node[vtx, white, fill = blue ] (B0) at (360*4/6+\off:\rr){0};
\node[vtx, , fill = green] (C2) at (360*5/6+\off:\rr){2};

\pgfmathsetmacro{\off}{-60}
\node[vtx,white, fill=red] (A1) at (360*0/6+\off:\rrr){1};
\node[vtx, , fill = green ] (C3) at (360*1/6+\off:\rrr){3};
\node[vtx, white, fill = blue](B1) at (360*2/6+\off:\rrr){1};
\node[vtx,white, fill=red] (A3) at (360*3/6+\off:\rrr){3};
\node[vtx, , fill = green ] (C1) at (360*4/6+\off:\rrr){1};
\node[vtx, white, fill = blue] (B3) at (360*5/6+\off:\rrr){3};

\foreach \j in {0,1,2,3}{
\draw[thick] let \n1 = {int(mod(\j+1, 4))} in (A\j) -- (A\n1);
\draw[thick,red] (A\j) -- (B\j);
\draw[thick, orange] let \n1 = {int(mod(\j+3, 4))} in (B\j) --  (C\n1); 
\draw[thick, blue] let \n1 = {int(mod(\j+0, 4))} in(B\j) -- (C\n1); 
\draw[thick, green!60!black] let \n1 = {int(mod(\j+1, 4))} in(B\j) -- (C\n1); 
\draw[thick,cyan] (A\j) -- (C\j);
}

\end{tikzpicture}}
\hspace{1cm}
\subfigure[$\WH_4(1,3,2,1)$]{
\begin{tikzpicture}[scale = .8]

\foreach \j in {0,2}{
\node[vtx,white, fill=red] (A\j) at (180*\j/2 : 3) {\j};
\node[vtx, white, fill = blue] (B\j) at (180*\j/2 + 60: 3){\j};
\node[vtx, , fill = green, ] (C\j) at (180*\j/2+ 60 -120: 3) {\j};
}

\foreach \j in {1,3}{
\node[vtx,white, fill=red]  (A\j) at (180*\j/2 + 120+120+180: 2) {\j};
\node[vtx, , fill = green] (C\j) at (180*\j/2 + 60+ 60: 2){\j};
\node[vtx, ,white, fill = blue, ] (B\j) at (180*\j/2+ 60 -120+ 60: 2) {\j};
}

\foreach \j in {0,1,2,3}{
\draw[thick] let \n1 = {int(mod(\j+1, 4))} in (A\j) -- (A\n1);
\draw[thick,red] (A\j) -- (B\j);
\draw[thick, orange] let \n1 = {int(mod(\j+3, 4))} in (B\j) --  (C\n1); 
\draw[thick, blue] let \n1 = {int(mod(\j+2, 4))} in(B\j) -- (C\n1); 
\draw[thick, green!60!black] let \n1 = {int(mod(\j+1, 4))} in(B\j) -- (C\n1); 
\draw[thick,cyan] (A\j) -- (C\j);
}

\end{tikzpicture}}
%
%
%

%
%
%

%
\caption{The smallest two examples of the family of WH-graphs from Proposition~\ref{pro:VTfam1}.}
\label{fig:VT2}
\end{center}
\end{figure}

\begin{proposition}
\label{pro:VTfam2}
Let $m \geq 3$ be an odd integer. Then the WH-graph $\WH_{4m}(2,m-2,0,m+2)$ is vertex-transitive.
\end{proposition}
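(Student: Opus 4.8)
The plan is to invoke the observation made just before Proposition~\ref{pro:VTfam1}: since $\la\rho,\tau\ra$ already has the $A$-vertices as a single orbit, it suffices to produce one automorphism $\sigma$ of $\G=\WH_{4m}(2,m-2,0,m+2)$ that sends some $A$-vertex to a $B$- or a $C$-vertex. Before constructing $\sigma$ one should record that the parameters are admissible in the sense of Construction~\ref{con:WHAT}: $2a=4\neq 0$ in $\ZZ_{4m}$ (as $m\geq 2$), the elements $b=m-2$, $c=0$, $d=m+2$ are pairwise distinct for odd $m\geq 3$, and since $a=2$ is even while $b=m-2$ is odd, $n,a,b,c,d$ have no common prime divisor; hence $\G$ is a connected tetravalent graph.

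The construction of $\sigma$ will exploit two structural features of $\G$ that are forced by $c=0$. First, for each $i$ the triple $T_i=\{A_i,B_i,C_i\}$ spans a triangle, and a routine check of all edge types (left, right, $a$-, $b$-, $c$- and $d$-edges) shows that these are the only triangles of $\G$; hence $\{T_i:i\in\ZZ_{4m}\}$ is a vertex partition preserved by every automorphism. Second, since $\gcd(a,4m)=2$ the $a$-edges split into two cycles of length $2m$ — those on the even-indexed and on the odd-indexed $A$-vertices — while, since $d-b=4$, the $b$-edges together with the $d$-edges split into four cycles of length $2m$, each alternating a $b$- and a $d$-edge and of the shape $(\ldots,B_j,C_{j+b},B_{j-4},C_{j+b-4},\ldots)$, one for each residue of the $B$-index modulo $4$.

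Accordingly, I would define $\sigma$ so that it carries the two $A$-cycles isomorphically onto two of these four $bd$-cycles, and then extend it to the remaining vertices by requiring that $\sigma$ maps triangles to triangles: once $\sigma$ is fixed on the $A$-vertices, the image triangle $T_{\pi(i)}$ of each $T_i$ is determined, and for every $i$ one chooses which of $B_i$, $C_i$ is sent to the $A$-vertex of $T_{\pi(i)}$. The resulting triangle permutation $\pi$ of $\ZZ_{4m}$ maps each residue class modulo $4$ bijectively onto a residue class modulo $4$, and together with the triangle bookkeeping this makes it transparent that $\sigma$ is a bijection of $V(\G)$. The delicate point is to choose the directions and the offsets of the two cycle-identifications so that $\sigma$ also preserves the left edges $A_iB_i$, the right edges $A_iC_i$, and those $b$- and $d$-edges lying on the two $bd$-cycles that are not in the image of the $A$-vertices; this requirement pins $\sigma$ down (up to the symmetries $\rho$ and $\tau$ already available), and, as in the proof of Proposition~\ref{pro:VTfam1}, it is cleanest to present $\sigma$ in the write-up as an explicit table of its values according to the residue of the index modulo $4$ — with, if needed, a secondary split according to whether $m\equiv 1$ or $m\equiv 3\pmod 4$ (equivalently whether $b\equiv 3$ or $b\equiv 1\pmod 4$). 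Once the table is in place, checking that $\sigma$ preserves each of the six $\la\rho\ra$-orbits of edges is a finite, mechanical verification done family by family, after which $\sigma(A_0)$ being a $B$-vertex yields vertex-transitivity.

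I expect the main obstacle to be precisely this last point: locating the correct offsets of the two cycle-identifications (almost every choice produces a map that fails to preserve the left/right edges or some $b$-/$d$-edges) and then carrying out the case analysis that confirms $\sigma\in\Aut(\G)$. The overall strategy, by contrast, is essentially dictated by the rigid triangle-and-cycle structure of $\G$, which leaves very little freedom in how an $A$-vertex can be moved onto a $B$- or $C$-vertex.
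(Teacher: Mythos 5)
Your reduction is the right one (the paper uses exactly the same first step: since $\la\rho,\tau\ra$ has the $A$-vertices as one of its two vertex-orbits, it suffices to exhibit a single automorphism moving an $A$-vertex to a $B$- or $C$-vertex), and your structural observations are correct: the triangles of $\WH_{4m}(2,m-2,0,m+2)$ are precisely the triples $\{A_i,B_i,C_i\}$, the $a$-edges form two cycles of length $2m$, and the $b$- and $d$-edges together form four alternating cycles of length $2m$. Moreover, your intended shape of the automorphism (send the two $A$-cycles onto two of the four $bd$-cycles and extend triangle-by-triangle, presenting the map as a table according to the index mod $4$ with a case split on $m\bmod 4$) is exactly the shape of the automorphism the paper writes down.

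The genuine gap is that you never produce the automorphism, and its existence is the entire content of the statement. Knowing that any automorphism must respect the triangle partition and must match $A$-cycles with $bd$-cycles constrains the candidate maps, but it does not show that any candidate actually preserves the left, right, $b$- and $d$-edges simultaneously; you yourself flag that ``almost every choice produces a map that fails'' and defer ``locating the correct offsets'' as the main obstacle. In the paper this step is the explicit permutation $\theta$ defined by residues mod $4$: with $\delta\in\{1,3\}$ determined by $m\equiv\delta\pmod 4$, one sets $A_i\theta=B_i$, $B_i\theta=A_i$, $C_i\theta=C_i$ for $i\equiv 0$; $A_i\theta=C_i$, $B_i\theta=B_i$, $C_i\theta=A_i$ for $i\equiv\delta$; $A_i\theta=C_{i+m}$, $B_i\theta=A_{i+m}$, $C_i\theta=B_{i+m}$ for $i\equiv 2$; and $A_i\theta=B_{i-m}$, $B_i\theta=C_{i-m}$, $C_i\theta=A_{i-m}$ for $i\equiv 4-\delta$, followed by the (short but necessary) check, using $c=0$, $a=2$ and $m-2\equiv m+2\equiv 4-\delta\pmod 4$, that all six $\la\rho\ra$-orbits of edges are preserved. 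Without an explicit table of this kind and its verification, your argument establishes only that a putative automorphism is tightly constrained, not that one exists, so the proposal as written does not prove vertex-transitivity.
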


\begin{proof}
As in the previous proof, we only need to show that there exists an automorphism of $\G = \WH_{4m}(2,m-2,0,m+2)$ which maps an $A$-vertex to a $B$- or a $C$-vertex. Since $m$ is odd, there is a unique $\delta \in \{1,3\}$ such that $m \equiv \delta \pmod{4}$. Let $\theta$ be the permutation of the vertex set of $\G$ defined by the rule given via the following table, where $i \in \ZZ_n$:
$$
\begin{array}{c||c|c|c}
i\, \text{mod}\, 4 & A_i\theta & B_i\theta & C_i\theta \\ \hline
0 & B_i & A_i & C_i \\
\delta & C_i & B_i & A_i \\
2 & C_{i+m} & A_{i+m} & B_{i+m} \\
4-\delta & B_{i-m} & C_{i-m} & A_{i-m}
\end{array}
$$
Using the fact that $m$ is odd and $m \equiv \delta \pmod{4}$, it is easy to verify that $\theta$ is indeed a permutation of the vertex set of $\G$. Since $c = 0$, it is clear that $\theta$ maps all of the $c$-edges, the left edges and the right edges of $\G$ to edges of $\G$. As $a = 2$ it is also easy to verify that the $a$-edges of $\G$ are all mapped to edges of $\G$. To see that the $b$- and the $d$-edges are also mapped to edges of $\G$ observe that $m-2 \equiv m+2 \equiv 4-\delta \pmod{4}$. It is now not difficult to verify that the $b$- and the $d$-edges are mapped to edges of $\G$. For instance, in the case of $i \equiv 0 \pmod{4}$, we see that $B_i \theta = A_i$ is indeed adjacent to $C_{i+m-2}\theta = A_{i+m-2-m} = A_{i-2}$ and $C_{i+m+2}\theta = A_{i+m+2-m} = A_{i+2}$. We leave the remaining three cases to the reader. 
\end{proof}

It turns out that $\WH_8(2,1,0,5)$ and $\WH_8(2,1,4,5)$ are both vertex-transitive and are nonisomorphic. A symmetric drawing of each of them which is given in Figure~\ref{fig:sporadic} shows that the 6-fold rotation is an automorphism mapping an $A$-vertex to a $B$-vertex. Clearly, these two graphs belong to neither of the two families from Proposition~\ref{pro:VTfam1} and Proposition~\ref{pro:VTfam2}. 
Our goal in the remainder of this section is to prove that each vertex-transitive WH-graph which is not isomorphic to one of these two sporadic small graphs is isomorphic to a WH-graph from one of the two infinite families given by Proposition~\ref{pro:VTfam1} and Proposition~\ref{pro:VTfam2}.

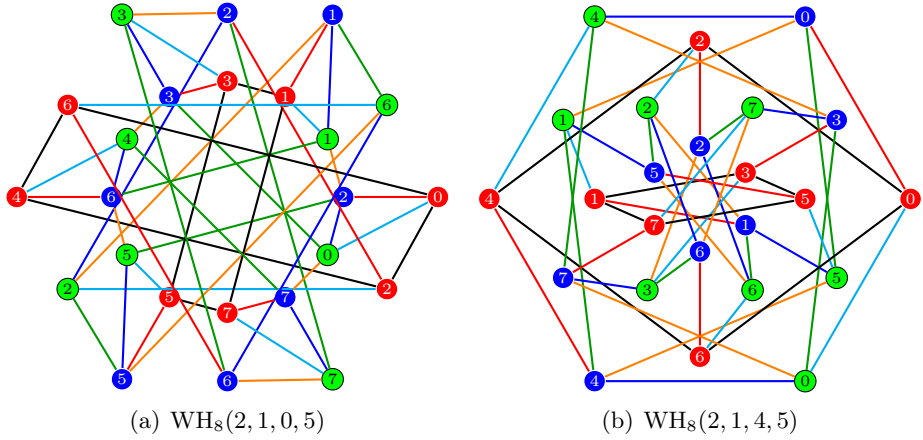
\begin{figure}[htbp]
\begin{center}
\subfigure[$\WH_{8}(2,1,0,5)$]{
\begin{tikzpicture}[scale= .7]
 \pgfmathsetmacro{\r}{0+60}
  \pgfmathsetmacro{\rr}{90-120}
   \pgfmathsetmacro{\rrr}{30+60}
   
  \pgfmathsetmacro{\s}{2.2}
    \pgfmathsetmacro{\ss}{3.5} 
       \pgfmathsetmacro{\sss}{2.2}  

\node[vtx, white, fill = red] (A0) at (360*0/6  :4) {0};
\node[vtx, white, fill = blue] (B1) at (360*1/6  :4){1};
\node[vtx, , fill = green] (C3) at (360*2/6  :4) {3};
\node[vtx, white, fill = red] (A4) at (360*3/6 :4){4};
\node[vtx, white, fill = blue] (B5) at (360*4/6  :4){5};
\node[vtx, , fill = green] (C7) at (360*5/6 :4){7};

\node[vtx, white, fill = red] (A1) at (360*0/6 +\r  :\s){1};
\node[vtx, white, fill = blue] (B3) at (360*1/6+\r :\s) {3};
\node[vtx, white, fill = blue] (B4) at (360*2/6 +\r :\s) {6};
\node[vtx, white, fill = red] (A5) at (360*3/6+\r :\s){5};
\node[vtx, white, fill = blue] (B7) at (360*4/6 +\r :\s) {7};
\node[vtx, white, fill = blue] (B0) at (360*5/6 +\r:\s) {2};

\node[vtx, white, fill = red] (A2) at (360*0/6+ \rr :\ss) {2};
\node[vtx, , fill = green] (C6) at (360*1/6  + \rr :\ss) {6};
\node[vtx, white, fill = blue] (B2) at (360*2/6 + \rr :\ss) {2};
\node[vtx, white, fill = red] (A6) at (360*3/6 + \rr:\ss) {6};
\node[vtx, , fill = green] (C2) at (360*4/6  + \rr :\ss) {2};
\node[vtx, white, fill = blue] (B6) at (360*5/6 + \rr :\ss) {6};

\node[vtx, white, fill = red] (A3) at (360*0/6 +\rrr :\sss) {3};
\node[vtx, , fill = green] (C4) at (360*1/6 +\rrr  :\sss) {4};
\node[vtx, , fill = green] (C5) at (360*2/6 +\rrr : \sss) {5};
\node[vtx, white, fill = red] (A7) at (360*3/6 +\rrr :\sss) {7};
\node[vtx,  , fill = green] (C0) at (360*4/6  +\rrr :\sss) {0};
\node[vtx, , fill = green] (C1) at (360*5/6 +\rrr :\sss) {1};

\foreach \j in {0,1,...,7}{
\draw[thick] let \n1 = {int(mod(\j+2, 8))} in (A\j) -- (A\n1);
\draw[thick,red] (A\j) -- (B\j);
\draw[thick, orange] let \n1 = {int(mod(\j+1, 8))} in (B\j) --  (C\n1); 
\draw[thick, blue] let \n1 = {int(mod(\j+0, 8))} in(B\j) -- (C\n1); 
\draw[thick, green!60!black] let \n1 = {int(mod(\j+5, 8))} in(B\j) -- (C\n1); 
\draw[thick,cyan] (A\j) -- (C\j);
}

\end{tikzpicture}
}
\subfigure[$\WH_{8}(2,1,4,5)$]{
\begin{tikzpicture}[scale= .7]
 \pgfmathtruncatemacro{\r}{180}
  \pgfmathtruncatemacro{\rr}{90}
   \pgfmathtruncatemacro{\rrr}{30}
   
  \pgfmathsetmacro{\s}{2}
    \pgfmathsetmacro{\ss}{3} 
       \pgfmathsetmacro{\sss}{1}    

\node[vtx, white, fill = red] (A0) at (360*0/6  :4) {0};
\node[vtx, white, fill = red] (A4) at (360*3/6  :4){4};
\node[vtx, white, fill = blue] (B0) at (360*1/6  :4) {0};
\node[vtx, white, fill = blue] (B4) at (360*4/6 :4){4};
\node[vtx, , fill = green] (C4) at (360*2/6  :4){4};
\node[vtx, , fill = green] (C0) at (360*5/6 :4){0};

\node[vtx, white, fill = red] (A1) at (360*0/6 +\r  :\s){1};
\node[vtx, , fill = green] (C3) at (360*1/6+\r :\s) {3};
\node[vtx, , fill = green] (C6) at (360*2/6 +\r :\s) {6};
\node[vtx, white, fill = red] (A5) at (360*3/6+\r :\s){5};
\node[vtx, , fill = green] (C7) at (360*4/6 +\r :\s) {7};
\node[vtx, , fill = green] (C2) at (360*5/6 +\r:\s) {2};

\node[vtx, white, fill = red] (A2) at (360*0/6+ \rr :\ss) {2};
\node[vtx, , fill = green] (C1) at (360*1/6  + \rr :\ss) {1};
\node[vtx, white, fill = blue] (B7) at (360*2/6 + \rr :\ss) {7};
\node[vtx, white, fill = red] (A6) at (360*3/6 + \rr:\ss) {6};
\node[vtx, , fill = green] (C5) at (360*4/6  + \rr :\ss) {5};
\node[vtx, white, fill = blue] (B3) at (360*5/6 + \rr :\ss) {3};

\node[vtx, white, fill = red] (A3) at (360*0/6 +\rrr :\sss) {3};
\node[vtx, white, fill = blue] (B2) at (360*1/6 +\rrr  :\sss) {2};
\node[vtx, white, fill = blue] (B5) at (360*2/6 +\rrr : \sss) {5};
\node[vtx, white, fill = red] (A7) at (360*3/6 +\rrr :\sss) {7};
\node[vtx, white , fill = blue] (B6) at (360*4/6  +\rrr :\sss) {6};
\node[vtx, white, fill = blue] (B1) at (360*5/6 +\rrr :\sss) {1};

\foreach \j in {0,1,...,7}{
\draw[thick] let \n1 = {int(mod(\j+2, 8))} in (A\j) -- (A\n1);
\draw[thick,red] (A\j) -- (B\j);
\draw[thick, orange] let \n1 = {int(mod(\j+1, 8))} in (B\j) --  (C\n1); 
\draw[thick, blue] let \n1 = {int(mod(\j+4, 8))} in(B\j) -- (C\n1); 
\draw[thick, green!60!black] let \n1 = {int(mod(\j+5, 8))} in(B\j) -- (C\n1); 
\draw[thick,cyan] (A\j) -- (C\j);
}

\end{tikzpicture}
}
\caption{The two sporadic vertex-transitive WH-graphs.}
\label{fig:sporadic}
\end{center}
\end{figure}

We start by introducing some further terminology and by making a few useful observations, which we record in Proposition~\ref{pro:VT_basic} for ease of reference.
Let $\G = \WH_n(a,b,c,d)$ be a vertex-transitive WH-graph and let $H = \la \rho, \tau \ra$, where $\rho$ and $\tau$ are as in Lemma~\ref{le:sym}. Since $\G$ is vertex-transitive, Theorem~\ref{the:ET} implies that the four edges incident to $A_0$ are not in the same $\Aut(\G)$-orbit. As these four edges come from two $H$-orbits, $\Aut(\G)$ has precisely two orbits on the edge set of $\G$ and each vertex is incident to two edges from each of these two orbits. Let $\cR$ be the $\Aut(\G)$-orbit of the edge $A_0A_a$. Applying $\rho^{-a}$ we see that $A_{-a}A_0 \in \cR$, and so $A_0A_a$ and $A_{-a}A_0$ are the two edges from $\cR$ incident to $A_0$. Letting $\cB$ be the $\Aut(\G)$-orbit of the edge $A_0B_0$ we thus see that $\cR$ and $\cB$ are the two $\Aut(\G)$-orbits on the edge set of $\G$. We call the edges from $\cR$ {\em red} and the ones from $\cB$ {\em blue}. Since each vertex is incident to two red and to two blue edges this gives rise to {\em red cycles} and {\em blue cycles}. 

The red cycle containing the red edge $A_0A_a$ is $(A_0,A_a,A_{2a},\ldots,A_{-a})$ and is thus of length $n/\gcd(a,n)$. Since $A_0B_0$ is blue, there is precisely one $x \in \{b,c,d\}$ such that $B_0C_x$ is also blue. With no loss of generality assume $B_0C_c$ is blue. Applying $\rho^c$ we thus see that the blue cycles are of the form $(A_i,B_i,C_{i+c},A_{i+c},B_{i+c},C_{i+2c},A_{i+2c},\ldots,C_{i-c})$, $i \in \ZZ_n$. It follows that the red cycle through $B_0$ is $(B_0,C_d,B_{d-b},C_{2d-b},B_{2(d-b)},\ldots , C_{b})$ and is thus of length $2n/\gcd(d-b,n)$.  Since all red cycles must be of the same length, we must have that $2\gcd(a,n) = \gcd(d-b,n)$. In particular, $n$ and the order $|a|$ of $a$ are both even. Our agreement regarding the colors of edges can be seen in Figure~\ref{fig:colors} where the choice of colors is represented via the Wooly Hat diagram and is shown in the case of the WH-graph $\WH_4(1,3,0,1)$.

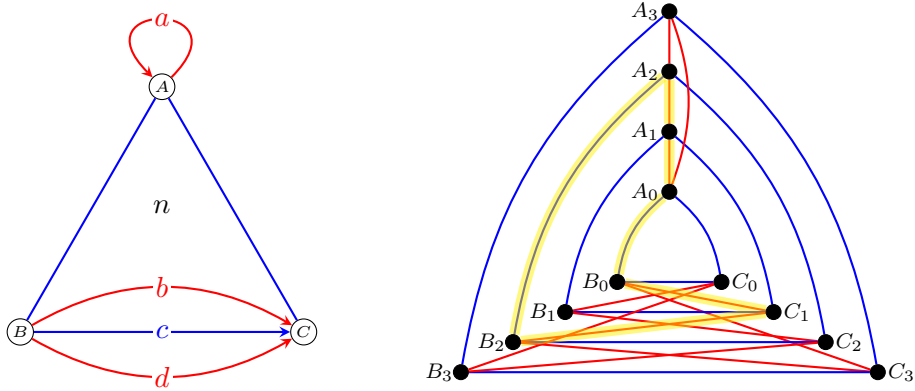
\begin{figure}[htbp]
\begin{center}
\subfigure
{
\begin{tikzpicture}[scale = 1.45]
\node[vtx] (A) at (90: 1.5) {$A$};
\node[vtx] (B) at (90+120: 1.5) {$B$};
\node[vtx] (C) at (90+240: 1.5) {$C$};
\draw[->, thick, red] (A) to[in = 90+45, out = 45,looseness = 3, distance = 1 cm] node[lbl]{$a$} 
(A);
\draw[thick, blue] (A) -- (B);
\draw[thick,->, red] (B) to[bend left = 30] node[lbl] {$b$}
(C); 
\draw[thick, ->, blue] (B) to[bend left = 0] node[lbl] {$c$}
(C); 
\draw[thick,->, red] (B) to[bend left = -30] node[lbl] {$d$}
(C); 
\draw[thick, blue] (A) -- (C);
\node[ below = 12mm of A] {$n$};
\end{tikzpicture}
}
\hspace{8mm}
\subfigure{
\begin{tikzpicture}[scale = .8]
\foreach \j in {0,1,2,3}{
 \pgfmathtruncatemacro{\rad}{\j+1};
\node[vtx, fill=black, inner sep = 2pt, label={[label distance=-3pt]left:{\scriptsize $A_{\j}$}}] (A\j) at (360*0/4+90:\rad) {};
\node[vtx, fill = black, inner sep = 2pt, label={[label distance=-4pt]180:{\scriptsize $B_{\j}$}}]  (B\j) at (360*1/3+90: \rad){}; 
\node[vtx, fill = black, inner sep = 2pt, label={[label distance=-3pt]0:{\scriptsize $C_{\j}$}}]  (C\j) at (360*2/3+90: \rad) {};
}
\begin{scope}[on background layer]
\foreach \j in {0,1,2,3}{
\draw[thick, blue] (A\j) to[bend right = 20] (B\j);
\draw[ thick, blue] let \n1 = {int(mod(\j+0, 4))} in (B\j) to[bend right = 0] (C\n1); 
\draw[thick, red] let \n1 = {int(mod(\j+1, 4))} in(B\j) -- (C\n1); 
\draw[ thick, red] let \n1 = {int(mod(\j+3, 4))} in(B\j) -- (C\n1); 
\draw[thick, blue] (A\j) to[bend left = 20](C\j);
}
\foreach \j in {0,1,2}{
\draw[ thick, red] let \n1 = {int(mod(\j+1, 4))} in (A\j) -- (A\n1);
}
\draw[ thick, red] (A3) to[bend left = 20] (A0);
\draw[opacity = .5, yellow, line width = 4 pt] (A0) -- (A1) -- (A2);
\draw[opacity = .5, yellow, line width = 4 pt] (B2) -- (C1) -- (B0);
\draw[opacity = .5, yellow, line width = 4 pt] (A0) to [bend right = 20] (B0);
\draw[opacity = .5, yellow, line width = 4 pt] (A2) to [bend right = 20] (B2);
\end{scope}
\end{tikzpicture}
}
\caption{The red and blue edges and a basic $6$-cycle in $\WH_4(1,3,0,1)$.}
\label{fig:colors}
\end{center}
\end{figure}

Recall that each WH-graph possesses canonical $6$-cycles and note that each canonical $6$-cycle consists of four red and two blue edges where the blue edges are antipodal on this cycle. We call all $6$-cycles consisting of four red and two blue edges where the blue edges are antipodal on this cycle {\em basic}. Since $\G$ is vertex-transitive, there exists a basic $6$-cycle containing the red $2$-path $P = (A_0,A_a,A_{2a})$. Since no red $2$-path can connect a $B$-vertex to a $C$-vertex it follows that the two blue edges of any basic $6$-cycle containing $P$ are either both left edges or are both right edges. It is thus clear that one of $2a = d-b$ or $2a = b-d$ must hold. With no loss of generality we assume that $2a = d-b$ holds. An example of a basic $6$-cycle in the graph $\WH_4(1,3,0,1)$ is highlighted in Figure~\ref{fig:colors}. 

Suppose one of $b$ and $d$ is $0$, say $b = 0$. Then the red edge $B_0C_b$ lies on a $3$-cycle containing two blue edges. Since $\cR$ is an $\Aut(\G)$-orbit the same should hold for the red edge $B_0C_d$, clearly forcing $d = 0$ and thus contradicting the fact that $d \neq b$. Therefore, $b$ and $d$ must both be nonzero. 

We finally look at the intersections of blue and red cycles. The intersection of the blue and the red cycle containing the vertex $A_0$ is
\begin{equation}
\label{eq:int1}
	\{A_i \colon i \in \la a \ra \cap \la c \ra\}.
\end{equation}
On the other hand, taking into account that $d-b = 2a$, the intersection of the blue and the red cycle containing the vertex $B_0$ is 
$$
	\{B_i \colon i \in \la 2a \ra \cap \la c \ra\} \cup \{C_i \colon i \in (b + \la 2a \ra) \cap \la c \ra\}. 
$$
Note that the distance on the corresponding blue cycle between any two vertices from~\eqref{eq:int1} is a multiple of $3$. However, the distance on the corresponding blue cycle between $B_0$ and any potential vertex from $\{C_i \colon i \in (b + \la 2a \ra) \cap \la c \ra\}$ is never a multiple of $3$. By vertex-transitivity we thus get
\begin{equation}
\label{eq:int2}
	(b + \la 2a \ra) \cap \la c \ra = \emptyset\quad \text{and}\quad \la a \ra \cap \la c \ra = \la 2a \ra \cap \la c \ra.
\end{equation}
This shows that $\lcm(\gcd(a,n),\gcd(c,n)) = \lcm(\gcd(2a,n),\gcd(c,n))$. Since the order of $a$ is even, $\gcd(2a,n) = 2\gcd(a,n)$, and so $\gcd(c,n)$ is even. In fact, if we define the {\em $2$-part} of a positive integer $m$ to be $2^k$, where $k \geq 0$ is the largest integer such that $2^k$ divides $m$, we have thus shown that the $2$-part of $\gcd(c,n)$ is larger than the $2$-part of $\gcd(a,n)$. We record all of the above observations for ease of future reference. 

\begin{proposition}
\label{pro:VT_basic}
Let $\G = \WH_n(a,b,c,d)$ be a vertex-transitive WH-graph. Then, changing the roles of $b$, $c$ and $d$ if necessary, the following hold:
\begin{itemize}
\itemsep = 0pt
\item $n$ is even.
\item $b$ and $d$ are both nonzero.
\item $2a = d-b$.
\item The $2$-part of $\gcd(c,n)$ is larger than the $2$-part of $\gcd(a,n)$. In particular, $c$ is even.
\item The two $\Aut(\G)$-orbits on the edge set of $\G$ are $\cB$ and $\cR$, where $\cB$ consists of all the left, the right and the $c$-edges, while $\cR$ consists of all the $a$-, the $b$- and the $d$-edges.  
\end{itemize}
\end{proposition}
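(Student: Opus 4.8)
The plan is essentially to collect the observations made in the discussion preceding the statement into a single argument; here I sketch how they fit together and where the work lies. To begin with, I would establish the last bullet. Since $\G$ is vertex-transitive, Theorem~\ref{the:ET} says $\G$ is not edge-transitive; combined with the automorphism $\tau$ of Lemma~\ref{le:sym}, which fixes the set of $a$-edges and interchanges the left edge $A_0B_0$ with the right edge $A_0C_0$, the four edges at $A_0$ already lie in exactly two $\langle\rho,\tau\rangle$-orbits. As $\langle\rho,\tau\rangle\le\Aut(\G)$ and $\Aut(\G)$ has at least two edge-orbits, this forces $\Aut(\G)$ to have precisely two edge-orbits: $\cR$, the orbit of $A_0A_a$ (which, using $\rho$, contains all $a$-edges), and $\cB$, the orbit of $A_0B_0$ (which then also contains $A_0C_0$, hence all left and right edges), with two edges of each colour at every vertex. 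Relabelling $b,c,d$ so that $B_0C_c$ is the second blue edge at $B_0$ then makes $\cB$ consist of the left, right and $c$-edges and $\cR$ of the $a$-, $b$- and $d$-edges.

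Next I would exploit the cycle structure. The red edges decompose $\G$ into disjoint red cycles and similarly the blue edges into blue cycles; writing the blue cycle through $A_0$ as $(\ldots,A_i,B_i,C_{i+c},A_{i+c},\ldots)$ one sees the red cycle through $A_0A_a$ has length $n/\gcd(a,n)$ while the red cycle through $B_0$ is $(B_0,C_d,B_{d-b},C_{2d-b},\ldots)$, of length $2n/\gcd(d-b,n)$. Since all red cycles have equal length we get $\gcd(d-b,n)=2\gcd(a,n)$, whence $n$ and the order of $a$ are both even (first bullet). For the third bullet I would observe that the canonical $6$-cycles are \emph{basic} — four red and two blue edges with the blue pair antipodal — and that $\Aut(\G)$ permutes basic $6$-cycles because it preserves the two colour classes; since an $A$-vertex lies on a unique red $2$-path, vertex-transitivity gives a basic $6$-cycle through $(A_0,A_a,A_{2a})$, and as no red $2$-path joins a $B$-vertex to a $C$-vertex its two blue edges are both left edges or both right edges, forcing $2a=d-b$ or $2a=b-d$; by Lemma~\ref{le:iso} we may take $2a=d-b$. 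The second bullet follows by contradiction: if $b=0$, then the red edge $B_0C_0$ lies on the triangle $B_0C_0A_0$ (two blue sides), so by transitivity on red edges the red edge $B_0C_d$ also lies on a triangle, which by the adjacency rules of Construction~\ref{con:WHAT} forces $d=0$, contradicting $b\ne d$.

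Finally, for the fourth bullet I would compare the intersection of the red and blue cycles through $A_0$, namely $\{A_i:i\in\langle a\rangle\cap\langle c\rangle\}$, with the intersection of the red and blue cycles through $B_0$, which (using $d-b=2a$) equals $\{B_i:i\in\langle 2a\rangle\cap\langle c\rangle\}\cup\{C_i:i\in(b+\langle 2a\rangle)\cap\langle c\rangle\}$. Along a blue cycle the $A$-, $B$-, $C$-vertices recur with period $3$, so all pairwise distances inside the first intersection are divisible by $3$, while the distance from $B_0$ to any $C$-vertex in the second intersection is not; vertex-transitivity therefore yields $(b+\langle 2a\rangle)\cap\langle c\rangle=\emptyset$ and $\langle a\rangle\cap\langle c\rangle=\langle 2a\rangle\cap\langle c\rangle$. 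Since $\langle a\rangle\cap\langle c\rangle=\langle\lcm(\gcd(a,n),\gcd(c,n))\rangle$ as subgroups of $\ZZ_n$, and since the order of $a$ being even gives $\gcd(2a,n)=2\gcd(a,n)$, comparing $2$-adic valuations in $\lcm(\gcd(a,n),\gcd(c,n))=\lcm(2\gcd(a,n),\gcd(c,n))$ shows the $2$-part of $\gcd(c,n)$ strictly exceeds that of $\gcd(a,n)$, so in particular $c$ is even.

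I expect the basic-$6$-cycle step to be the main obstacle: the delicate point is to justify that a basic $6$-cycle passes through the specific red $2$-path $(A_0,A_a,A_{2a})$, and then to enumerate the two configurations of its blue edges, using that canonical $6$-cycles are basic, that automorphisms preserve basicness, and that an $A$-vertex has exactly one red $2$-path through it. The number-theoretic bookkeeping in the last step — tracking the three vertex types along a blue cycle and extracting $2$-parts — is more routine but still needs some care.
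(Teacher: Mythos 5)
Your proposal is correct and follows essentially the same route as the paper: two edge-orbits giving the red/blue colouring, equal red-cycle lengths yielding $\gcd(d-b,n)=2\gcd(a,n)$ (hence $n$ and the order of $a$ even), a basic $6$-cycle through the red $2$-path at an $A$-vertex forcing $2a=\pm(d-b)$, the triangle argument for $b,d\neq 0$, and the mod-$3$ distance comparison of red--blue cycle intersections giving $(b+\langle 2a\rangle)\cap\langle c\rangle=\emptyset$ and the $2$-part inequality. The only cosmetic point is that the normalization $2a=d-b$ comes from $\WH_n(a,b,c,d)=\WH_n(-a,b,c,d)$ (or simply swapping the roles of $b$ and $d$) rather than from Lemma~\ref{le:iso}, but this does not affect the argument.
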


Our classification of the vertex-transitive WH-graphs relies heavily on the analysis of certain structures in these graphs in relation to the two colors of edges. In particular, the following notion will play an important role. We say that two vertices of a WH-graph are {\em basically $\cR$-antipodal} if there exists a basic $6$-cycle on which each of them is incident to two red edges (and so they are antipodal on this cycle). We also say that a path, a walk or a cycle is {\em red} ({\em blue}, respectively) if all of its edges are red (blue, respectively) and is {\em alternating} if no two consecutive edges of it are of the same color. 

Before proceeding with our analysis a remark is in order. Observe that the above result ensures that for a vertex-transitive $\G = \WH_n(a,b,c,d)$ the order of $a$ is even. Therefore, $d-b = 2a$ implies that the orbits of the subgroup $\la \rho^{2a}\ra$, where $\rho$ is as in~\eqref{eq:rho}, are blocks of imprimitivity for $\Aut(\G)$ (each of them coincides with a set consisting of every other vertex of a red cycle). One could thus consider the quotient graph with respect to these blocks (where we regard the ``double'' red edge as a single edge) and obtain a cubic graph on which the induced action of $\Aut(\G)$ is vertex-transitive. This graph is of course a tricirculant (in the sense that it admits a semiregular automorphism with three orbits), and so one could use the results of~\cite{PotTol20} where all cubic vertex-transitive tricirculants were classified. Nevertheless, the relevant result, namely~\cite[Theorem~4.2]{PotTol20}, only classifies the graphs up to isomorphism, its proof is very long and technical (about 11 pages), and even so one would still need to carefully examine what the conclusions of that result on our quotient graph say about the original parameters $n$, $a$, $b$, $c$ and $d$. Especially so since the quotient might (and in fact quite often is) be one of the sporadic small graphs of orders $6$ (the prism) and $12$ (the truncation of $K_4$) from~\cite[Table~1]{PotTol20}. This is why we have decided to take a direct approach without referring to the results of~\cite{PotTol20}. 

We start our analysis with a useful observation regarding red $2$-paths and basic $6$-cycles.

\begin{lemma}
\label{le:basic2R}
Let $\G = \WH_n(a,b,c,d)$ be a vertex-transitive WH-graph where the parameters $n$, $a$, $b$, $c$ and $d$ are as in Proposition~\ref{pro:VT_basic}. Then the following hold:
\begin{itemize}
\itemsep = 0pt
\item If $4a \neq 0$ then each red $2$-path lies on precisely two basic $6$-cycles.
\item If $4a = 0$ then each red $2$-path lies on precisely four basic $6$-cycles.
\end{itemize}
\end{lemma}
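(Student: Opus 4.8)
The plan is to count directly, for each red $2$-path of $\G$, the basic $6$-cycles through it, after reducing to two representative $2$-paths. The key preliminary observation is that both $\rho$ and $\tau$ of Lemma~\ref{le:sym} preserve the partition of the edge set into red and blue edges (this is immediate from the description of $\cR$ and $\cB$ in Proposition~\ref{pro:VT_basic}), and hence map red $2$-paths to red $2$-paths and basic $6$-cycles to basic $6$-cycles. Since at every vertex exactly two of the four incident edges are red, a red $2$-path is determined by its central vertex; thus, up to the action of $\la \rho \ra$, there are exactly three red $2$-paths, namely $P_A = (A_{-a},A_0,A_a)$, $P_B = (C_b,B_0,C_d)$ and $P_C = (B_{-b},C_0,B_{-d})$, and $\tau$ maps $P_C$ to $P_B$. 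It therefore suffices to count basic $6$-cycles through $P_A$ and through $P_B$.

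Next I would pin down the shape of a basic $6$-cycle $Z = (x_0,x_1,x_2,x_3,x_4,x_5)$ whose first three vertices form a given red $2$-path $P = (x_0,x_1,x_2)$. Since the two blue edges of $Z$ are antipodal while $x_0x_1$ and $x_1x_2$ are red, the blue edges must be $x_2x_3$ and $x_5x_0$; hence $x_3$ is a blue neighbour of $x_2$, $x_5$ is a blue neighbour of $x_0$, and $x_4$ is a common red neighbour of $x_3$ and $x_5$, and conversely each such choice yields a unique such $Z$. For $P = P_A$ the blue neighbours of $A_{\pm a}$ are $B_{\pm a}$ and $C_{\pm a}$, and since the red neighbours of a $B$-vertex are $C$-vertices while those of a $C$-vertex are $B$-vertices, $x_3$ and $x_5$ must be of the same type; so $(x_3,x_5) = (B_a,B_{-a})$ or $(C_a,C_{-a})$. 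For $P = P_B$ the blue neighbours of $C_d$ are $A_d$ and $B_{d-c}$ and those of $C_b$ are $A_b$ and $B_{b-c}$, and the same type-matching argument forces $(x_3,x_5) = (A_d,A_b)$ or $(B_{d-c},B_{b-c})$. In each of these four situations $x_4$ ranges over the intersection of two explicit two-element sets of vertices.

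The core of the argument is then a short computation of these four intersections using the relation $2a = d - b$ from Proposition~\ref{pro:VT_basic}: in every one of the four situations this relation forces exactly one common vertex (for instance, for $(x_3,x_5) = (B_a,B_{-a})$ it identifies $C_{a+b}$ with $C_{-a+d}$), and a second common vertex appears if and only if $2a = b - d$, which holds precisely when $4a = 0$. Consequently each of the two situations associated with $P_A$ (and likewise each of the two associated with $P_B$) contributes one basic $6$-cycle when $4a \neq 0$ and two when $4a = 0$, giving the asserted totals of $2$ and $4$.

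The one point needing care -- and the place where I expect a careless argument could fail -- is checking that each candidate closed walk really is a $6$-cycle, i.e.\ has six pairwise distinct vertices. For the ``first'' $6$-cycle in each situation this follows at once from $2a \neq 0$ and the fact that $b$, $c$ and $d$ are pairwise distinct. For the ``second'' $6$-cycle, which occurs only when $4a = 0$, one must in addition rule out coincidences such as $x_4 = x_0$ or $x_4 = x_2$; here I would use that $4a = 0$ together with $2a \neq 0$ forces the order of $a$ to be exactly $4$, so that $3a \neq 0$, which combined with $c \neq b$ eliminates all such degeneracies. Once these verifications are in place, the two cases follow by summing the four situations.
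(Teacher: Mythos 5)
Your proof is correct and follows essentially the same route as the paper's: for a representative red $2$-path one locates the two (antipodal) blue edges, enumerates the possible blue neighbours at the two endvertices, and uses $2a=d-b$ to find one common red neighbour always and a second one exactly when $4a=0$. The only difference is that the paper treats just the single representative $(B_0,C_d,B_{d-b})$, letting vertex-transitivity (which forces all red $2$-paths into one $\Aut(\G)$-orbit, since a red $2$-path is determined by its central vertex) cover the rest, whereas you check the $A$-centred and $B$-centred paths separately and also verify non-degeneracy of the extra cycles explicitly -- a slightly more detailed but equivalent argument.
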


\begin{proof}
Let us consider the basic $6$-cycles containing the red $2$-path $(B_0,C_d,B_{d-b})$. Clearly, if at least one of its two blue edges is a left edge (and so there are also two $a$-edges on this cycle) then both are left edges. If this is indeed the case, then the $6$-cycle must contain $(A_0,B_0,C_d,B_{d-b},A_{d-b})$. Since $d-b = 2a$, one of the possibilities for the remaining vertex of this basic $6$-cycle is $A_a$. The only other possibility is $A_{-a}$ which gives rise to a $6$-cycle if and only if $-2a = 2a$ (which is equivalent to $4a = 0$). A similar situation occurs if both blue edges are $c$-edges, where we seek for red $2$-paths connecting $C_c$ and $C_{d-b+c}$. One possibility results in a canonical $6$-cycle while the other is only possible if $2(d-b) = 0$, which is of course equivalent to $4a = 0$.
\end{proof}

Let $\G = \WH_n(a,b,c,d)$ be a vertex-transitive WH-graph, where $a,b,c,d$ are as in Proposition~\ref{pro:VT_basic}. Consider the part of $\G$ depicted on Figure~\ref{fig:alt8_1}.
\begin{figure}[htbp]
\begin{center}
\begin{tikzpicture}[line cap=round,line join=round,>=triangle 45,x=.9cm,y=.9cm, scale = 0.85]
\clip(1,3) rectangle (15,7);
\draw [line width=1pt,color=red] (3,5)-- (4,6);
\draw [line width=1pt,color=blue] (4,6)-- (6,6);
\draw [line width=1pt,color=red] (6,6)-- (7,5);
\draw [line width=1pt,color=red] (7,5)-- (6,4);
\draw [line width=1pt,color=blue] (6,4)-- (4,4);
\draw [line width=1pt,color=red] (4,4)-- (3,5);
\draw [line width=1pt,color=blue] (7,5)-- (9,5);
\draw [line width=1pt,color=red] (9,5)-- (10,6);
\draw [line width=1pt,color=blue] (10,6)-- (12,6);
\draw [line width=1pt,color=red] (12,6)-- (13,5);
\draw [line width=1pt,color=red] (13,5)-- (12,4);
\draw [line width=1pt,color=blue] (12,4)-- (10,4);
\draw [line width=1pt,color=red] (10,4)-- (9,5);
\draw [color=blue,shift={(8,-12)},line width=1pt]  plot[domain=1.2847448850775784:1.856847768512215,variable=\t]({1*17.720045146669353*cos(\t r)+0*17.720045146669353*sin(\t r)},{0*17.720045146669353*cos(\t r)+1*17.720045146669353*sin(\t r)});
\draw (3.570031197790194,6.7) node[anchor=north west] {{\small $B_0$}};
\draw (5.497809785201585,6.7) node[anchor=north west] {{\small $A_0$}};
\draw (9.238275701074432,6.7) node[anchor=north west] {{\small $C_{a+b}$}};
\draw (11.482555250598143,6.7) node[anchor=north west] {{\small $A_{a+b}$}};

\draw (6.70626800417589,4.8) node[anchor=north west] {{\small $A_a$}};
\draw (8.5,4.8) node[anchor=north west] {{\small $B_a$}};

\draw (9.612322292661718,3.8) node[anchor=north west] {{\small $C_{3a+b}$}};
\draw (11.396236806385692,3.8) node[anchor=north west] {{\small $A_{3a+b}$}};
\draw (5.612901044151519,3.8) node[anchor=north west] {{\small $A_{2a}$}};
\draw (3.45493993884026,3.8) node[anchor=north west] {{\small $B_{2a}$}};
\draw (13,5) node[anchor=north west] {{\small $A_{2a+b}$}};
\draw (1.7,5) node[anchor=north west] {{\small $C_{2a+b}$}};
\begin{scriptsize}
\draw [fill=black] (4,4) circle (2.5pt);
\draw [fill=black] (6,4) circle (2.5pt);
\draw [fill=black] (7,5) circle (2.5pt);
\draw [fill=black] (3,5) circle (2.5pt);
\draw [fill=black] (4,6) circle (2.5pt);
\draw [fill=black] (6,6) circle (2.5pt);
\draw [fill=black] (9,5) circle (2.5pt);
\draw [fill=black] (10,6) circle (2.5pt);
\draw [fill=black] (12,6) circle (2.5pt);
\draw [fill=black] (10,4) circle (2.5pt);
\draw [fill=black] (12,4) circle (2.5pt);
\draw [fill=black] (13,5) circle (2.5pt);
\end{scriptsize}
\end{tikzpicture}
\caption{The situation around the blue edge $A_aB_a$.}
	\label{fig:alt8_1}

\label{default}
\end{center}
\end{figure}
We see that the blue edge $A_aB_a$ has the property that each of its endvertices has a basically $\cR$-antipodal vertex, say $u$ for one endvertex and $v$ for the other, such that $u$ and $v$ are connected by a blue edge. Let $\rho$ and $\tau$ be as in Lemma~\ref{le:sym}. Since $\cB$ is an $\Aut(\G)$-orbit and $\tau\rho^c$ flips the blue edge $B_0C_c$, there is an automorphism $\eta \in \Aut(\G)$ such that $A_a\eta = B_0$ and $B_a\eta = C_c$. Then $C_{2a+b}\eta$ is basically $\cR$-antipodal to $B_0$, $A_{2a+b}\eta$ is basically $\cR$-antipodal to $C_c$, and $C_{2a+b}\eta$ is connected to $A_{2a+b}\eta$ by a blue edge. The next result gives the three possible conditions on the parameters that are obtained from the corresponding analysis of the different possibilities.  

\begin{proposition}
\label{pro:VTcond}
Let $\G = \WH_n(a,b,c,d)$ be a vertex-transitive WH-graph where the parameters $n$, $a$, $b$, $c$ and $d$ are as in Proposition~\ref{pro:VT_basic}. Then one of the following holds:
\begin{itemize}
\itemsep = 0pt
\item $2c = 3a + 3b$.
\item $4a = 0$ and $4c = 4b + n/2$.
\item $4c = 4a + 4b$. 
\end{itemize}
\end{proposition}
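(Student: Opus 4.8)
The discussion preceding the statement already provides the key object: an automorphism $\eta \in \Aut(\G)$ with $A_a\eta = B_0$ and $B_a\eta = C_c$, together with the vertices $p := C_{2a+b}\eta$ and $q := A_{2a+b}\eta$, for which $p$ is basically $\cR$-antipodal to $B_0$, $q$ is basically $\cR$-antipodal to $C_c$, and $pq$ is a blue edge. The plan is to enumerate all vertices basically $\cR$-antipodal to $B_0$, do the same for $C_c$, and then check, for every pair $(p,q)$ drawn from these two lists with $pq$ a blue edge, which relation among the parameters it forces. Since $\eta$ exists, at least one such pair actually occurs, so it suffices to show that each relation thus obtained is (possibly after renaming $b$ and $d$) one of the three listed.

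First I would list the vertices basically $\cR$-antipodal to $B_0$. The only red edges at $B_0$ are the $b$-edge $B_0C_b$ and the $d$-edge $B_0C_d$, and $d = 2a+b$ by Proposition~\ref{pro:VT_basic}, so any basic $6$-cycle on which $B_0$ lies on a red $2$-path must contain $(C_b, B_0, C_{2a+b})$; such a cycle is completed by choosing a blue edge at $C_b$, a blue edge at $C_{2a+b}$, and a closing red $2$-path, whose (non)uniqueness is governed by Lemma~\ref{le:basic2R}. The resulting short case check (using that $b$ and $d$ are nonzero) shows that the antipodal partner of $B_0$ is one of $A_{a+b}$ and $C_{2a+2b-c}$, with two further possibilities $A_{b-a}$ and $C_{2b-c}$ arising exactly when $4a = 0$. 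The same argument applied at $C_c$, whose red edges go to $B_{c-b}$ and $B_{c-2a-b}$, yields that the vertices basically $\cR$-antipodal to $C_c$ are $A_{c-a-b}$ and $B_{2c-2a-2b}$, together with $A_{c+a-b}$ and $B_{2c-2b}$ when $4a = 0$.

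Next I would match the two lists through the blue edge $pq$. A blue edge is a left edge $A_iB_i$, a right edge $A_iC_i$, or a $c$-edge $B_iC_{i+c}$, so $(p,q)$ must be an $A$-vertex and a $B$-vertex with equal subscripts, an $A$-vertex and a $C$-vertex with equal subscripts, or a pair $C_{j+c}, B_j$. If $4a \neq 0$, the only admissible pairs are $(A_{a+b}, B_{2c-2a-2b})$ and $(C_{2a+2b-c}, A_{c-a-b})$, each forcing $2c = 3a+3b$, and $(C_{2a+2b-c}, B_{2c-2a-2b})$, forcing $4c = 4a+4b$; this already settles the proposition in that case. If $4a = 0$, so that $2a = n/2$, the same pairs remain, the pairs $(C_{2a+2b-c}, B_{2c-2b})$ and $(C_{2b-c}, B_{2c-2a-2b})$ give $4c = 4b + n/2$, and a few further pairs (for instance $(A_{a+b}, B_{2c-2b})$ and $(A_{b-a}, B_{2c-2b})$) give the seemingly new relations $a + 3b = 2c$ and $3b - a = 2c$.

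Finally I would absorb those last two relations into the list when $4a = 0$. Since $(3a+3b) - (3b-a) = 4a = 0$, the relation $3b - a = 2c$ is literally $2c = 3a+3b$. For $a + 3b = 2c$, the hypothesis $4a = 0$ gives $d - b = 2a = -2a = b - d$, so the roles of $b$ and $d$ may be interchanged; the interchanged parameters still satisfy $d = 2a+b$ (because $2a + (2a+b) = 4a + b = b$) and leave $c$ untouched, and $3a + 3d = 3a + 3(2a+b) = 9a + 3b = a + 3b$ since $8a = 0$, so in the relabelled graph the relation reads $2c = 3a+3b$. Hence, possibly after interchanging $b$ and $d$, one of the three stated relations always holds. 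I expect the main obstacle to be the bookkeeping in the two case checks producing the lists of basically $\cR$-antipodal vertices; beyond that, the only delicate point is recognizing that, when $4a = 0$, the two extra relations are not new but collapse onto $2c = 3a + 3b$.
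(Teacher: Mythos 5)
Your argument is correct and follows essentially the same route as the paper's proof: it uses the automorphism $\eta$ and the blue edge joining the basically $\cR$-antipodal partners of $B_0$ and $C_c$, enumerates those partners via Lemma~\ref{le:basic2R} (splitting on whether $4a=0$), and matches them through the three types of blue edges to get the same parameter relations. The only cosmetic difference is at the end of the $4a=0$ case, where you absorb the extra relation $2c=a+3b$ by interchanging $b$ and $d$, while the paper exchanges $a$ with $-a$; since $d=2a+b$ and $4a=0$, these relabelings amount to the same thing.
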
 

\begin{proof}
Let $\eta \in \Aut(\G)$ be as in the paragraph preceding this proposition. We consider the possibilities for $C_{2a+b}\eta$ and $A_{2a+b}\eta$ and analyse the different corresponding conditions under which these two vertices are connected by a blue edge.

Suppose first that $4a \neq 0$. Then Lemma~\ref{le:basic2R} implies that there are only two possibilities for each of $C_{2a+b}\eta$ and $A_{2a+b}\eta$. It is easy to verify (see Figure~\ref{fig:alt8_2}) that in this case 
$$
	C_{2a+b}\eta \in \{A_{a+b}, C_{2a+2b-c}\} \quad \text{and}\quad A_{2a+b}\eta \in \{A_{c-a-b}, B_{2c-2a-2b}\}.
$$  
\begin{figure}[htbp]
\begin{center}
\begin{tikzpicture}[line cap=round,line join=round,>=triangle 45,x=1.5cm,y=1.5cm, scale = 0.9]
\clip(3,3.5) rectangle (12,6.5);
\draw [line width=1pt,color=blue] (5,6)-- (6,5.5);
\draw [line width=1pt,color=red] (6,5.5)-- (7,5);
\draw [line width=1pt,color=red] (7,5)-- (6,4.5);
\draw [line width=1pt,color=blue] (6,4.5)-- (5,4);
\draw [line width=1pt,color=red] (5,4)-- (4.3,4.3);
\draw [line width=1pt,color=blue] (7,5)-- (8,5);
\draw [line width=1pt,color=red] (8,5)-- (9,5.5);
\draw [line width=1pt,color=blue] (9,5.5)-- (10,6);
\draw [line width=1pt,color=red] (10.7,4.3)-- (10,4);
\draw [line width=1pt,color=blue] (10,4)-- (9,4.5);
\draw [line width=1pt,color=red] (9,4.5)-- (8,5);
\draw [line width=1pt,color=blue] (6,4.5)-- (5,5.4);
\draw [line width=1pt,color=blue] (6,5.5)-- (5,4.6);
\draw [line width=1pt,color=red] (4.3,4.3)-- (5,4.6);
\draw [line width=1pt,color=red] (4.3,5.7)-- (5,6);
\draw [line width=1pt,color=red] (4.3,5.7)-- (5,5.4);
\draw (4.8,6.4) node[anchor=north west] {{\small $A_b$}};
\draw (9.7,6.4) node[anchor=north west] {{\small $A_{c-b}$}};
\draw (10.6,6.1) node[anchor=north west] {{\small $A_{c-a-b}$}};
\draw (3.8328869372288734,6.1) node[anchor=north west] {{\small $A_{a+b}$}};
\draw (5.9,5.9) node[anchor=north west] {{\small $C_b$}};
\draw (8.5,5.9) node[anchor=north west] {{\small $B_{c-b}$}};
\draw (4.2,5.45) node[anchor=north west] {{\small $A_{2a+b}$}};
\draw (10,5.48) node[anchor=north west] {{\small $A_{c-2a-b}$}};
\draw (6.8,4.9) node[anchor=north west] {{\small $B_0$}};
\draw (7.8,4.9) node[anchor=north west] {{\small $C_c$}};
\draw (10,4.98) node[anchor=north west] {{\small $C_{2c-b}$}};
\draw (4.3,5.0) node[anchor=north west] {{\small $B_{b-c}$}};

\draw (3.6,4.2) node[anchor=north west] {{\small $C_{2a+2b-c}$}};
\draw (10.5,4.2) node[anchor=north west] {{\small $B_{2c-2a-2b}$}};

\draw (5.9,4.4) node[anchor=north west] {{\small $C_{2a+b}$}};
\draw (8.5,4.4) node[anchor=north west] {{\small $B_{c-2a-b}$}};
\draw (9.7,3.9) node[anchor=north west] {{\small $C_{2c-2a-b}$}};
\draw (4.8,3.9) node[anchor=north west] {{\small $B_{2a+b-c}$}};
\draw [line width=1pt,color=blue] (9,5.5)-- (9.994071718637914,4.600361113281613);
\draw [line width=1pt,color=blue] (9,4.5)-- (10.01024879460394,5.393037835616917);
\draw [line width=1pt,color=red] (10,6)-- (10.7,5.7);
\draw [line width=1pt,color=red] (10.7,5.7)-- (10.01024879460394,5.393037835616917);
\draw [line width=1pt,color=red] (9.994071718637914,4.600361113281613)-- (10.7,4.3);
\begin{scriptsize}
\draw [fill=black] (5,4) circle (2.5pt);
\draw [fill=black] (6,4.5) circle (2.5pt);
\draw [fill=black] (7,5) circle (2.5pt);
\draw [fill=black] (4.3,4.3) circle (2.5pt);
\draw [fill=black] (5,6) circle (2.5pt);
\draw [fill=black] (6,5.5) circle (2.5pt);
\draw [fill=black] (8,5) circle (2.5pt);
\draw [fill=black] (9,5.5) circle (2.5pt);
\draw [fill=black] (10,6) circle (2.5pt);
\draw [fill=black] (9,4.5) circle (2.5pt);
\draw [fill=black] (10,4) circle (2.5pt);
\draw [fill=black] (10.7,4.3) circle (2.5pt);
\draw [fill=black] (5,5.4) circle (2.5pt);
\draw [fill=black] (5,4.6) circle (2.5pt);
\draw [fill=black] (4.3,5.7) circle (2.5pt);
\draw [fill=black] (10.01024879460394,5.393037835616917) circle (2.5pt);
\draw [fill=black] (9.994071718637914,4.600361113281613) circle (2.5pt);
\draw [fill=black] (10.7,5.7) circle (2.5pt);
\end{scriptsize}
\end{tikzpicture}
\caption{The situation around the blue edge $B_0C_c$ when $4a \neq 0$.}
	\label{fig:alt8_2}
\end{center}
\end{figure}
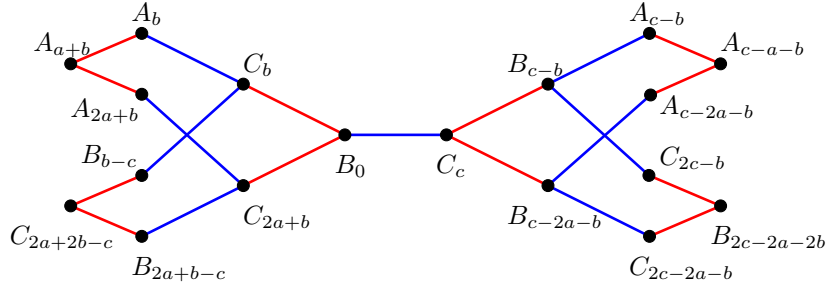
If $C_{2a+b}\eta = A_{a+b}$, then for this vertex to be connected to $A_{2a+b}\eta$ by a blue edge we must have that $A_{2a+b}\eta = B_{2c-2a-2b}$ and $a+b = 2c-2a-2b$, that is $2c = 3a+3b$. Similarly, if $A_{2a+b}\eta = A_{c-a-b}$, then $C_{2a+b}\eta = C_{2a+2b-c}$ and $c-a-b = 2a+2b-c$, again yielding $2c = 3a+3b$. The only remaining possibility is that $C_{2a+b}\eta = C_{2a+2b-c}$ and $A_{2a+b}\eta = B_{2c-2a-2b}$ with $3c-2a-2b = 2a+2b-c$, that is $4c = 4a+4b$.

Suppose now that $4a = 0$. Since $2a \neq 0$, this implies that $n$ is divisible by $4$ and that $a \in \{n/4, 3n/4\}$. This time Lemma~\ref{le:basic2R} implies that there are four possibilities for each of $C_{2a+b}\eta$ and $A_{2a+b}\eta$. However, as is evident from Figure~\ref{fig:alt8_3} the extra possibilities are just the antipodal vertices on the corresponding red $4$-cycles (which are just the vertices obtained by adding $n/2$ to the subscript) of the vertices obtained in the case of $4a \neq 0$. In other words, 
$$
	C_{2a+b}\eta \in \{A_{b+n/4}, A_{b+3n/4}, C_{2b-c}, C_{2b-c+n/2}\}
$$
and
$$
A_{2a+b}\eta \in \{A_{c-b+n/4}, A_{c-b+3n/4}, B_{2c-2b}, B_{2c-2b+n/2}\}.
$$
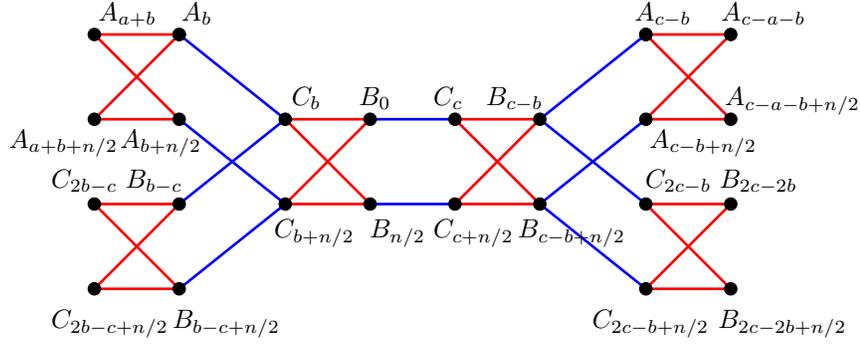
\begin{figure}[htbp]
\begin{center}
\begin{tikzpicture}[line cap=round,line join=round,>=triangle 45,x=1.5cm,y=1.5cm, scale = 0.94]
\clip(3.3,3.5) rectangle (11.8,7);
\draw [line width=1pt,color=blue] (5,6.4)-- (6,5.6);
\draw [line width=1pt,color=red] (6,5.6)-- (6.8,5.6);
\draw [line width=1pt,color=red] (6.8,5.6)-- (6,4.8);
\draw [line width=1pt,color=blue] (6,4.8)-- (5,4);
\draw [line width=1pt,color=red] (5,4)-- (4.2,4.8);
\draw [line width=1pt,color=blue] (6.8,5.6)-- (7.6,5.6);
\draw [line width=1pt,color=red] (7.6,5.6)-- (8.4,5.6);
\draw [line width=1pt,color=blue] (8.4,5.6)-- (9.4,6.4);
\draw [line width=1pt,color=red] (10.2,4.8)-- (9.4,4);
\draw [line width=1pt,color=blue] (9.4,4)-- (8.4,4.8);
\draw [line width=1pt,color=red] (8.4,4.8)-- (7.6,5.6);
\draw [line width=1pt,color=blue] (6,4.8)-- (5,5.6);
\draw [line width=1pt,color=blue] (6,5.6)-- (5,4.8);
\draw [line width=1pt,color=red] (4.2,4.8)-- (5,4.8);
\draw [line width=1pt,color=red] (4.2,6.4)-- (5,6.4);
\draw [line width=1pt,color=red] (4.2,6.4)-- (5,5.6);
\draw [line width=1pt,color=blue] (8.4,5.6)-- (9.4,4.8);
\draw [line width=1pt,color=blue] (8.4,4.8)-- (9.4,5.6);
\draw [line width=1pt,color=red] (9.4,6.4)-- (10.2,6.4);
\draw [line width=1pt,color=red] (10.2,6.4)-- (9.4,5.6);
\draw [line width=1pt,color=red] (9.4,4.8)-- (10.2,4.8);
\draw [line width=1pt,color=red] (4.2,4)-- (5,4);
\draw [line width=1pt,color=red] (5,4.8)-- (4.2,4);
\draw [line width=1pt,color=red] (5,6.4)-- (4.2,5.6);
\draw [line width=1pt,color=red] (6,4.8)-- (6.8,4.8);
\draw [line width=1pt,color=red] (4.2,5.6)-- (5,5.6);
\draw [line width=1pt,color=blue] (6.8,4.8)-- (7.6,4.8);
\draw [line width=1pt,color=red] (9.4,4)-- (10.2,4);
\draw [line width=1pt,color=red] (6,5.6)-- (6.8,4.8);
\draw [line width=1pt,color=red] (8.4,5.6)-- (7.6,4.8);
\draw [line width=1pt,color=red] (7.6,4.8)-- (8.4,4.8);
\draw [line width=1pt,color=red] (9.4,5.6)-- (10.2,5.6);
\draw [line width=1pt,color=red] (10.2,5.6)-- (9.4,6.4);
\draw [line width=1pt,color=red] (9.4,4.8)-- (10.2,4);

\draw (4.142151148686999,6.8) node[anchor=north west] {{\small $A_{a+b}$}};
\draw (4.927126064858327,6.8) node[anchor=north west] {{\small $A_b$}};
\draw (9.2,6.8) node[anchor=north west] {{\small $A_{c-b}$}};
\draw (10.015445610754615,6.8) node[anchor=north west] {{\small $A_{c-a-b}$}};
\draw (5.964414346941868,6) node[anchor=north west] {{\small $C_b$}};
\draw (6.6,6) node[anchor=north west] {{\small $B_0$}};
\draw (7.3,6) node[anchor=north west] {{\small $C_c$}};
\draw (7.8,6) node[anchor=north west] {{\small $B_{c-b}$}};
\draw (3.3,5.6) node[anchor=north west] {{\small $A_{a+b+n/2}$}};
\draw (4.35,5.6) node[anchor=north west] {{\small $A_{b+n/2}$}};
\draw (9.34,5.6) node[anchor=north west] {{\small $A_{c-b+n/2}$}};
\draw (10.05,6) node[anchor=north west] {{\small $A_{c-a-b+n/2}$}};
\draw (10,5.22) node[anchor=north west] {{\small $B_{2c-2b}$}};
\draw (9.300557740670012,5.22) node[anchor=north west] {{\small $C_{2c-b}$}};
\draw (4.4,5.22) node[anchor=north west] {{\small $B_{b-c}$}};
\draw (3.7,5.22) node[anchor=north west] {{\small $C_{2b-c}$}};
\draw (5.8,4.75) node[anchor=north west] {{\small $C_{b+n/2}$}};
\draw (6.7,4.75) node[anchor=north west] {{\small $B_{n/2}$}};
\draw (7.3,4.75) node[anchor=north west] {{\small $C_{c+n/2}$}};
\draw (8.1,4.75) node[anchor=north west] {{\small $B_{c-b+n/2}$}};
\draw (4.843021609554256,3.9) node[anchor=north west] {{\small $B_{b-c+n/2}$}};
\draw (9.987410792319924,3.9) node[anchor=north west] {{\small $B_{2c-2b+n/2}$}};
\draw (3.7,3.9) node[anchor=north west] {{\small $C_{2b-c+n/2}$}};
\draw (8.8,3.9) node[anchor=north west] {{\small $C_{2c-b+n/2}$}};
\begin{scriptsize}
\draw [fill=black] (5,4) circle (2.5pt);
\draw [fill=black] (5,6.4) circle (2.5pt);
\draw [fill=black] (5,5.6) circle (2.5pt);
\draw [fill=black] (5,4.8) circle (2.5pt);
\draw [fill=black] (5,6.4) circle (2.5pt);
\draw [fill=black] (6,4.8) circle (2.5pt);
\draw [fill=black] (6.8,5.6) circle (2.5pt);
\draw [fill=black] (6,5.6) circle (2.5pt);
\draw [fill=black] (6.8,4.8) circle (2.5pt);
\draw [fill=black] (4.2,5.6) circle (2.5pt);
\draw [fill=black] (4.2,4.8) circle (2.5pt);
\draw [fill=black] (4.2,6.4) circle (2.5pt);
\draw [fill=black] (4.2,4) circle (2.5pt);
\draw [fill=black] (7.6,5.6) circle (2.5pt);
\draw [fill=black] (7.6,4.8) circle (2.5pt);
\draw [fill=black] (8.4,4.8) circle (2.5pt);
\draw [fill=black] (8.4,5.6) circle (2.5pt);
\draw [fill=black] (9.4,6.4) circle (2.5pt);
\draw [fill=black] (9.4,4) circle (2.5pt);
\draw [fill=black] (9.4,5.6) circle (2.5pt);
\draw [fill=black] (9.4,4.8) circle (2.5pt);
\draw [fill=black] (9.4,6.4) circle (2.5pt);
\draw [fill=black] (10.2,6.4) circle (2.5pt);
\draw [fill=black] (10.2,4.8) circle (2.5pt);
\draw [fill=black] (10.2,4) circle (2.5pt);
\draw [fill=black] (10.2,5.6) circle (2.5pt);
\end{scriptsize}
\end{tikzpicture}
\caption{The situation around the blue edge $B_0C_c$ when $4a = 0$.}
	\label{fig:alt8_3}
\end{center}
\end{figure}
If one of $A_{b+n/4}, A_{b+3n/4}$ is connected by a blue edge to one of $B_{2c-2b}, B_{2c-2b+n/2}$, then $2c = 3b + n/4$ or $2c = 3b + 3n/4$. Exchanging the roles of $a$ and $-a$ if necessary (note that $2a = 2(-a)$, and so we do not violate the assumption $d = 2a+b$ this way) we thus see that $2c = 3a + 3b$. A similar conclusion can be made if one of $C_{2b-c}, C_{2b-c+n/2}$ is connected by a blue edge to one of $A_{c-b+n/4}, A_{c-b+3n/4}$. Finally, suppose one of $C_{2b-c}, C_{2b-c+n/2}$ is connected by a blue edge to one of $B_{2c-2b}, B_{2c-2b+n/2}$. Then one of $4c = 4b (= 4b + 4a)$ or $4c = 4b + n/2$ holds, as claimed.
\end{proof}

We now show that each of the three possibilities from the above proposition leads to the graphs from Proposition~\ref{pro:VTfam1}, Proposition~\ref{pro:VTfam2} or to the two sporadic examples with $n = 8$, mentioned at the beginning of this section. The first possibility is easy to handle, while the other two require a more detailed analysis of the structure of the corresponding graphs.

\begin{proposition}
\label{pro:VTcond1}
Let $\G = \WH_n(a,b,c,d)$ be a vertex-transitive WH-graph where the parameters $n$, $a$, $b$, $c$ and $d$ are as in Proposition~\ref{pro:VT_basic}. If $2c = 3a+3b$ then $a,b$ and $d$ are all odd, and so $\G$ belongs to the family of graphs from Proposition~\ref{pro:VTfam1}.
\end{proposition}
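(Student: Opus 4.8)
The plan is to first determine the parities of $a$, $b$ and $d$ from the two available linear relations together with the coprimality condition built into Construction~\ref{con:WHAT}, and then to observe that, after a harmless normalization of $a$, the parameters are exactly of the shape required by Proposition~\ref{pro:VTfam1}.

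For the parity argument I would reduce modulo $2$, which is legitimate since Proposition~\ref{pro:VT_basic} guarantees that $n$ is even. Applying the ring homomorphism $\ZZ_n \to \ZZ_2$ to the hypothesis $2c = 3a + 3b$ yields $a + b \equiv 0 \pmod 2$, so $a$ and $b$ have the same parity; applying it to the relation $2a = d - b$ (also from Proposition~\ref{pro:VT_basic}) yields $d \equiv b \pmod 2$. Hence $a$, $b$ and $d$ all have the same parity. Since Proposition~\ref{pro:VT_basic} also tells us that $c$ is even, if $a$, $b$ and $d$ were all even, then $2$ would be a common prime divisor of $n$, $a$, $b$, $c$ and $d$, contradicting the requirement in Construction~\ref{con:WHAT} that these five numbers have no common prime divisor. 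Therefore $a$, $b$ and $d$ are all odd, which is the first assertion.

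It then remains to fit $\G$ into the family of Proposition~\ref{pro:VTfam1}. The only parameter not yet in the prescribed range is $a$: from $2a \neq 0$ we get $a \notin \{0, n/2\}$, so exactly one of $a$ and $n - a$ lies in $\{1, \dots, n/2 - 1\}$. If it is $n - a$, I would replace $(a, b, c, d)$ by $(-a, -b, -c, -d)$; by Lemma~\ref{le:iso} this gives an isomorphic WH-graph, and it preserves everything that matters, since negating scales the identities $d = 2a + b$ and $2c = 3a + 3b$, keeps $b, c, d$ pairwise distinct with $b, d$ nonzero, and, because $n$ is even, preserves the parities of $a, b, c, d$. After this step we have $n \geq 4$ even (as $n$ is even and $n \geq 3$), $1 \leq a < n/2$, $0 \leq b, c, d < n$ with $b, c, d$ pairwise distinct, $a$, $b$ and $d$ odd, $c$ even, $d = 2a + b$ and $2c = 3a + 3b$ — precisely the hypotheses of Proposition~\ref{pro:VTfam1}, so $\G$ is (isomorphic to) a member of that family. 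There is no genuine obstacle here; the only point requiring a little care is noticing that one must negate all four parameters simultaneously (not merely $a$) so as not to disturb the linear relations or the parity conditions.
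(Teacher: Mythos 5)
Your proof is correct and follows essentially the same route as the paper: reducing the relations $2c=3a+3b$ and $2a=d-b$ modulo $2$ (legitimate since $n$ is even) shows $a$, $b$, $d$ share a parity, and the coprimality/connectedness requirement of Construction~\ref{con:WHAT}, together with $c$ even, rules out the all-even case. The only addition is your explicit normalization of $a$ into the range $1\leq a<n/2$ by applying Lemma~\ref{le:iso} with $q=-1$ to all four parameters; the paper leaves this routine step implicit, and your observation that one must negate all parameters simultaneously to preserve the linear relations is a correct and worthwhile point of care.
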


\begin{proof}
Proposition~\ref{pro:VT_basic} implies that $n$ is even, and so $2c = 3a+3b$ implies that $a+b$ is also even. Therefore, $a$ and $b$ are of the same parity. Since $d = 2a+b$ and $c$ is even, the fact that by definition $\G$ is connected implies that $a$, $b$ and $d$ must all be odd.
\end{proof}

\begin{proposition}
\label{pro:VTcond2}
Let $\WH_n(a,b,c,d)$ be a vertex-transitive WH-graph where the parameters $n$, $a$, $b$, $c$ and $d$ are as in Proposition~\ref{pro:VT_basic}, $4a = 0$ and $4c = 4b+n/2$. Then $n = 8$, $c \in \{0,4\}$ and, exchanging the pair $\{b,d\}$ with $\{-b,-d\}$ if necessary, $\{b,d\} = \{1,5\}$.
\end{proposition}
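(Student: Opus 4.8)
The plan is to fix $n$ and almost all of the parameters by a purely arithmetic argument, and then to use a local, vertex-transitivity–based analysis to discard everything except the two sporadic graphs of order $24$.

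\emph{Arithmetic reduction.} Write $n=2^{s}m$ with $m$ odd. Since $2a\neq0$ but $4a=0$, the order of $a$ is exactly $4$, so $4\mid n$ (hence $s\geq2$) and $\gcd(a,n)=n/4=2^{s-2}m$. By Proposition~\ref{pro:VT_basic} the $2$-part of $\gcd(c,n)$ exceeds that of $\gcd(a,n)=n/4$, so $2^{s-1}\mid c$; also $2a=d-b=n/2$, so $d=b+n/2$. The relation $4c=4b+n/2$ in $\ZZ_n$ reads $4(c-b)=2^{s-1}m(1+2k)$ for some integer $k$. For $s=2$ this gives $2(c-b)=m(1+2k)$, an even number equal to an odd one, so $s\geq3$. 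Then $a$ and $c$ are even and $d-b$ is even, so connectedness (the no-common-prime condition) forces $b$ odd; hence $c-b$ is odd, and comparing $2$-parts in $4(c-b)=2^{s-1}m(1+2k)$ yields $2^{2}=2^{s-1}$, i.e.\ $s=3$. Thus $n=8m$ with $m$ odd, $a\in\{2m,6m\}$, $d=b+4m$, $4\mid c$, and $c\equiv b+m\pmod{2m}$; moreover the no-common-prime condition together with $d-b=4m$ and $c-b\equiv m\pmod{2m}$ gives $\gcd(b,n)=1$ (an odd prime dividing $m$ divides $n,a,d-b,c-b$, so cannot also divide $b$).

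\emph{Normalization and the cheap cases.} Since $b$ is a unit, Lemma~\ref{le:iso} lets me relabel so that $b=1$ (this preserves the relations above: $4\mid c$, $d=4m+1$, $c\equiv1+m\pmod{2m}$), and replacing $a$ by $-a$ if needed -- legitimate because $2a=2(-a)$ when $4a=0$, so $d=2a+1$ is unchanged -- I may take $a=2m$. Then $d=4m+1$, and $\gcd(c,m)=1$ so $\gcd(c,n)=\gcd(c,8)\in\{4,8\}$, while $c$ is the unique element of $\{1+m,1+3m,1+5m,1+7m\}$ divisible by $4$. If $c=0$ then $\gcd(c,n)=n\leq8$ forces $m=1$, so I may assume $c\neq0$; a short check (no $3$-cycle exists, while the red $4$-cycle $(A_0,A_a,A_{2a},A_{3a})$ does) then shows $\G$ has girth $4$. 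By the proof of Proposition~\ref{pro:VTcond} we are in its subcase where a vertex of $\{C_{2b-c},C_{2b-c+n/2}\}$ is joined to one of $\{B_{2c-2b},B_{2c-2b+n/2}\}$ (indeed $2c=3a+3b$ is impossible here: with $4a=0$ and $4c=4b+n/2$ it forces $2b=0$, whence $b=0$ or $d=4a=0$, both excluded). It remains to rule out $m\geq3$.

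\emph{The structural step, and where the difficulty lies.} For this last part I would argue locally, in the spirit of the two-case analysis in the proof of Theorem~\ref{the:ET}. Take the automorphism $\eta$ from the paragraph preceding Proposition~\ref{pro:VTcond}, so $A_a\eta=B_0$, $B_a\eta=C_c$, and $C_{2a+b}\eta$ (basically $\cR$-antipodal to $B_0$) and $A_{2a+b}\eta$ (basically $\cR$-antipodal to $C_c$) are known up to a shift by $n/2$. The idea is to extend the knowledge of $\eta$ along the red $4$-cycles and along the blue cycles through $A_a$, $A_0$ and $B_0$: $\eta$ carries a red (resp.\ blue) cycle onto the unique red (resp.\ blue) cycle through the image of any of its vertices, and matching the two cyclic orders determines $\eta$ on that whole cycle. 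Imposing agreement of these partial descriptions wherever two such cycles meet -- in particular at the four vertices of the red $4$-cycle $(A_{-a},A_0,A_a,A_{2a})$, which distribute over two or four distinct blue cycles according as $\gcd(c,n)=4$ or $8$ -- produces congruences in $n,a,b,c$. Combined with $c\equiv b+m\pmod{2m}$ and $4\mid c$, and after splitting on $\gcd(c,n)\in\{4,8\}$ (equivalently on $m\bmod4$) and on which of its two admissible values $A_0\eta$ takes, I expect these congruences to be jointly unsatisfiable for $m\geq3$. This bookkeeping -- tracking the several blue cycles, of length $3n/\gcd(c,n)\in\{6m,3m\}$, their mutual intersections, and the induced permutation -- is the only genuine obstacle; everything else is routine computation modulo $8m$. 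Once $m=1$ is forced we get $n=8$, $a=2$, $d=b+4$ with $b$ odd, so $\{b,d\}\in\{\{1,5\},\{3,7\}\}$, which becomes $\{1,5\}$ after possibly replacing $\{b,d\}$ by $\{-b,-d\}$, and $c\in\{0,4\}$, as claimed.
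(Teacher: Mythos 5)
Your arithmetic reduction is correct and even a little sharper than what the paper records at the analogous stage (you pin down $n=8m$ with $m$ odd, normalize $b=1$, $a=2m$, $d=4m+1$, $4\mid c$, $c\equiv 1+m \pmod{2m}$; all of these deductions check out). But the proposition's decisive content is precisely that $m\geq 3$ is impossible, i.e.\ $n=8$, and at that point your text stops being a proof: you outline a plan (propagate a partially known automorphism $\eta$ along red $4$-cycles and blue cycles, match the partial descriptions where cycles meet, and derive congruences) and then state ``I expect these congruences to be jointly unsatisfiable for $m\geq3$.'' That expectation is exactly the claim to be proved, and the bookkeeping you defer is not routine: nothing in your sketch identifies which incidences actually produce contradictory congruences, nor why matching cyclic orders on blue cycles of length $6m$ or $3m$ pins $\eta$ down enough to do so. So there is a genuine gap at the heart of the argument.

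For comparison, the paper closes this step with a short local argument rather than a global propagation. Working with $b=1$, $d=1+n/2$ and $2(c-1)\in\{a,-a\}$, it takes an automorphism $\eta$ sending the blue $2$-path $(A_0,B_0,C_c)$ to $(B_0,A_0,C_0)$ and observes that the pairs $\{B_{2c-2},B_{2c-2+n/2}\}$ and $\{C_{2c-1},C_{2c-1+n/2}\}$ are antipodal pairs on one red $4$-cycle, so their $\eta$-images must be as well. Knowing $\{A_{2c-2},A_{2c-2+n/2}\}\eta=\{C_1,C_{1+n/2}\}$ and $\{B_{c-1},B_{c-1+n/2}\}\eta=\{B_{-1},B_{-1+n/2}\}$ leaves only four candidate image pairs, and a parity check on subscripts (the candidate $B$- and $C$-indices are odd while $b$ and $d$ are odd, so no red edge can join them) eliminates all but $\{A_1,A_{1+n/2}\}$ and $\{A_{-1},A_{-1+n/2}\}$; these being antipodal on a common red $4$-cycle forces $-1+n/4=1$, i.e.\ $n=8$, after which $c\in\{0,4\}$ and $\{b,d\}=\{1,5\}$ follow as you say. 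Some such concrete mechanism — the parity-of-subscripts obstruction, or an equally explicit one — is what your proposal is missing; without it the case $m\geq 3$ is not ruled out.
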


\begin{proof}
Let $\G = \WH_n(a,b,c,d)$ and suppose $4a = 0$ and $4c = 4b + n/2$. Since $2a \neq 0$ but $4a = 0$, $n$ is divisible by $4$ and $a \in \{n/4, 3n/4\}$. Consequently, as $4(c-b) = n/2$, $n$ in fact must be divisible by $8$. Since $a$ is thus even, $b$ must be odd (otherwise, as $c$ is even and $d = b + 2a$, $\G$ is not connected). Moreover, no odd prime divisor of $n$ can divide $b$ as otherwise it would also divide $c$ (and of course $a$), again contradicting the assumption that $\G$ is connected. Therefore, $b$ is coprime to $n$, and so Lemma~\ref{le:iso} implies that we can assume $b = 1$. We thus get $4c = 4 + n/2$, and so $2(c-1) \in \{n/4, 3n/4\} = \{a,-a\}$. Consider now the situation around the blue $2$-path $(A_0,B_0,C_c)$ depicted on Figure~\ref{fig:7cyc}.
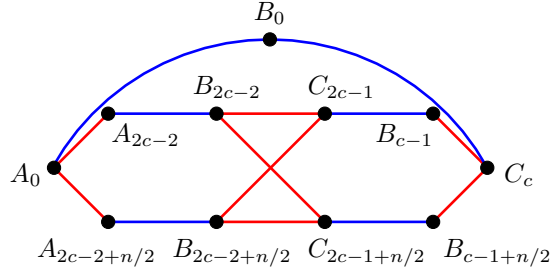
\begin{figure}[htbp]
\begin{center}
\definecolor{ffqqtt}{rgb}{1,0,0.2}
\definecolor{ccqqqq}{rgb}{0.8,0,0}
\definecolor{qqqqff}{rgb}{0,0,1}
\begin{tikzpicture}[line cap=round,line join=round,>=triangle 45,x=1.8cm,y=1.8cm]
\clip(4.3,4.2) rectangle (8.5,6.5);
\draw [line width=1pt,color=blue] (5.2,5.6)-- (6,5.6);
\draw [line width=1pt,color=red] (6,5.6)-- (6.8,5.6);
\draw [line width=1pt,color=red] (6.8,5.6)-- (6,4.8);
\draw [line width=1pt,color=blue] (6,4.8)-- (5.2,4.8);
\draw [line width=1pt,color=blue] (6.8,5.6)-- (7.6,5.6);
\draw [line width=1pt,color=red] (7.6,5.6)-- (8,5.2);
\draw [line width=1pt,color=red] (4.8,5.2)-- (5.2,4.8);
\draw [line width=1pt,color=red] (6,4.8)-- (6.8,4.8);
\draw [line width=1pt,color=red] (4.8,5.2)-- (5.2,5.6);
\draw [line width=1pt,color=blue] (6.8,4.8)-- (7.6,4.8);
\draw [line width=1pt,color=red] (6,5.6)-- (6.8,4.8);
\draw [line width=1pt,color=red] (8,5.2)-- (7.6,4.8);

\draw (4.4,5.3) node[anchor=north west] {{\small $A_0$}};
\draw (8.05,5.3) node[anchor=north west] {{\small $C_c$}};

\draw (5.75,5.95) node[anchor=north west] {{\small $B_{2c-2}$}};
\draw (6.6,5.95) node[anchor=north west] {{\small $C_{2c-1}$}};

\draw (7.1,5.6) node[anchor=north west] {{\small $B_{c-1}$}};
\draw (5.15,5.6) node[anchor=north west] {{\small $A_{2c-2}$}};

\draw (4.6,4.75) node[anchor=north west] {{\small $A_{2c-2+n/2}$}};
\draw (5.6,4.75) node[anchor=north west] {{\small $B_{2c-2+n/2}$}};
\draw (6.6,4.75) node[anchor=north west] {{\small $C_{2c-1+n/2}$}};
\draw (7.6,4.75) node[anchor=north west] {{\small $B_{c-1+n/2}$}};

\draw [shift={(6.393526435268658,4.314346988679726)},line width=1pt,color=blue]  plot[domain=0.5038427176587198:2.634321270934659,variable=\t]({1*1.8344314025444763*cos(\t r)+0*1.8344314025444763*sin(\t r)},{0*1.8344314025444763*cos(\t r)+1*1.8344314025444763*sin(\t r)});
\draw (6.2,6.5) node[anchor=north west] {{\small $B_0$}};

\begin{scriptsize}
\draw [fill=black] (5.2,5.6) circle (2.5pt);
\draw [fill=black] (5.2,4.8) circle (2.5pt);
\draw [fill=black] (6,4.8) circle (2.5pt);
\draw [fill=black] (6.8,5.6) circle (2.5pt);
\draw [fill=black] (6,5.6) circle (2.5pt);
\draw [fill=black] (6.8,4.8) circle (2.5pt);
\draw [fill=black] (4.8,5.2) circle (2.5pt);
\draw [fill=black] (7.6,5.6) circle (2.5pt);
\draw [fill=black] (7.6,4.8) circle (2.5pt);
\draw [fill=black] (8,5.2) circle (2.5pt);

\draw [fill=black] (6.39448138340117,6.148778142665986) circle (2.5pt);
\end{scriptsize}
\end{tikzpicture}
\caption{The situation around the blue $2$-path $(A_0,B_0,C_c)$.}
	\label{fig:7cyc}
\end{center}
\end{figure}
By vertex-transitivity and since $\tau \in \Aut(\G)$ there exists $\eta \in \Aut(\G)$ mapping $A_0$ to $B_0$, $B_0$ to $A_0$ and $C_c$ to $C_0$. Note that $\{B_{2c-2}\eta, B_{2c-2+n/2}\eta\}$ and $\{C_{2c-1}\eta, C_{2c-1+n/2}\eta\}$ are pairs of antipodal vertices on the same red $4$-cycle. Since $\{A_{2c-2}\eta, A_{2c-2+n/2}\eta\} = \{C_1, C_{1+n/2}\}$ and $\{B_{c-1}\eta, B_{c-1+n/2}\eta\} = \{B_{-1}, B_{-1+n/2}\}$, we see that
$$
	\{B_{2c-2}\eta, B_{2c-2+n/2}\eta\} \in \{\{A_1, A_{1+n/2}\}, \{B_{1-c}, B_{1-c+n/2}\}\}
$$
and
$$
	\{C_{2c-1}\eta, C_{2c-1+n/2}\eta\} \in \{\{A_{-1}, A_{-1+n/2}\}, \{C_{c-1}, C_{c-1+n/2}\}\}.
$$
Since all of the indices of these two $B$-vertices and these two $C$-vertices are odd (recall that $c$ and $n/2$ are both even) and $b = 1$ and $d = 1 + n/2$ are also odd, it thus follows that there are no red edges between the pairs $\{B_{1-c}, B_{1-c+n/2}\}$ and $\{C_{c-1}, C_{c-1+n/2}\}$, and so the only possibility is that $\{A_1, A_{1+n/2}\}$ and $\{A_{-1}, A_{-1+n/2}\}$ are the two pairs of antipodal vertices on the same red $4$-cycle. But then $-1+n/4 \in \{1,1+n/2\}$ which is clearly only possible if $-1+n/4 = 1$, implying that $n = 8$. To complete the proof, observe that since $a$ is even, Proposition~\ref{pro:VT_basic} implies that $c$ is divisible by $4$, and so $c \in \{0,4\}$, completing the proof.
\end{proof}

We now finally analyse the last of the three possibilities from Proposition~\ref{pro:VTcond}. In fact, in view of Proposition~\ref{pro:VTcond1}, it suffices to consider the possibility that $4c = 4a + 4b$ but $2c \neq 3a + 3b$. Before making this analysis we introduce some more terminology. Suppose $\G = \WH_n(a,b,c,d)$ is a vertex-transitive WH-graph where the parameters $n$, $a$, $b$, $c$ and $d$ are as in Proposition~\ref{pro:VT_basic} and $4c = 4a+4b$ but $2c \neq 3a + 3b$. Note that this implies that we cannot at the same time have $4a = 0$ with $4c = 4b + n/2$. 

Suppose first that $4a \neq 0$. The proof of Proposition~\ref{pro:VTcond} then reveals that for the blue edge $B_0C_c$ there is a unique basically $\cR$-antipodal vertex of $B_0$ (namely $C_{2a+2b-c}$) and a unique basically $\cR$-antipodal vertex of $C_c$ (namely $B_{2c-2a-2b}$) such that these two vertices are connected by a blue edge. Since $\cB$ is an $\Aut(\G)$-orbit this means that for any blue edge $uv$ there is a unique blue edge $u'v'$ such that $u'$ is basically $\cR$-antipodal to $v$ and $v'$ is basically $\cR$-antipodal to $u$. In this case we say that the blue edge $u'v'$ {\em corresponds} to the blue edge $uv$ and more precisely that the blue arc $(u',v')$ {\em corresponds} to the blue arc $(u,v)$. Of course, the relation of correspondence is symmetric and if $(u',v')$ corresponds to $(u,v)$, then $(v',u')$ corresponds to $(v,u)$. Using the automorphism $\rho$ from~\eqref{eq:rho} and the fact that we have the situation from Figure~\ref{fig:alt8_1} we clearly get the following. 

\begin{lemma}
\label{le:aux1}
Let $\WH_n(a,b,c,d)$ be a vertex-transitive WH-graph where the parameters $n$, $a$, $b$, $c$ and $d$ are as in Proposition~\ref{pro:VT_basic}. Suppose $4a \neq 0$, $4c = 4a+4b$ and $2c \neq 3a+3b$. Then the following holds for each $i \in \ZZ_n$:
\begin{itemize}
\itemsep = 0pt
\item $(A_i,B_i)$ corresponds to $(A_{i+a+b},C_{i+a+b})$.
\item $(B_i,C_{i+c})$ corresponds to $(B_{i+2c-2a-2b}, C_{i+3c-2a-2b})$.
\end{itemize}
\end{lemma}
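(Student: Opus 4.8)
The plan is to reduce each of the two items to a single representative blue arc and then spread it over the whole $\la\rho\ra$-orbit. The point is that the property of being basically $\cR$-antipodal is purely combinatorial (lying together on a basic $6$-cycle), and that by Proposition~\ref{pro:VT_basic} ``blue'' means ``belonging to the $\Aut(\G)$-orbit $\cB$''; hence the relation of correspondence between blue arcs is preserved by every automorphism, in particular by $\rho$. Moreover, under the present hypotheses $4a\neq 0$, $4c=4a+4b$, $2c\neq 3a+3b$, the discussion preceding the lemma already guarantees that the correspondent of a blue arc is \emph{unique}. Since the two items concern, respectively, the left edges and the $c$-edges of $\G$, which form two $\la\rho\ra$-orbits with representatives $A_aB_a$ and $B_0C_c$, it is enough to identify the correspondent of the blue arc $(A_a,B_a)$ and of the blue arc $(B_0,C_c)$ and then conjugate by powers of $\rho$.

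For the arc $(B_0,C_c)$ I would simply read off the needed information from the proof of Proposition~\ref{pro:VTcond}. There, with $\eta\in\Aut(\G)$ chosen so that $A_a\eta=B_0$ and $B_a\eta=C_c$, it is shown that when $4a\neq0$, $4c=4a+4b$ and $2c\neq3a+3b$ one is forced to have $C_{2a+b}\eta=C_{2a+2b-c}$ and $A_{2a+b}\eta=B_{2c-2a-2b}$; equivalently, $C_{2a+2b-c}$ is basically $\cR$-antipodal to $B_0$ and $B_{2c-2a-2b}$ is basically $\cR$-antipodal to $C_c$. Since $4c=4a+4b$ we may rewrite $C_{2a+2b-c}=C_{3c-2a-2b}$, and then $B_{2c-2a-2b}C_{3c-2a-2b}$ is a $c$-edge, hence blue. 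By uniqueness, $(B_{2c-2a-2b},C_{3c-2a-2b})$ is the blue arc corresponding to $(B_0,C_c)$, which is the second item for $i=0$; applying $\rho^i$ gives it for all $i\in\ZZ_n$.

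For the arc $(A_a,B_a)$ I would exhibit the two basic $6$-cycles visible in Figure~\ref{fig:alt8_1} explicitly. Using the relation $d=2a+b$ from Proposition~\ref{pro:VT_basic} one verifies that
\[
(A_a,A_{2a},B_{2a},C_{2a+b},B_0,A_0)\qquad\text{and}\qquad(C_{a+b},A_{a+b},A_{2a+b},A_{3a+b},C_{3a+b},B_a)
\]
are $6$-cycles, and that with the red/blue colouring of Proposition~\ref{pro:VT_basic} each of them has exactly four red edges while its two blue edges (two left edges in the first cycle, two right edges in the second) are antipodal; hence both are basic. The first cycle shows that $C_{2a+b}$ is basically $\cR$-antipodal to $A_a$, the second that $A_{2a+b}$ is basically $\cR$-antipodal to $B_a$, and $A_{2a+b}C_{2a+b}$ is a right edge, hence blue. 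By uniqueness, $(A_{2a+b},C_{2a+b})$ is the blue arc corresponding to $(A_a,B_a)$, and conjugating by $\rho^{i-a}$ produces the first item for every $i$.

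The one point requiring care is the invocation of uniqueness of the correspondent: this is precisely where the hypotheses $4a\neq0$ and $2c\neq3a+3b$ enter, through Lemma~\ref{le:basic2R} (which, for $4a\neq0$, caps each red $2$-path at two basic $6$-cycles) and through the case analysis in the proof of Proposition~\ref{pro:VTcond} (which, for $2c\neq3a+3b$, leaves only one admissible combination). Beyond that, everything is a direct check of adjacencies and of the colour pattern on the two displayed $6$-cycles, the key identity being $d=2a+b$.
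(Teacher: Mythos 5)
Your proposal is correct and takes essentially the same route as the paper: the paper's own proof consists of reading the two basic $6$-cycles off Figure~\ref{fig:alt8_1} for the left edges, taking the correspondent of $(B_0,C_c)$ from the case analysis in the proof of Proposition~\ref{pro:VTcond} (with uniqueness coming from Lemma~\ref{le:basic2R} and the exclusion of $2c=3a+3b$, as set up in the paragraph preceding the lemma), and then translating by powers of $\rho$. You have simply made the adjacency and colour checks explicit.
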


The situation is slightly different in the case where $4a = 0$. Namely, the proof of Proposition~\ref{pro:VTcond} reveals that in this case there are two blue edges that correspond to a given blue edge in the above sense. However, in the case of $4a = 0$ each blue edge clearly lies on four basic $6$-cycle but on each of them the antipodal blue edge is the same. We can thus speak about {\em basic pairs} of blue edges (two blue edges lying on the same basic $6$-cycle). In this sense however each basic pair of blue edges {\em corresponds} in the above way to a unique basic pair of blue edges and it is also clear how to again introduce this concept for blue arcs. Since the ``basic counterpart'' of $(A_i,B_i)$ is $(A_{i+n/2},B_{i+n/2})$ and similarly for the other two types of blue edges, we simplify the notation by writing $\overline{(A_i,B_i)}$ for $\{(A_i,B_i),(A_{i+n/2},B_{i+n/2})\}$ (and similarly for the other types of blue edges). With this agreement it is now easy to see (see also Figure~\ref{fig:alt8_3}) that we get the following. 

\begin{lemma}
\label{le:aux2}
Let $\WH_n(a,b,c,d)$ be a vertex-transitive WH-graph where the parameters $n$, $a$, $b$, $c$ and $d$ are as in Proposition~\ref{pro:VT_basic}. Suppose $4a = 0$ and $4c = 4b$ but $2c \neq 3a+3b$. Then the following holds for each $i \in \ZZ_n$:
\begin{itemize}
\itemsep = 0pt
\item $\overline{(A_i,B_i)}$ corresponds to $\overline{(A_{i+a+b},C_{i+a+b})}$.
\item $\overline{(B_i,C_{i+c})}$ corresponds to $\overline{(B_{i+2c-2b}, C_{i+3c-2b})}$.
\end{itemize}
\end{lemma}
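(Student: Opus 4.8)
The plan is to run exactly the argument behind Lemma~\ref{le:aux1}, the only new ingredient being the bookkeeping forced by the hypothesis $4a=0$. Recall from the discussion preceding the lemma that $4a=0$ together with $2a\neq 0$ gives $2a=n/2$, that every blue edge then lies on four basic $6$-cycles all sharing the same antipodal blue edge, and that the notion of a basic pair of blue edges (or blue arcs) corresponding to another basic pair is well defined. Consequently it suffices to verify the two claimed correspondences for the two representative blue edges $A_aB_a$ and $B_0C_c$, and then transport the conclusions around $\ZZ_n$ using powers of $\rho$ from~\eqref{eq:rho}.

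First I would treat the blue edge $A_aB_a$ via the local picture in Figure~\ref{fig:alt8_1}, which already shows (independently of whether $4a$ vanishes) that $C_{2a+b}$ is basically $\cR$-antipodal to $A_a$, that $A_{2a+b}$ is basically $\cR$-antipodal to $B_a$, and that these two vertices are joined by the blue right edge $A_{2a+b}C_{2a+b}$. When $4a=0$, the extra basic $6$-cycles through the red $2$-paths $(A_0,A_a,A_{2a})$ and $(C_{a+b},B_a,C_{3a+b})$ contribute, as noted in the proof of Proposition~\ref{pro:VTcond} (see Figure~\ref{fig:alt8_3}), exactly the antipodes obtained by adding $n/2$ to the subscripts; in particular the blue right edge $A_{2a+b+n/2}C_{2a+b+n/2}$ also arises. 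Hence the basic pair $\overline{(A_a,B_a)}$ corresponds to $\overline{(A_{2a+b},C_{2a+b})}=\overline{(A_{a+(a+b)},C_{a+(a+b)})}$, and uniqueness of the corresponding basic pair forces this to be the required correspondent. Applying $\rho^{i-a}$, which is an automorphism and hence maps basic $6$-cycles to basic $6$-cycles and preserves the edge-orbits $\cR$ and $\cB$ and therefore the correspondence relation, gives the first bullet for every $i$.

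For the blue edge $B_0C_c$ I would argue in the same way, reading off Figure~\ref{fig:alt8_3}: the basically $\cR$-antipodal vertices of $C_c$ lie among $\{A_{c-b+n/4},A_{c-b+3n/4},B_{2c-2b},B_{2c-2b+n/2}\}$ and those of $B_0$ among $\{A_{b+n/4},A_{b+3n/4},C_{2b-c},C_{2b-c+n/2}\}$. Here the hypotheses $4c=4b$ and $2c\neq 3a+3b$ pin down the correspondent: any blue edge between the two antipode-sets having an $A$-vertex as one of its endpoints forces $2c=3a+3b$ and is thus impossible, and the remaining $B$-to-$C$ option with $4c=4b+n/2$ is excluded since $4c=4b$; what is left is the blue $c$-edge $B_{2c-2b}C_{3c-2b}$ (note $C_{3c-2b}=C_{2b-c}$, as $4c=4b$), together with its $n/2$-translate. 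Thus $\overline{(B_0,C_c)}$ corresponds to $\overline{(B_{2c-2b},C_{3c-2b})}$; because $2a=n/2$ this is literally the formula of Lemma~\ref{le:aux1} with $2c-2a-2b$ rewritten as $2c-2b-n/2$, the extra $2a$ being absorbed by the bar. Conjugating by $\rho^{i}$ then delivers the second bullet for every $i$.

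The step I expect to require the most care is the identification, in the $4a=0$ case, of all basically $\cR$-antipodal vertices of a given vertex -- there are now four, not two -- together with the verification that the extra ones are precisely the $n/2$-translates of the two appearing in the $4a\neq 0$ analysis, so that the two blue arcs corresponding to a given blue arc genuinely constitute a single basic pair matching the stated formula; once that is settled, both bullets follow by the $\rho$-transport with only routine arithmetic modulo $n$.
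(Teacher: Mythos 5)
Your argument is correct and is essentially the paper's own proof: the paper disposes of this lemma by pointing to the discussion of basic pairs in the preceding paragraph and to the case analysis of Figure~\ref{fig:alt8_3} from the proof of Proposition~\ref{pro:VTcond}, which is exactly what you spell out (identification of the four basically $\cR$-antipodal vertices as the two $n/2$-translated pairs, elimination of the $A$-involving options and of the $4c=4b+n/2$ option by the hypotheses, and transport of the representative computation by powers of $\rho$). The one imprecision---that the $A$-involving blue edges literally force $2c=3b\pm 3a$ rather than $2c=3a+3b$, the other sign being ruled out only via the implicit normalization of $a$ versus $-a$ carried out in the proof of Proposition~\ref{pro:VTcond}---is shared verbatim with the paper, so it is not a gap relative to its proof.
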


\begin{proposition}
\label{pro:VTcond3}
Let $\G = \WH_n(a,b,c,d)$ be a vertex-transitive WH-graph where the parameters $n$, $a$, $b$, $c$ and $d$ are as in Proposition~\ref{pro:VT_basic}. If $4c = 4a+4b$ but $2c \neq 3a+3b$ then $n = 4m$ for an odd integer $m$, $c = 0$ and, using an isomorphism from Lemma~\ref{le:iso} if necessary, we have that $a = 2$, $b = m-2$ and $d = m+2$. In particular, $\G$ belongs to the family of graphs from Proposition~\ref{pro:VTfam1}.
\end{proposition}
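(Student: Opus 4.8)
The plan is to exploit the $\Aut(\G)$-invariance of the \emph{correspondence} relation on blue arcs (Lemmas~\ref{le:aux1} and~\ref{le:aux2}) together with the automorphism $\eta\in\Aut(\G)$ introduced just before Proposition~\ref{pro:VTcond}, which sends the left arc $(A_a,B_a)$ to the $c$-arc $(B_0,C_c)$. Write $e=a+b$. Recall from Proposition~\ref{pro:VT_basic} that $n$ and $c$ are even, $b,d\neq 0$ and $2a=d-b$, and note that the standing hypotheses say $4(c-e)=0$ and $2c\neq 3e$. I would split the argument according to whether $4a\neq 0$ or $4a=0$; the former uses Lemma~\ref{le:aux1}, the latter Lemma~\ref{le:aux2}. (A direct check of the WH-graphs of small order, in the spirit of Observation~\ref{obs:PC}, lets us assume $n$ is not too small, which will make the second case vacuous.)

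\textbf{The case $4a\neq 0$.} Since correspondence is $\Aut(\G)$-invariant and $\eta$ maps $(A_a,B_a)$ to $(B_0,C_c)$, it must map the arc corresponding to $(A_a,B_a)$, namely $(A_{a+e},C_{a+e})=(A_d,C_d)$, to the arc corresponding to $(B_0,C_c)$, namely $(B_{2c-2e},C_{3c-2e})$; hence $A_d\eta=B_{2c-2e}$ and $C_d\eta=C_{3c-2e}$. The idea is then to iterate this bookkeeping: combining the knowledge of $\eta$ on these few arcs with the facts that $\eta$ commutes appropriately with the shift $\rho$, carries blue cycles to blue cycles isometrically, and carries the red cycles and the $\langle\rho^{2a}\rangle$-block system to themselves, one determines $\eta$ on a generating set of $V(\G)$ as an explicit affine-type rule. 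Writing out the constraint that this rule respects adjacency yields a short system of congruences in $\ZZ_n$; solving it using $4(c-e)=0$, $2a=d-b$ and $2c\neq 3e$ should force $c=0$. (For instance, tracking $\eta$ along the blue cycle through $A_a$ already shows that $A_{a+kc}\eta=B_{kc}$ for all $k$, so if $e\in\langle c\rangle$ then comparing with $A_d\eta=B_{2c-2e}$ gives $3e=2c$, contradicting $2c\neq 3e$; pushing comparisons of this type further pins down $c$.) Once $c=0$, the hypothesis gives $4e=0$; connectivity then says $n,a,b,d=2a+b$ have no common odd prime, and since $c=0$ is even this fixes the parities and yields $n=4m$ with $m$ odd. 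Finally, a unit substitution via Lemma~\ref{le:iso} normalises $a=2$, whence $b=e-a=m-2$ and $d=2a+b=m+2$. One checks $2c=0\neq 3m=3e$, so the resulting graph is exactly $\WH_{4m}(2,m-2,0,m+2)$, a member of the family of Proposition~\ref{pro:VTfam2}.

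\textbf{The case $4a=0$.} Here $2a=d-b=n/2$, so every red cycle is a $4$-cycle and one works with the $\overline{(\cdot)}$-pairs via Lemma~\ref{le:aux2}. The analogous bookkeeping gives $A_a\eta=B_0$ while $A_d\eta$ lies in the block $\{B_{2c-2b},B_{2c-2b+n/2}\}$; since $4(c-b)=4a=0$ forces $2c-2b\in\{0,n/2\}$, the vertices $B_0$ and $B_{2c-2b}$ lie on a common red $4$-cycle. As $\eta$ permutes red $4$-cycles bijectively, $A_a$ and $A_d=A_{b+n/2}$ must then already lie on a common red $4$-cycle, which forces $b\in\{n/4,n/2,3n/4\}$. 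The value $b=n/2$ gives $d=0$ and is excluded, and in the remaining subcases $b=\pm a$ a short computation together with the connectivity condition forces $n$ to be very small, so this case is subsumed by the small-order check mentioned above and produces nothing new.

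I expect the $4a=0$ case to be the main obstacle. Because the red cycles then have length only $4$ and Lemma~\ref{le:aux2} carries the $\overline{(\cdot)}$-ambiguity, the bookkeeping is much looser than when $4a\neq 0$, so one must be careful in eliminating all the degenerate configurations and in reconciling the fact that the graphs surviving there reappear, after a unit substitution, among the examples already classified (those with $2c=3a+3b$); this is why a separate treatment of the smallest orders is convenient. The bulk of the real content lies in the $4a\neq 0$ computation of the congruences forcing $c=0$.
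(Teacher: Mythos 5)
There is a genuine gap: the heart of the statement, namely that $c=0$ (and that $a$ is even), is never actually derived. In your main case ($4a\neq 0$) you assert that determining $\eta$ as an ``affine-type rule'' and ``solving a short system of congruences \emph{should} force $c=0$''; this is an expectation, not an argument, and it leans on the unjustified claim that $\eta$ ``commutes appropriately with the shift $\rho$'' -- an automorphism of a WH-graph need not normalize $\la\rho\ra$ (if it did, most of the paper's analysis would be unnecessary), and your one concrete computation only treats the special situation $a+b\in\la c\ra$. The paper's route is different and this is precisely its key idea: from the correspondence data it builds the cyclic sequence of blue arcs $(B_0,C_c)\to(B_{2c-2a-2b},A_{2c-2a-2b})\to(C_{2c-a-b},B_{c-a-b})\to(C_{a+b},A_{a+b})\to(B_0,C_c)$, which has exactly four terms because $4c=4a+4b$; since all blue arcs are $\Aut(\G)$-equivalent, the analogous sequence through $(A_0,B_0)$, namely $(A_{k(a+b)},B_{k(a+b)})$, must also have four terms, so $a+b$ has order $4$ in $\ZZ_n$ and hence $4\mid n$ and $4c=0$. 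Nothing in your sketch produces this. Moreover, even granting $4(a+b)=0$ and $c\in\{0,n/4,n/2,3n/4\}$, your claim that ``once $c=0$, connectivity fixes the parities'' is false as a deduction: connectivity does not exclude $a$ odd (e.g.\ the parameters $n=8$, $a=b=1$, $c=0$, $d=3$ are connected), and one first needs $a$ even and $b$ odd to get $n\equiv 4\pmod 8$ and then, via the $2$-part condition of Proposition~\ref{pro:VT_basic}, $c=0$. The paper eliminates the case $a$ odd by a separate, nontrivial argument (normalize $a=1$, map a long mostly-red closed walk, force $n=8$ and $2c=4$, then obtain a contradiction by counting alternating $4$-cycles through blue edges); this step has no counterpart in your proposal.

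Two further points. First, your appeal to ``a direct check of small orders in the spirit of Observation~\ref{obs:PC}'' is not available: that observation concerns edge-transitive WH-graphs, no vertex-transitive analogue is stated in the paper, and in any case the proposition must be \emph{verified} at small $n$, not merely have those orders discarded. Second, your $4a=0$ branch is closer to being salvageable than you suggest: the reduction $b\in\{\pm a,\,n/2\}$ together with connectivity and $4c=4a+4b=0$ really does force $n=4$, but those $n=4$ configurations (which, as you note, coincide after relabelling with graphs satisfying $2c=3a+3b$) still have to be settled explicitly rather than waved off; the paper avoids this bookkeeping altogether by running the order-$4$ argument uniformly for basic pairs of blue arcs via Lemma~\ref{le:aux2}.
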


\begin{proof}
As was explained in the paragraphs preceding Lemma~\ref{le:aux1} and Lemma~\ref{le:aux2} we have the notion of corresponding pairs of blue arcs (if $4a \neq 0$) or corresponding pairs of basic pairs of blue arcs. We claim that $a+b$ is of order $4$ (in $\ZZ_n$). We describe how to see this in the case that $4a \neq 0$ but a completely analogous argument works also in the case of $4a = 0$, where one needs to work with correspondence between basic pairs of blue arcs instead of correspondence between blue arcs. Suppose then that $4a \neq 0$ and let us consider the following. We start at a given blue arc $(u,v)$ and then inductively construct a sequence of blue arcs as follows. Let $(u',v')$ be the corresponding blue arc of $(u,v)$. There is then a unique blue arc of the form $(u',w)$, where $w \neq v'$. The blue arc in our sequence that follows $(u,v)$ is then set to be $(u',w)$. Now, using Lemma~\ref{le:aux1} and taking into account the assumption $4c = 4a+4b$ we find that starting with the initial blue arc $(B_0,C_c)$ we get the sequence
$$
	(B_0,C_c) \to (B_{2c-2a-2b},A_{2c-2a-2b}) \to (C_{2c-a-b},B_{c-a-b}) \to (C_{a+b},A_{a+b}) \to (B_0,C_c) \to \cdots
$$
which thus has precisely four distinct elements. As $\cB$ is an $\Aut(\G)$-orbit and we have the automorphism $\tau$ from~\eqref{eq:tau}, the same must hold for the sequence starting with the blue arc $(A_0,B_0)$. Here we get the sequence
$$
	(A_0,B_0) \to (A_{a+b},B_{a+b}) \to (A_{2a+2b}, B_{2a+2b}) \to \cdots,
$$
thus showing that $a+b$ is of order $4$ in $\ZZ_n$, as claimed. 

This implies that $n$ is divisible by $4$ and $4c = 0$. Moreover, no odd prime divisor of $n$ (which of course divides $c$) can divide any of $a$ and $b$ since otherwise it divides both (and thus also $d = 2a+b$), contradicting the assumption that $\G$ is connected. Moreover, since $c$ is even at least one of $a$ and $b$ is odd, and so is coprime to $n$. Now, suppose $b$ is even. Then $a$ is odd and thus coprime to $n$, and so $0 \in b + \la 2a \ra$, contradicting~\eqref{eq:int2}. Therefore, $b$ is odd and thus coprime to $n$. 

If $a$ is odd, then Lemma~\ref{le:iso} implies that we can assume $a = 1$, and so $a+b$ being even and of order $4$ forces $n$ to be divisible by $8$ and $b \in \{n/4-1, 3n/4-1\}$. Using Lemma~\ref{le:iso} with $q = -1$ if necessary we can thus assume that $\{b,d\} = \{n/4 - 1, n/4 + 1\}$. Now, $4c = 0$ implies that $c \in \{0,n/4,n/2,3n/4\}$. Consider the closed walk $(A_0,A_1,A_2,\ldots , A_{n/4-1},C_{n/4-1},B_0,A_0)$ of length $n/4+2$ and note that its first $n/4-1$ edges are red, which are then followed by a blue, a red and another blue edge. Vertex-transitivity and the fact that we have the automorphism $\tau$ imply that there is an automorphism $\eta \in \Aut(\G)$ mapping this walk to a walk starting with the red $2$-path $(B_0,C_{n/4+1},B_2)$. Then $A_{n/4-2}\eta = B_{n/4-2}$ (recall that $n/4$ is even) and thus $A_{n/4-1}\eta = C_{n/2-1}$. Clearly, if $\eta$ maps at least one of the two blue edges of the above walk to a left or a right edge, none of them can be mapped to a $c$-edge. But as $A_0$ and $A_{n/2-1}$ are not adjacent, this shows that $\eta$ maps both blue edges of the walk to $c$-edges, and so $C_{c}$ and $B_{n/2-1-c}$ are adjacent via a red edge, that is $2c+1+n/2 \in \{n/4-1,n/4+1\}$. Since $2c \in \{0,n/2\}$, this is only possible if $n = 8$ and $2c = 4$. But then the blue edge $B_0C_c$ lies on two alternating $4$-cycles, while the blue edge $A_0B_0$ lies on just one, a contradiction. 

This finally shows that $b$ is odd and $a$ is even. Since $4(a+b) = 0$ and $a+b$ is odd, $n \equiv 4 \pmod{8}$. Since the $2$-part of $\gcd(a,n)$ is at least $2$ and the $2$-part of $n$ is $4$, Proposition~\ref{pro:VT_basic} and $4c = 0$ imply that in fact $c = 0$ and the $2$-part of $\gcd(a,n)$ is $2$. In view of Lemma~\ref{le:iso} we can thus assume that $a = 2$. As $a+b$ is of order $4$ in $\ZZ_n$, we must have that $b \in \{n/4-2, 3n/4-2\}$. If $b = n/4-2$ then $d = b+2a = n/4+2$. If however $b = 3n/4-2$, then $d = 3n/4+2$ and using Lemma~\ref{le:iso} with $q = -1$ shows that we can in fact assume $\{b,d\} = \{n/4-2, n/4+2\}$, as claimed.
\end{proof}

This finally proves the main result of this section.

\begin{theorem}
\label{the:mainVT}
Let $\G = \WH_n(a,b,c,d)$ be a WH-graph. Then $\G$ is vertex-transitive if and only if $n$ is even and after possibly changing the roles of $b$, $c$ and $d$, possibly replacing $a$ by $-a$ and possibly using an isomorphism from Lemma~\ref{le:iso} we have that $d = 2a+b$ and one of the following holds:
\begin{enumerate}
\itemsep = 0pt
\item[{\rm (1)}] $\G$ is one of $\WH_8(2,1,0,5)$ and $\WH_8(2,1,4,5)$, or
\item[{\rm (2)}] $a$, $b$ and $d$ are all odd, $c$ is even and $2c = 3a+3b$, or
\item[{\rm (3)}] $n = 4m$ with $m > 1$ odd and $\G = \WH_{4m}(2,m-2,0,m+2)$.
\end{enumerate}
\end{theorem}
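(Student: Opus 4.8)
The plan is to assemble the results already proved in this section, so the argument is short. For the ``if'' direction, observe that the two graphs in~(1) were seen to be vertex-transitive at the beginning of the section (see Figure~\ref{fig:sporadic}); a graph satisfying~(2) is, after applying Lemma~\ref{le:iso} with $q=-1$ if needed to bring $a$ into the range $1\le a<n/2$, exactly one of the graphs covered by Proposition~\ref{pro:VTfam1}; and a graph satisfying~(3) is one of the graphs of Proposition~\ref{pro:VTfam2}. Since vertex-transitivity is an isomorphism invariant and the operations permitted in the statement (permuting $b,c,d$, negating $a$, and applying Lemma~\ref{le:iso}) all produce isomorphic WH-graphs, this settles one direction.

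For the ``only if'' direction, assume $\G$ is vertex-transitive. First apply Proposition~\ref{pro:VT_basic}: after permuting $b,c,d$ we may assume that $n$ is even, that $b$ and $d$ are both nonzero, that $c$ is even, and that $d=2a+b$. Proposition~\ref{pro:VTcond} then leaves exactly three possibilities: (i) $2c=3a+3b$; (ii) $4a=0$ and $4c=4b+n/2$; (iii) $4c=4a+4b$.

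In case~(i), Proposition~\ref{pro:VTcond1} gives that $a$, $b$ and $d$ are all odd, which together with $c$ even and $2c=3a+3b$ is precisely outcome~(2). In case~(ii), Proposition~\ref{pro:VTcond2} forces $n=8$ and $c\in\{0,4\}$ and, after exchanging $\{b,d\}$ with $\{-b,-d\}$ if necessary, $\{b,d\}=\{1,5\}$; since $4a=0$ and $2a\neq0$ with $n=8$ give $a\in\{2,6\}$, negating $a$ if necessary we may take $a=2$, whence $d=2a+b$ forces $b=1$ and $d=5$, so $\G\in\{\WH_8(2,1,0,5),\WH_8(2,1,4,5)\}$, which is outcome~(1). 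In case~(iii), if in addition $2c=3a+3b$ we are back in case~(i) and again obtain outcome~(2); otherwise $2c\neq 3a+3b$ and Proposition~\ref{pro:VTcond3} gives $n=4m$ with $m>1$ odd, $c=0$, and, after an isomorphism from Lemma~\ref{le:iso}, $a=2$, $b=m-2$, $d=m+2$, which is outcome~(3). This exhausts all possibilities.

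The genuine mathematical content here lies entirely in the preceding propositions, so no new obstacle arises; the only thing that needs care is the clerical bookkeeping of the various normalizations invoked along the way — permuting $b,c,d$ when applying Proposition~\ref{pro:VT_basic}, negating $a$ and exchanging $\{b,d\}$ with $\{-b,-d\}$ when applying Proposition~\ref{pro:VTcond2}, the isomorphism from Lemma~\ref{le:iso} when applying Proposition~\ref{pro:VTcond3}, and a further application of Lemma~\ref{le:iso} to place the parameters of outcome~(2) in the range demanded by Proposition~\ref{pro:VTfam1} — making sure that these can be performed consistently, and checking that in case~(ii) the parameter $a$ is indeed determined once $n$, $b$ and $d$ are fixed.
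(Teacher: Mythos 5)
Your proposal is correct and matches the paper's own treatment: the theorem is obtained exactly by assembling Propositions~\ref{pro:VTfam1}, \ref{pro:VTfam2}, \ref{pro:VT_basic}, \ref{pro:VTcond}, \ref{pro:VTcond1}, \ref{pro:VTcond2} and \ref{pro:VTcond3} together with the two sporadic graphs, with the same case split (and the same observation that the case $4c=4a+4b$ with $2c=3a+3b$ folds back into outcome~(2)). The only nitpick is that in case~(ii) the relation $d=2a+b$ does not literally ``force'' $b=1$, $d=5$ (the assignment $b=5$, $d=1$ also satisfies it since $2a=4$ in $\ZZ_8$), but this is immaterial because the graph depends only on the set $\{b,c,d\}$, so the conclusion $\G\in\{\WH_8(2,1,0,5),\WH_8(2,1,4,5)\}$ stands.
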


We remark that the smallest two members of the family from item (2) of the above theorem are the nonisomorphic graphs $\WH_4(1,3,0,1)$ and $\WH_4(1,3,2,1)$, while the smallest member of the family from item (3) of Theorem~\ref{the:mainVT} is the graph $\WH_{12}(2,1,0,5)$.

\section{Concluding remarks}
\label{sec:concluding}

As was mentioned in the Introduction, tetravalent vertex-transitive but not arc-transitive graphs can be useful when constructing tetravalent semisymmetric graphs. In particular,~\cite[Theorem~5.2]{PotWil14} states that each worthy (no two vertices have the same set of neighbors) tetravalent semisymmetric graph of girth $4$ arises from what is called a ``suitable LR-structure'' (see~\cite{PotWil14} for the definition). Not to add too much to the length of this paper, let us simply say that the results of the previous sections imply that vertex-transitive WH-graphs are very good candidates for suitable LR-structures. In particular, Lemma~\ref{le:sym} and Theorem~\ref{the:ET} imply that a vertex-transitive WH-graph together with the natural coloring of its edges, described in Section~\ref{sec:VT}, giving rise to the red and blue cycles, is a suitable LR-structure if and only if it has no alternating $4$-cycles and for any of its vertices there exists an automorphism fixing this vertex and two of its neighbors while interchanging the other two neighbors. 

It is easy to see that, of the WH-graphs appearing in Theorem~\ref{the:mainVT}, precisely those from item (2) with the extra condition $a \in \{\pm b, \pm d\}$ have alternating $4$-cycles. Moreover, one can verify that the two sporadic examples $\WH_8(2,1,0,5)$ and $\WH_8(2,1,4,5)$ do admit automorphisms fixing a vertex and two of its neighbors while swapping the remaining two neighbors. Moreover, as the automorphism $\theta$ from the proof of Proposition~\ref{pro:VTfam2} fixes the vertex $C_0$ and each of $B_{-m-2}$ and $B_{-m+2}$ while swapping $B_0$ with $A_0$, the only vertex-transitive WH-graphs that might not give rise to suitable LR-structures, are the ones from item (2) of Theorem~\ref{the:mainVT} for which either $a \in \{\pm b, \pm d\}$ or the only nontrivial automorphism fixing the vertex $A_0$ is the automorphism $\tau$ from Lemma~\ref{le:sym}. 

It is easy to see that if there is a $q \in \ZZ_n$ such that $qa = -a$, $qc = c$ and $qb = d$ (in which case $d = 2a+b$ implies $qd = b$), then the permutation mapping each $A_i$ to $A_{qi}$, $B_i$ to $B_{qi}$ and $C_i$ to $C_{qi}$ is an automorphism of $\WH_n(a,b,c,d)$ fixing $A_0$, $B_0$ and $C_0$ but swapping $A_a$ with $A_{-a}$. Thus all WH-graphs admitting such a $q \in \ZZ_n$ are suitable LR-structures (provided that $a \notin \{\pm b, \pm d\}$). We leave it as an open problem to determine if there are any other LR-structures among the graphs from item (2) of Theorem~\ref{the:mainVT} (that is those that do not admit a ``suitable'' $q$).

\begin{problem}
Determine whether there exist graphs from the second item of Theorem~\ref{the:mainVT} for which $a \notin \{\pm b, \pm d\}$, there is no $q \in \ZZ_n$ with $qa = -a$, $qc = c$ and $qb = d$, but they do admit an automorphism fixing $A_0$, $B_0$ and $C_0$ while swapping $A_a$ and $A_{-a}$. In the case they do, determine all such examples.
\end{problem}

Finally, there is the natural question of which pairs of vertex-transitive WH-graphs are isomorphic. Up to isomorphism, the only three members of the family from item (2) of Theorem~\ref{the:mainVT} of order $24$ are $\WH_8(1,3,2,5)$, $\WH_8(1,7,0,1)$ and $\WH_8(1,7,4,1)$, none of which is isomorphic to any of the two sporadic graphs from item (i) of Theorem~\ref{the:mainVT}. Suppose $\G_1 = \WH_n(a,b,c,d)$ and $\G_2 = \WH_{4m}(2,m-2,0,m+2)$ are isomorphic graphs from items (2) and (3) of Theorem~\ref{the:mainVT}, respectively. Of course, $n = 4m$ has to hold. The blue cycles in $\G_2$ are $3$-cycles, while the red cycles of $\G_1$ are of even length (recall that $a$ is odd). Therefore, a corresponding isomorphism maps the red cycles of $\G_1$ to the red cycles of $\G_2$. But since $m$ is odd and the red cycles of $\G_2$ are of length $2m$, while those of $\G_1$ are of length $4m/\gcd(a,4m)$, which is divisible by $4$, this is not possible. This contradiction shows that the only possible isomorphisms are among the graphs from item (2) of Theorem~\ref{the:mainVT}. Of course, by Lemma~\ref{le:iso} it suffices to consider the examples with $a$ dividing $n$. This brings us to our second open problem.

\begin{problem}
Determine a necessary and sufficient condition for two graphs $\WH_n(a,b,c,d)$ and $\WH_n(a,b',c',d')$ with $a$ dividing $n$, both satisfying the conditions from item (2) of Theorem~\ref{the:mainVT}, to be isomorphic.
\end{problem}

\section*{Acknowledgments}

The authors are grateful to Primo\v z Poto\v cnik for fruitful conversations on the topic. They would also like to thank the organizers of the 2017 CMO workshop {\em Symmetries of Discrete Structures in Geometry} held in Oaxaca, Mexico, the 2018 and 2022 {\em SIGMAP} conferences held in Morelia, Mexico, and Fairbanks, USA, respectively, and the 2022 {\em Workshop on Symmetries of Graphs}, held in Kranjska Gora, Slovenia, during which a considerable part of the research that lead to the results of this paper was performed.

P. \v Sparl acknowledges financial support by the Slovenian Research Agency (research program P1-0285 and research projects J1-2451 and J1-3001).



\begin{thebibliography}{}
\bibitem{AlsMarNow94} B.~Alspach, D.~Maru\v si\v c, L.~Nowitz, 
	Constructing graphs which are 1/2-transitive, 
	{\em J. Aust. Math. Soc. A} {\bf 56} (1994), no. 3, 391--402, 
	\href{https://doi.org/10.1017/S1446788700035564}{https://doi.org/10.1017/S1446788700035564}.
\bibitem{AntHujKut15} I.~Anton\v ci\v c, A.~Hujdurovi\'c, K.~Kutnar,
	A classification of pentavalent arc-transitive bicirculants,
	{\em J. Algebraic Combin.} {\bf 41} (2015), no. 3, 643--668, 
	\href{https://doi.org/10.1007/s10801-014-0548-z}{https://doi.org/10.1007/s10801-014-0548-z}.
\bibitem{Mag} W.~Bosma, J.~Cannon, C.~Playoust,
	The Magma algebra system. I. The user language,
	{\em J. Symbolic Comput.} {\bf 24} (1997), no. 3-4, 235--265, 
	\href{https://doi.org/10.1006/jsco.1996.0125}{https://doi.org/10.1006/jsco.1996.0125}.
\bibitem{Doy76} P.~G.~Doyle, 
	On transitive graphs, Senior thesis, Harvard College, 1976.
\bibitem{FreKut13} B.~Frelih, K.~Kutnar,
	Classification of cubic symmetric tetracirculants and pentacirculants,
	{\em European J. Combin.} {\bf 34} (2013), no. 2, 169--194, 
	\href{https://doi.org/10.1016/j.ejc.2012.08.005}{https://doi.org/10.1016/j.ejc.2012.08.005}.
\bibitem{GroTuc87} J.~L.~Gross, T.~W.~Tucker, 
	Topological Graph Theory, John Wiley and Sons, 1987.
\bibitem{HerKap01} M.~Herzog, G.~Kaplan, 
	Large cyclic subgroups contain non-trivial normal subgroups, 
	{\em J. Group Theory} {\bf 4} (2001), no. 3, 247--253, 
	\href{https://doi.org/10.1515/jgth.2001.022}{https://doi.org/10.1515/jgth.2001.022}.
\bibitem{Hol81} D.~F.~Holt, 
	A graph which is edge transitive but not arc transitive, 
	{\em J. Graph Theory} {\bf 5} (1981), no. 2, 201--204, 
	\href{https://doi.org/10.1002/jgt.3190050210}{https://doi.org/10.1002/jgt.3190050210}.
\bibitem{Isaacs_book} I.~M.~Isaacs, 
	Finite group theory, Graduate Studies in Mathematics, vol. 92, 
	American Mathematical Society, 2008.
\bibitem{HubRamSpa21} I.~Hubard, A.~Ramos-Rivera, P.~\v Sparl, 
	Arc-transitive maps with underlying rose window graphs,
	{\em J. Graph Theory} {\bf 96} (2021), no. 2, 203--230, 
	\href{https://doi.org/10.1002/jgt.22608}{https://doi.org/10.1002/jgt.22608}.
\bibitem{KovKuzMalWil12} I.~Kov\'acs, B.~Kuzman, A.~Malni\v c, S.~Wilson,
	Characterization of edge-transitive $4$-valent bicirculants,
	{\em J. Graph Theory} {\bf 69} (2012), no. 4, 441--463, 
	\href{https://doi.org/10.1002/jgt.20594}{https://doi.org/10.1002/jgt.20594}.
\bibitem{KovKutMar10} I.~Kov\'acs, K.~Kutnar, D.~Maru\v si\v c,
		Classification of edge-transitive rose window graphs,
		{\em J. Graph Theory} {\bf 65} (2010), no. 3, 216--231, 
		\href{https://doi.org/10.1002/jgt.20475}{https://doi.org/10.1002/jgt.20475}.
\bibitem{KovKutMarWil12} I.~Kov\'acs, K.~Kutnar, D.~Maru\v si\v c, S.~Wilson,
		Classification of cubic symmetric tricirculants,
		{\em Electron. J. Combin.} {\bf 19} (2012), no. 2, Paper 24,
		\href{https://doi.org/10.37236/2371}{https://doi.org/10.37236/2371}.
\bibitem{KovMarMuz13} I.~Kov\'acs, D.~Maru\v si\v c, M.~Muzychuk,
	On $G$-arc-regular dihedrants and regular dihedral maps,
	{\em J. Algebraic Combin.} {\bf 38} (2013), no. 2, 437--455,
	\href{https://doi.org/10.1007/s10801-012-0410-0}{https://doi.org/10.1007/s10801-012-0410-0}.
\bibitem{MalNedSko00} A.~Malni\v c, R.~Nedela, M.~\v Skoviera, 
		Lifting graph automorphisms by voltage assignments, 
		{\em Eur. J. Comb.} {\bf 21} (2000), no. 7, 927--947,
		\href{https://doi.org/10.1006/eujc.2000.0390}{https://doi.org/10.1006/eujc.2000.0390}.
\bibitem{Mar98} D.~Maru\v si\v c, 
		Half-transitive group actions on finite graphs of valency 4, 
		{\em J. Comb. Theory, Ser. B} {73} (1998), no. 1, 41--76,
		\href{https://doi.org/10.1006/jctb.1997.1807}{https://doi.org/10.1006/jctb.1997.1807}.
\bibitem{MarSpa08} D.~Maru\v si\v c, P.~\v Sparl,
		On quartic half-arc-transitive metacirculants, 
		{\em J. Algebr. Comb.} {28} (2008), no. 3, 365--395,
		\href{https://doi.org/10.1007/s10801-007-0107-y}{https://doi.org/10.1007/s10801-007-0107-y}.
\bibitem{MikPotWil08} \v S.~Miklavi\v c, P.~Poto\v cnik, S.~Wilson,
	Arc-transitive cycle decompositions of tetravalent graphs,
	{\em J. Combin. Theory Ser. B} {\bf 98} (2008), no. 6, 1181--1192,
	\href{https://doi.org/10.1016/j.jctb.2008.01.005}{https://doi.org/10.1016/j.jctb.2008.01.005}.
\bibitem{PotTol20} P.~Poto\v cnik, M.~Toledo,
		Classification of cubic vertex-transitive tricirculants,
		{\em Ars Math. Contemp.} {\bf 18} (2020), no. 1, 1--31,
		\href{https://doi.org/10.26493/1855-3974.1815.b52}{https://doi.org/10.26493/1855-3974.1815.b52}.	
\bibitem{PotWil07} P.~Poto\v cnik, S.~Wilson,
		Tetravalent edge-transitive graphs of girth at most 4,
		{\em J. Combin. Theory Ser. B} {\bf 97} (2007), no. 2, 217--236,
		\href{https://doi.org/10.1016/j.jctb.2006.03.007}{https://doi.org/10.1016/j.jctb.2006.03.007}.
\bibitem{PotWil14} P.~Poto\v cnik, S.~Wilson,
		Linking rings structures and tetravalent semisymmetric graphs,
		{\em Ars Math. Contemp.} {\bf 7} (2014), no. 2, 341--352,
		\href{https://doi.org/10.26493/1855-3974.311.4a8}{https://doi.org/10.26493/1855-3974.311.4a8}.
\bibitem{PotWil16} P.~Poto\v cnik, S.~Wilson,
		Linking rings structures and semisymmetric graphs: Cayley constructions,
		{\em European J. Combin.} {\bf 51} (2016), 84--98,
		\href{https://doi.org/10.1016/j.ejc.2015.05.004}{https://doi.org/10.1016/j.ejc.2015.05.004}.
\bibitem{PotWil18} P.~Poto\v cnik, S.~Wilson,
		Linking rings structures and semisymmetric graphs: combinatorial constructions,
		{\em Ars Math. Contemp.} {\bf 15} (2018), no. 1, 1--17,
		\href{https://doi.org/10.26493/1855-3974.1007.3ec}{https://doi.org/10.26493/1855-3974.1007.3ec}.	
\bibitem{PotWil20} P.~Poto\v cnik, S.~Wilson,
		Recipes for edge-transitive tetravalent graphs, 
		{\em Art Discrete Appl. Math.} {\bf 3} (2020), no. 1, \#P1.08,
		\href{https://doi.org/10.26493/2590-9770.1269.732}{https://doi.org/10.26493/2590-9770.1269.732}.
\bibitem{Spa09} P.~\v Sparl, 
		On the classification of quartic half-arc-transitive metacirculants, 
		{\em Discrete Math.} {\bf 309} (2009), no. 8, 2271--2283,
		\href{https://doi.org/10.1016/j.disc.2008.05.006}{https://doi.org/10.1016/j.disc.2008.05.006}.
\bibitem{Ste16} M.~C.~Sterns,
		Classification of edge-transitive propeller graphs,
		{\em arXiv}, arXiv:1510.08491v2, 29p,
		\href{https://doi.org/10.48550/arXiv.1510.08491}{https://doi.org/10.48550/arXiv.1510.08491}.
\bibitem{Wil08} S.~Wilson, 
		Rose Window Graphs,
		{\em Ars Math. Contemp.} {\bf 1} (2008), no. 1, 7--19,
		\href{https://doi.org/10.26493/1855-3974.13.5bb}{https://doi.org/10.26493/1855-3974.13.5bb}.
\end{thebibliography}
\end{document}